\theoremstyle{plain}
\newtheorem{theorem}{Theorem}[section]
\newtheorem{proposition}[theorem]{Proposition}
\newtheorem{lemma}[theorem]{Lemma}
\newtheorem{corollary}[theorem]{Corollary}
\newtheorem{claim}[theorem]{Claim}
\theoremstyle{definition}
\newtheorem{definition}[theorem]{Definition}
\newtheorem{example}[theorem]{Example}
\theoremstyle{remark}
\newtheorem{remark}[theorem]{Remark}
\numberwithin{equation}{section}
\renewcommand{\eqref}[1]{(\ref{#1})}
\newcommand{\field}[1]{\mathbb{#1}}
\newcommand{\C}{\field{C}}
\newcommand{\id}{{\mathrm{id}}}
\newcommand{\setH}{{Set_H}}
\newcommand{\quivH}{{Quiv_H}}
\newcommand{\shikaku}[2]{{\underset{#2}{\overset{#1}{\square}}}}
\begin{document}
\title{Dynamical Reflection Maps}
\author{Ryosuke ASHIKAGA\thanks{Department of Mathematics,
Graduate School of Science, 
Hokkaido University, Sapporo 0600810, JAPAN; 
ashikaga@math.sci.hokudai.ac.jp}
\ 
and Youichi SHIBUKAWA\thanks{Department of Mathematics,
Faculty of Science, 
Hokkaido University, Sapporo 0600810, JAPAN; 
shibu@math.sci.hokudai.ac.jp}}
\maketitle
\begin{abstract}
In this paper,
by making use of category theory,
we construct dynamical reflection maps,
solutions to a version of the reflection equation
associated with suitable dynamical Yang-Baxter maps,
set-theoretic solutions to the braid relation that is
equivalent to a version of the quantum Yang-Baxter equation.
Quiver-theoretic solutions
to the reflection equation
are also discussed.
\footnote[0]{Keywords:
Dynamical reflection maps; Dynamical Yang-Baxter maps; 
Reflection equation; Braid relation; Quantum Yang-Baxter equation;
Quiver theoretic solutions; Skew left braces}
\footnote[0]{MSC2010: Primary 
16D90, 16T25; Secondary 18D10, 20N05, 20N10, 81R12}
\end{abstract}
\section{Introduction}
The quantum Yang-Baxter equation \cite{baxter1,baxter2,yang}
has been studied
intensively
in mathematics and physics.
Much research for finding solutions to this equation
gave birth to the quantum groups
\cite{drinfeld2,jimbo},
examples of non-commutative and non-cocommutative Hopf algebras.

The quantum Yang-Baxter equation 
is defined on the tensor product $V\otimes V\otimes V$ 
of a vector space $V$.
Instead of the tensor products,
Drinfel'd \cite[Section 9]{drinfeld} proposed to study
the quantum Yang-Baxter equation
on the Cartesian product $S\times S\times S$
of a set $S$,
whose solution is
called a Yang-Baxter map
\cite{etingof,lu,veselov,weinstein}.
This Yang-Baxter map
plays an important role in 
discrete integrable systems
\cite{adler}.

The quantum Yang-Baxter equation 
is exactly 
the braid relation \eqref{eq:braidrel}
in the tensor category consisting of 
$\C$-vector spaces,
and,
from this viewpoint,
the Yang-Baxter map is regarded as a solution to the braid relation
in the tensor category consisting of sets.
We can consider
the braid relation
in another tensor category $\setH$
\cite{shibukawa,shibu3},
which is embedded into the
tensor category $\quivH$ consisting of
quivers over  a nonempty set $H$
(For $\setH$ and $\quivH$, see Sections \ref{section:setH}
and \ref{section:quivertheoretic} respectively).

Suitable ternary operations
\cite{kamiya1,kamiya2,shibu2},
dynamical braces
\cite{matsumoto3},
and
braided semigroups
\cite{matsumoto}
produce dynamical Yang-Baxter maps
\cite{shibu1,shibu2},
solutions to the braid relation in $\setH$.
This dynamical Yang-Baxter map is useful in
discrete integrable systems
\cite{kouloukas}.
Moreover,
by means of the dynamical Yang-Baxter map,
we can construct
quiver-theoretic solutions to the braid relation
\cite{matsumoto2}
and
Hopf algebroids
\cite{otsuto,shibu4},
which give birth to rigid tensor categories
consisting of finite-dimensional dynamical representations \cite{shibu4}.

We will try to investigate
so-called dynamical reflection equation algebras
\cite{kulish}
associated with the dynamical Yang-Baxter maps.
As a first step, 
in order to confirm that this dynamical reflection equation algebra is rich in representations,
this paper focuses on the reflection equation in
\cite[Eq.(10)]{donin}.
A set-theoretic version of this reflection equation is considered in
\cite{decommer},
which clarifies a way to construct reflection maps, 
(set-theoretic) solutions to this reflection equation,
by means of actions of skew left braces
(For skew left braces, see Definition \ref{def:skewleftbrace}).

The aim of this paper is to construct
dynamical reflection maps,
solutions to the reflection equation
\eqref{eq:RE}
in the tensor category $\setH$,
associated with
the dynamical Yang-Baxter map.

Let $L$ be a left quasigroup with a unit
(Definition \ref{def:leftquasig}),
and let $G$ be a group isomorphic to $L$ as sets.
We set $H=L$.
This left quasigroup $L$ with a unit can give birth to 
a monoid $L=(L, m, \eta)$ in $\setH$
(See Definition \ref{def:monoid}).
Let
$\sigma: L\otimes L\to L\otimes L$
be a dynamical Yang-Baxter map \eqref{eq:sigma}
defined by
the ternary operation $\mu_1^G$
\eqref{eq:mu1G}
related to the group $G$.
By means of a left $(L, m, \eta)$-module $(X, m_X)$ in $\setH$
and
a family of homomorphisms of the group $G$,
we can produce
a dynamical reflection map 
$k: L\otimes X\to L\otimes X$ 
associated with
the dynamical Yang-Baxter map
$\sigma: L\otimes L\to L\otimes L$
(See Theorem \ref{theorem:grouphomo1}
and the end of Section \ref{section:construction}).
As an application,
we construct quiver-theoretic solutions to the reflection equation
from the dynamical reflection maps
(Proposition \ref{prop:quiverrefl}).

The organization of this paper is as follows. 

Section \ref{section:setH}
contains a brief summary of
the tensor category $Set_H$,
in which we will consider the braid relation and the reflection equation. 
In Section \ref{section:reflection},
we present a way to
construct dynamical reflection maps,
by generalizing the method in \cite{decommer}
from the viewpoint of category theory.

We introduce the notions of monoids in Section \ref{section:braided}
and their left modules in Section \ref{section:leftmod},
which play a key role in constructing dynamical reflection maps
in this paper.
In particular,
braided monoids
in Definition \ref{def:braided}
and
braid-commuting pairs of left modules
in Definition \ref{def:braid-commute}
are essential.
The relations in Section \ref{section:reflection}
imply
these notions
(See Remarks \ref{rem:braidcommmYmYtriv}
and \ref{rem:braidcommmYmYsigma} also).
The tensor category $\setH$ has an advantage
in
constructing
desirable braided monoids
(See
Proposition \ref{prop:braidedmonoid}).

Our main result,
Theorem \ref{theorem:grouphomo1},
is stated in
Section \ref{section:construction}
and
proved in Section \ref{section:proof}.
By means of this theorem,
suitable left modules 
and
group homomorphisms can
produce the dynamical reflection maps.

In Section \ref{section:examples},
we provide several examples.
The end of this section
deals with reflection maps also.
In Section \ref{section:quivertheoretic}, the final section,
we prove that the dynamical reflection maps
give birth to quiver-theoretic solutions to the reflection equation,
following \cite{matsumoto2}.
\section{Tensor category $\setH$}
\label{section:setH}
This section contains a brief summary of 
the tensor category 
$\setH$ 
\cite{shibukawa,shibu3},
which
plays an important role in this paper.
About the tensor categories,
we follow the definition and the notation of \cite[Chapter XI]{kassel}
throughout the paper.

\begin{definition}\label{def:tensorcat}
A tensor category $(\mathcal{C}, \otimes, I, a, l, r)$ is a category
$\mathcal{C}$,
together with:
\begin{enumerate}
\item
a functor $\otimes: \mathcal{C}\times\mathcal{C}\to\mathcal{C}$
called a tensor product;
\item
a natural isomorphism
$a: \otimes(\otimes\times\id)\to
\otimes(\id\times\otimes)$
called an associativity constraint;
\item
an object $I$ called a unit;
\item
a natural isomorphim
$l: \otimes(I\times\id)\to\id$
called a left unit constraint with respect to the unit $I$;
\item
$r: \otimes(\id\times I)\to\id$
called a right unit constraint with respect to the unit $I$,
\end{enumerate}
satisfying
the pentagon axiom and the triangle axiom
(See \cite[Definition XI.2.1]{kassel}):
for $U, V, W, X\in \mathcal{C}$;
\begin{align}
&(1_W\otimes a_{VWX})\circ a_{UV\otimes WX}
\circ(a_{UVW}\otimes 1_X)
=a_{UVW\otimes X}\circ a_{U\otimes VWX};
\label{eq:pentagon}
\\
&(1_V\otimes l_W)\circ a_{VIW}=r_V\otimes 1_W.
\label{eq:triangle}
\end{align}
\end{definition}
We now recall the definition of $\setH$.
Let 
$H$
be a nonempty set.
$(X, \cdot_{X})$
is an object of the category
$\setH$,
iff
$X$
is a set and
$\cdot_{X}$
is a map from
$H \times X$
to
$H$.
For
$(X, \cdot_{X}), (Y, \cdot_{Y})\in\setH$,
we call 
$f: (X, \cdot_{X}) \to (Y, \cdot_{Y})$
a morphism of
$\setH$, 
iff 
$f$
is a map from 
$H$
to
$Map(X, Y)$
satisfying
\[
\lambda \cdot_{Y} f (\lambda)(x) = \lambda \cdot_{X} x
\quad(\forall\lambda \in H, \forall x \in X).
\] 
Here,
we denote by
$Map(X, Y)$
the set of all maps from $X$ to $Y$.
For morphisms $f: (X, \cdot_{X}) \to (Y, \cdot_{Y})$
and
$g: (Y, \cdot_{Y}) \to (Z, \cdot_{Z})$ of $\setH$,
we define the composition
$g \circ f: (X, \cdot_{X}) \to (Z, \cdot_{Z})$ by 
$(g \circ f) (\lambda) = g (\lambda) \circ f (\lambda)$
$(\lambda \in H )$.

Then $\setH$ is a category with the following identity $1_{(X, \cdot_X)}:
(X, \cdot_X)\to(X, \cdot_X)$:
For 
$(X, \cdot_X)\in\setH$,
the map
$1_{(X, \cdot_X)}: H \to Map(X, X)$
is defined by
$1_{(X, \cdot_X)}(\lambda)(x) = x$
$(\lambda \in H, x \in X)$.

The next task is to show that
$\setH$ is a tensor category.
Let
$X=(X, \cdot_X)$
and
$Y=(Y, \cdot_Y)$
be objects in
$\setH$.   
We define the tensor product
$(X \otimes Y, \cdot_{X \otimes Y})$
by:
\[
X \otimes Y = (X \times Y, \cdot_{X \otimes Y});
\lambda \cdot_{X \otimes Y} (x, y) = (\lambda \cdot_{X} x) \cdot_{Y} y
\quad(\forall
\lambda \in H, \forall(x, y) \in X\times Y).
\]
This
$X \otimes Y$ is an object of
$\setH$.  

The tensor product 
$f \otimes g: (X, \cdot_X) \otimes (X', \cdot_{X'}) \to 
(Y, \cdot_Y) \otimes (Y', \cdot_{Y'})$
of two morphisms
$f: (X, \cdot_X)\to (Y, \cdot_Y)$ and $g: (X', \cdot_{X'})\to (Y', \cdot_{Y'})$
is defined by
\[
(f \otimes g) (\lambda) (x, y) = (f (\lambda) (x), g (\lambda \cdot_X x) (y))
\quad(\lambda \in H, (x, y) \in X\times Y).
\]
It is easily seen that 
$f \otimes g$
is a morphism of
$\setH$. 

The associativity constraint 
$a: \otimes (\otimes \times \id) \to \otimes (\id \times \otimes)$
is given as follows: for
$X, Y, Z\in\setH$,
\[
a_{X, Y, Z}(\lambda) ((x, y), z) = (x, (y, z)) \ \ ( \lambda \in H, ((x, y), z) \in
((X \times Y) \times Z)).
\]

Let
$I = \{ \bullet \}$
denote a set of one element $\bullet$, together with the map 
$\cdot_{I}: H \times I \to H$
defined by
$\lambda \cdot_{I} \bullet = \lambda$
$(\lambda\in H)$.
This
$I=(I, \cdot_{I})$
is an object in
$\setH$,
which is called
a unit
of
$\setH$. 

The left and right unit constraints
$l: \otimes(I\times \id)\to\id$
and
$r: \otimes(\id\times I)\to\id$
with respect to the unit
$I$
are given by:
\[
l_{X}(\lambda)(\bullet, x) = x,
\quad
r_{X}(\lambda)(x, \bullet) = x
\quad
(\lambda\in H, x \in X)
\]
for every
$X\in\setH$.

We can check that
$(\setH, \otimes, I, a, l, r)$
is a tensor category.
\section{Reflection equation}
\label{section:reflection}
In this section, we establish a way to construct
solutions to the reflection equation
in tensor categories with suitable properties.
Finally we will present a sufficient condition
for one of these properties
in the case of the tensor category $\setH$.

Let $\mathcal{C}$ be a tensor category
(Definition \ref{def:tensorcat}).
\begin{proposition}\label{prop:k1}
Let $A, X\in\mathcal{C}$,
and
write
$Y=A\otimes X$.
Suppose that
morphisms
$m: A\otimes A\to A, \eta: I\to A,
m_X: A\otimes X\to X,
m_Y: A\otimes Y\to Y$,
and $\sigma: A\otimes A\to A\otimes A$
of $\mathcal{C}$
satisfy
\begin{align}
&m(1_A\otimes\eta)r_A^{-1}=1_A,
\label{eq:monoid2-2}
\\
&m_X(\eta\otimes 1_X)=l_X,
\label{eq:leftmod2}
\\
&m_Y(m\otimes 1_Y)=m_Y(1_A\otimes m_Y)a_{AAY},
\label{eq:leftmodmY}
\\
&m_Y(1_A\otimes m_Y^\mathrm{triv})=
m_Y^\mathrm{triv}(1_A\otimes m_Y)a_{AAY}(\sigma\otimes 1_Y)a_{AAY}^{-1},
\label{eq:braidcommmYmYtriv}
\\
&m_Xm_Y=m_X(1_A\otimes m_X).
\label{eq:mXmY}
\end{align}
Here,
\begin{equation}\label{eq:mYtriv}
m_Y^\mathrm{triv}:=(m\otimes 1_X)a_{AAX}^{-1}: A\otimes Y\to Y.
\end{equation}
Then
the morphism
\begin{equation}\label{eq:k}
k=m_Y(1_A\otimes ((\eta\otimes 1_X)l_X^{-1})): A\otimes X\to A\otimes X
\end{equation}
of $\mathcal{C}$ enjoys\/$:$
\begin{align}
&
m_Xk=m_X;
\label{eq:mXk3}
\\
\label{eq:mXk1}
&k(m\otimes 1_X)
\\
&\quad=(m\otimes 1_X)a_{AAX}^{-1}(1_A\otimes k)a_{AAX}
(\sigma\otimes 1_X)a_{AAX}^{-1}(1_A\otimes k)a_{AAX}.
\notag
\end{align}
\end{proposition}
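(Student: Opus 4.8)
The plan is to verify \eqref{eq:mXk3} as a short warm-up and then reduce \eqref{eq:mXk1} to a single auxiliary identity for $m_Y$. For \eqref{eq:mXk3}: expand $m_Xk=m_Xm_Y\bigl(1_A\otimes((\eta\otimes1_X)l_X^{-1})\bigr)$, rewrite the leading $m_Xm_Y$ as $m_X(1_A\otimes m_X)$ using \eqref{eq:mXmY}, and pull the identities-on-$A$ together by functoriality of $\otimes$ so that the inner composite becomes $m_X(\eta\otimes1_X)l_X^{-1}$; by \eqref{eq:leftmod2} this is $l_Xl_X^{-1}=1_X$, so $m_Xk=m_X$. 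This step is purely formal.

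For \eqref{eq:mXk1} write $j_X:=(\eta\otimes1_X)l_X^{-1}\colon X\to Y$, so $k=m_Y(1_A\otimes j_X)$. The first reduction is $k(m\otimes1_X)=m_Y(1_A\otimes k)a_{AAX}$: use the interchange law to replace $(1_A\otimes j_X)(m\otimes1_X)$ by $(m\otimes1_Y)(1_{A\otimes A}\otimes j_X)$, apply the associativity \eqref{eq:leftmodmY} to the factor $m_Y(m\otimes1_Y)$, slide $j_X$ through $a_{AAY}$ by naturality of $a$, and recognise $m_Y(1_A\otimes j_X)=k$ again.

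The substantive step is the auxiliary identity $m_Y=m_Y^{\mathrm{triv}}(1_A\otimes k)a_{AAX}(\sigma\otimes1_X)a_{AAX}^{-1}$ of morphisms $A\otimes Y\to Y$. I would derive it from the coherence lemma $m_Y^{\mathrm{triv}}(1_A\otimes j_X)=1_{A\otimes X}$: unfolding the definitions gives $(m\otimes1_X)a_{AAX}^{-1}\bigl(1_A\otimes(\eta\otimes1_X)\bigr)(1_A\otimes l_X^{-1})$, naturality of $a$ turns $a_{AAX}^{-1}\bigl(1_A\otimes(\eta\otimes1_X)\bigr)$ into $\bigl((1_A\otimes\eta)\otimes1_X\bigr)a_{AIX}^{-1}$, and then $m(1_A\otimes\eta)=r_A$ from \eqref{eq:monoid2-2} together with the triangle axiom \eqref{eq:triangle} collapses everything to $1_A\otimes1_X$. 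Hence $m_Y=m_Y(1_A\otimes m_Y^{\mathrm{triv}})\bigl(1_A\otimes(1_A\otimes j_X)\bigr)$; now apply the braid-commuting relation \eqref{eq:braidcommmYmYtriv} to the factor $m_Y(1_A\otimes m_Y^{\mathrm{triv}})$, pass $j_X$ through $\sigma\otimes1_Y$ (with which it commutes, acting on disjoint tensor factors) and through the remaining associators by naturality, and use $(1_A\otimes m_Y)\bigl(1_A\otimes(1_A\otimes j_X)\bigr)=1_A\otimes k$ to arrive at the claimed identity.

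Substituting the auxiliary identity into $k(m\otimes1_X)=m_Y(1_A\otimes k)a_{AAX}$ and recalling $m_Y^{\mathrm{triv}}=(m\otimes1_X)a_{AAX}^{-1}$ then gives \eqref{eq:mXk1} directly. The only real obstacle I anticipate is the coherence bookkeeping: keeping the associators $a_{AAX}$, $a_{AAY}$, $a_{AIX}$ in their proper places while sliding the unit-insertion $j_X$ around, and in particular the lemma $m_Y^{\mathrm{triv}}(1_A\otimes j_X)=1_{A\otimes X}$, which is exactly the point where the triangle axiom is used. Everything else is one application each of \eqref{eq:monoid2-2}--\eqref{eq:mXmY} together with functoriality of $\otimes$ and naturality of $a$; in a strict tensor category the computation is essentially two lines.
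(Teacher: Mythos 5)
Your proposal is correct and follows essentially the same route as the paper: the identity $k(m\otimes 1_X)=m_Y(1_A\otimes k)a_{AAX}$ from \eqref{eq:leftmodmY} is the paper's \eqref{eq:mYk2}, and your auxiliary identity $m_Y=m_Y^{\mathrm{triv}}(1_A\otimes k)a_{AAX}(\sigma\otimes 1_X)a_{AAX}^{-1}$ is exactly the paper's \eqref{eq:mYk1}, which the paper likewise derives from \eqref{eq:braidcommmYmYtriv}, \eqref{eq:monoid2-2}, and the triangle axiom (you merely run the computation in the opposite direction, isolating the unit lemma $m_Y^{\mathrm{triv}}(1_A\otimes j_X)=1_Y$ explicitly). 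The treatment of \eqref{eq:mXk3} also matches the paper's one-line argument.
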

\begin{proof}
We can easily prove \eqref{eq:mXk3}
by taking account of \eqref{eq:leftmod2},
\eqref{eq:mXmY},
and \eqref{eq:k}.

It follows from \eqref{eq:monoid2-2},
\eqref{eq:braidcommmYmYtriv}, \eqref{eq:mYtriv}, and \eqref{eq:k} that
\begin{equation}\label{eq:mYk1}
m_Y=(m\otimes 1_X)a_{AAX}^{-1}(1_A\otimes k)a_{AAX}(\sigma\otimes 1_X)
a_{AAX}^{-1}.
\end{equation}
In fact,
the right-hand-side of \eqref{eq:mYk1}
is
\[
m_Y^\mathrm{triv}(1_A\otimes m_Y)a_{AAY}(\sigma\otimes 1_{Y})
(1_{A\otimes A}\otimes(\eta\otimes 1_X))(1_{A\otimes A}\otimes l_X^{-1})
a_{AAX}^{-1}
\]
by \eqref{eq:mYtriv} and \eqref{eq:k}.
\eqref{eq:braidcommmYmYtriv} yields that
the right-hand-side of the above equation is
$m_Y(1_A\otimes m_Y^\mathrm{triv})a_{AAY}
(1_{A\otimes A}\otimes(\eta\otimes 1_X))(1_{A\otimes A}\otimes l_X^{-1})
a_{AAX}^{-1}$,
which is exactly $m_Y$ on account of 
\eqref{eq:triangle},
\eqref{eq:monoid2-2},
and
\eqref{eq:mYtriv}.

It is clear from \eqref{eq:leftmodmY} and \eqref{eq:k}
that
\begin{equation}\label{eq:mYk2}
k(m\otimes 1_X)=m_Y(1_A\otimes k)a_{AAX}.
\end{equation}
By combining \eqref{eq:mYk1} with \eqref{eq:mYk2},
we immediately get \eqref{eq:mXk1}.
\end{proof}
\begin{proposition}\label{prop:k2}
Let $A, X\in\mathcal{C}$,
and
set
$Y=A\otimes X$.
We assume that
morphisms
$m: A\otimes A\to A, \eta: I\to A,
m_X: A\otimes X\to X,
m_Y: A\otimes Y\to Y$,
and $\sigma: A\otimes A\to A\otimes A$
of $\mathcal{C}$
satisfy $\eqref{eq:braidcommmYmYtriv}$
and
\begin{align}
&m(\eta\otimes 1_A)l_A^{-1}=1_A=
m(1_A\otimes\eta)r_A^{-1},
\label{eq:monoid2}
\\
&(m\otimes 1_A)a_{AAA}^{-1}(1_A\otimes\sigma)a_{AAA}
(\sigma\otimes 1_A)=\sigma(1_A\otimes m)a_{AAA},
\label{eq:braidedmonoid2}
\\
&\sigma(1_A\otimes\eta)=(\eta\otimes 1_A)l_A^{-1}r_A,
\label{eq:braidedmonoid4}
\\
&a_{AAA}(\sigma\otimes 1_A)a_{AAA}^{-1}(1_A\otimes\sigma)
a_{AAA}(\sigma\otimes 1_A)
\label{eq:braidrel}\\
&\quad=
(1_A\otimes\sigma)a_{AAA}(\sigma\otimes 1_A)a_{AAA}^{-1}
(1_A\otimes\sigma)a_{AAA},
\notag
\\
&m_Y(1_A\otimes m_Y^\sigma)=m_Y^\sigma(1_A\otimes m_Y)a_{AAY}(\sigma\otimes 1_Y)
a_{AAY}^{-1}.
\label{eq:braidcommmYmYsigma}
\end{align}
Here,
\begin{equation}\label{eq:mYsigma}
m_Y^\sigma=(1_A\otimes m_X)a_{AAX}(\sigma\otimes 1_X)a_{AAX}^{-1}:
A\otimes Y\to Y.
\end{equation}
If $\sigma: A\otimes A\to A\otimes A$ is an isomorphism,
then the morphism
$k: A\otimes X\to A\otimes X$
$\eqref{eq:k}$ enjoys
\begin{align}\label{eq:mXk2}
&k(1_A\otimes m_X)
\\
&\quad=(1_A\otimes m_X)a_{AAX}(\sigma\otimes 1_X)a_{AAX}^{-1}
(1_A\otimes k)a_{AAX}(\sigma\otimes 1_X)a_{AAX}^{-1}.
\notag
\end{align}
\end{proposition}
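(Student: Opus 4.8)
The plan is to mimic the proof of Proposition~\ref{prop:k1}, with the ``trivial'' module $m_Y^{\mathrm{triv}}$ there replaced throughout by $m_Y^{\sigma}$. Put $e=(\eta\otimes 1_X)l_X^{-1}\colon X\to A\otimes X$, so that $k=m_Y(1_A\otimes e)$ by \eqref{eq:k}. Since $m_Y^{\sigma}=(1_A\otimes m_X)a_{AAX}(\sigma\otimes 1_X)a_{AAX}^{-1}$ by \eqref{eq:mYsigma}, the right-hand side of \eqref{eq:mXk2} is exactly $m_Y^{\sigma}(1_A\otimes k)\,a_{AAX}(\sigma\otimes 1_X)a_{AAX}^{-1}$, so the whole assertion reduces to the single identity
\begin{equation*}
k(1_A\otimes m_X)=m_Y^{\sigma}(1_A\otimes k)\,a_{AAX}(\sigma\otimes 1_X)a_{AAX}^{-1},
\end{equation*}
which I would extract by pre-composing \eqref{eq:braidcommmYmYsigma} with $1_A\otimes(1_A\otimes e)$ and simplifying both sides.

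The decisive preliminary computation is the ``unit'' formula
\begin{equation*}
m_Y^{\sigma}(1_A\otimes e)=e\circ m_X\colon A\otimes X\to A\otimes X .
\end{equation*}
To prove it one plugs $1_A\otimes e$ into $m_Y^{\sigma}$: naturality of $a$ together with the triangle axiom \eqref{eq:triangle} gives $a_{AAX}^{-1}(1_A\otimes e)=\big((1_A\otimes\eta)r_A^{-1}\big)\otimes 1_X$, then \eqref{eq:braidedmonoid4} rewrites $\sigma(1_A\otimes\eta)$ as $(\eta\otimes 1_A)l_A^{-1}r_A$ (the unit is ``carried past'' $\sigma$ to the leftmost factor), and re-associating and using naturality of $l$ and bifunctoriality of $\otimes$ leaves exactly $(\eta\otimes 1_X)l_X^{-1}m_X=e\circ m_X$. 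This is the $\sigma$-counterpart of the identity $m_Y^{\mathrm{triv}}(1_A\otimes e)=1_Y$ implicit in the proof of Proposition~\ref{prop:k1}; it is precisely what makes the left module $m_X$ (rather than the multiplication $m$) appear on the left-hand side of \eqref{eq:mXk2}.

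Granting this, the argument closes in a few lines. Pre-composing the left-hand side of \eqref{eq:braidcommmYmYsigma} with $1_A\otimes(1_A\otimes e)$ and using bifunctoriality of $\otimes$ gives
\begin{align*}
m_Y(1_A\otimes m_Y^{\sigma})\big(1_A\otimes(1_A\otimes e)\big)
&=m_Y\big(1_A\otimes(m_Y^{\sigma}(1_A\otimes e))\big)\\
&=m_Y(1_A\otimes e)(1_A\otimes m_X)=k(1_A\otimes m_X).
\end{align*}
For the right-hand side, naturality of the associativity constraint yields $a_{AAY}^{-1}\big(1_A\otimes(1_A\otimes e)\big)=(1_{A\otimes A}\otimes e)a_{AAX}^{-1}$, while $\sigma\otimes 1_Y$ commutes with $1_{A\otimes A}\otimes e$ by bifunctoriality; hence $a_{AAY}(\sigma\otimes 1_Y)a_{AAY}^{-1}\big(1_A\otimes(1_A\otimes e)\big)=\big(1_A\otimes(1_A\otimes e)\big)a_{AAX}(\sigma\otimes 1_X)a_{AAX}^{-1}$, and so the right-hand side of \eqref{eq:braidcommmYmYsigma}, pre-composed the same way, becomes $m_Y^{\sigma}\big(1_A\otimes(m_Y(1_A\otimes e))\big)a_{AAX}(\sigma\otimes 1_X)a_{AAX}^{-1}=m_Y^{\sigma}(1_A\otimes k)\,a_{AAX}(\sigma\otimes 1_X)a_{AAX}^{-1}$. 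Equating the two and expanding $m_Y^{\sigma}$ via \eqref{eq:mYsigma} gives \eqref{eq:mXk2}.

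The one genuinely fiddly point is the ``unit'' formula $m_Y^{\sigma}(1_A\otimes e)=e\circ m_X$: it calls for careful bookkeeping of the associativity and unit constraints together with \eqref{eq:braidedmonoid4}, and that is where most of the effort goes. The remaining hypotheses — invertibility of $\sigma$ and the braided-monoid axioms \eqref{eq:monoid2}, \eqref{eq:braidedmonoid2} and \eqref{eq:braidrel} — I would expect to be what guarantees that $m_Y^{\sigma}$ is an honest left $A$-module structure on $Y$ and that \eqref{eq:braidcommmYmYsigma} is a meaningful hypothesis (i.e.\ that the setting of the proposition is non-empty), rather than to intervene in the formal deduction above.
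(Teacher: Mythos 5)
Your proof is correct, and it takes a genuinely different and more economical route than the paper's. The paper proves \eqref{eq:mXk2} by a long backward computation: it starts from the right-hand side, inserts $m(\eta\otimes 1_A)l_A^{-1}=1_A$, and then successively invokes the braid relation \eqref{eq:braidrel}, the braided-monoid axiom \eqref{eq:braidedmonoid2}, the identity \eqref{eq:mYk1} (which itself rests on \eqref{eq:braidcommmYmYtriv}), \eqref{eq:braidcommmYmYsigma}, \eqref{eq:braidedmonoid4}, and the invertibility of $\sigma$. You instead observe that \eqref{eq:mXk2} is nothing but \eqref{eq:braidcommmYmYsigma} precomposed with $1_A\otimes(1_A\otimes e)$, $e=(\eta\otimes 1_X)l_X^{-1}$, once one has the unit formula $m_Y^\sigma(1_A\otimes e)=e\,m_X$; this is the exact $\sigma$-analogue of the step in the proof of Proposition \ref{prop:k1} where \eqref{eq:mYk1} is extracted from \eqref{eq:braidcommmYmYtriv} via $m_Y^{\mathrm{triv}}(1_A\otimes e)=1_Y$. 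I have checked the unit formula in detail: $a_{AAX}^{-1}(1_A\otimes e)=((1_A\otimes\eta)r_A^{-1})\otimes 1_X$ by naturality and the triangle axiom, \eqref{eq:braidedmonoid4} turns $\sigma(1_A\otimes\eta)r_A^{-1}$ into $(\eta\otimes 1_A)l_A^{-1}$, and the remaining re-association collapses to $(\eta\otimes 1_X)l_X^{-1}m_X$ using $l_{A\otimes X}=(l_A\otimes 1_X)a_{IAX}^{-1}$ and naturality of $l$; the rest of your argument is pure naturality and bifunctoriality. What your approach buys is substantial: the proof is a few lines instead of two pages, and it uses only \eqref{eq:braidedmonoid4}, \eqref{eq:braidcommmYmYsigma}, and the coherence axioms, so the hypotheses \eqref{eq:monoid2}, \eqref{eq:braidedmonoid2}, \eqref{eq:braidrel}, \eqref{eq:braidcommmYmYtriv}, and the invertibility of $\sigma$ are in fact redundant for this proposition. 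Your closing speculation that those extra hypotheses merely ensure the setting is non-vacuous is not how the paper uses them --- its proof invokes all of them explicitly --- but your derivation shows they are not needed, which slightly strengthens the statement.
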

\begin{remark}
We will prove Proposition \ref{prop:k2} 
by refering to the proof of \cite[Lemma 7.4]{decommer}, 
which was discussed with the help of graphical calculation. 
From the viewpoint of category theory, 
we proceed with a proof of Proposition \ref{prop:k2}.
\end{remark}
\begin{proof}
In view of \eqref{eq:monoid2},
the right-hand-side of \eqref{eq:mXk2} is
\begin{align}
&((m(\eta\otimes 1_X)l_X^{-1})\otimes 1_X)(1_A\otimes m_X)
a_{AAX}(\sigma\otimes 1_X)a_{AAX}^{-1}(1_A\otimes k)a_{AAX}
\label{eq:morphism1}
\\
&\quad
(\sigma\otimes 1_X)a_{AAX}^{-1}
\notag
\\
=&
(m\otimes m_X)a_{A\otimes AAX}((a_{AAA}^{-1}(1_A\otimes\sigma)
a_{AAA})\otimes 1_X)a_{A\otimes AAX}^{-1}
(1_{A\otimes A}\otimes k)
\notag
\\
&\quad
a_{A\otimes AAX}((a_{AAA}^{-1}(1_A\otimes\sigma)
a_{AAA}(((\eta\otimes 1_A)l_A^{-1})\otimes 1_A))\otimes 1_X)a_{AAX}^{-1}.
\notag
\end{align}
In fact, we can prove it by
using $(((\eta\otimes 1_A)l_A^{-1})\otimes 1_A)\sigma
=
a_{AAA}^{-1}(1_A\otimes\sigma)a_{AAA}(((\eta\otimes 1_A)l_A^{-1})
\otimes 1_A)$,
which comes from the fact that
$l_{A\otimes A}=(l_A\otimes 1_A)a_{IAA}^{-1}$
\cite[Lemma XI.2.2]{kassel}.

Because $\sigma$ is an isomorphism and satisfies \eqref{eq:braidedmonoid4},
the right-hand-side of \eqref{eq:morphism1}
is
\begin{align*}
&(m\otimes m_X)a_{A\otimes AAX}((a_{AAA}^{-1}(1_A\otimes\sigma)
a_{AAA})\otimes 1_X)a_{A\otimes AAX}^{-1}
(1_{A\otimes A}\otimes k)
a_{A\otimes AAX}
\\
&\quad
((\sigma\otimes 1_A)\otimes 1_X)
(((\sigma^{-1}\otimes 1_A)a_{AAA}^{-1}(1_A\otimes\sigma)
a_{AAA}(\sigma\otimes 1_A))\otimes 1_X)
\\
&\quad
((((1_A\otimes\eta)r_A^{-1})\otimes 1_A)\otimes 1_X)
a_{AAX}^{-1},
\end{align*}
which is
\begin{align}
&(1_A\otimes m_X)a_{AAX}
(((m\otimes 1_A)a_{AAA}^{-1}(1_A\otimes\sigma)
a_{AAA}(\sigma\otimes 1_A))\otimes 1_X)
\label{eq:morphism2}
\\
&\quad
a_{A\otimes AAX}^{-1}(1_{A\otimes A}\otimes k)a_{A\otimes AAX}
\notag
\\
&\quad((a_{AAA}^{-1}(1_A\otimes\sigma)a_{AAA}(\sigma\otimes 1_A)
a_{AAA}^{-1}(1_A\otimes\sigma^{-1})a_{AAA})\otimes 1_X)
\notag
\\
&\quad
((((1_A\otimes\eta)r_A^{-1})\otimes 1_A)\otimes 1_X)
a_{AAX}^{-1}
\notag
\end{align}
owing to \eqref{eq:braidrel}.
On account of 
\eqref{eq:braidedmonoid2}
and \eqref{eq:mYsigma},
\eqref{eq:morphism2}
is
\begin{align}
&m_Y^\sigma a_{AAX}
(((1_A\otimes m)a_{AAA})\otimes 1_X)a_{A\otimes AAX}^{-1}
(1_{A\otimes A}\otimes k)
a_{A\otimes AAX}
\label{eq:morphism3}
\\
&\quad
((a_{AAA}^{-1}(1_A\otimes\sigma)a_{AAA}(\sigma\otimes 1_A)
a_{AAA}^{-1}(1_A\otimes\sigma^{-1})a_{AAA})\otimes 1_X)
\notag
\\
&\quad
((((1_A\otimes\eta)r_A^{-1})\otimes 1_A)\otimes 1_X)
a_{AAX}^{-1}
\notag
\\
=&m_Y^\sigma
(1_A\otimes ((m\otimes 1_X)a_{AAX}^{-1}(1_A\otimes k)
a_{AAX}(\sigma\otimes 1_X)))
a_{AA\otimes AX}
\notag
\\
&\quad
((a_{AAA}(\sigma\otimes 1_A)a_{AAA}^{-1}(1_A\otimes\sigma^{-1})a_{AAA})
\otimes 1_X)
\notag
\\
&\quad
((((1_A\otimes\eta)r_A^{-1})\otimes 1_A)\otimes 1_X)
a_{AAX}^{-1}.
\notag
\end{align}

We note that 
\eqref{eq:mYk1} holds because of 
\eqref{eq:braidcommmYmYtriv},
\eqref{eq:k},
and
\eqref{eq:monoid2}.
It follows from \eqref{eq:mYk1} that
the right-hand-side of \eqref{eq:morphism3}
is 
$m_Y^\sigma
(1_A\otimes m_Y)
a_{AAY}(\sigma\otimes 1_Y)
a_{A\otimes AAX}
((a_{AAA}^{-1}(1_A\otimes\sigma^{-1})a_{AAA})
\otimes 1_X)
((((1_A\otimes\eta)r_A^{-1})\otimes 1_A)\otimes 1_X)
a_{AAX}^{-1}$,
which is exactly the same as
\begin{align}
&(m\otimes 1_X)a_{AAX}^{-1}
(1_A\otimes k)
a_{AAX}
(\sigma\otimes 1_X)
a_{AAX}^{-1}
\label{eq:morphism4}
\\
&\quad
(1_A\otimes((1_A\otimes m_X)a_{AAX}(\sigma\otimes 1_X)))
a_{AA\otimes AX}
\notag
\\
&\quad
(((1_A\otimes\sigma^{-1})a_{AAA})\otimes 1_X)
((((1_A\otimes\eta)r_A^{-1})\otimes 1_A)\otimes 1_X)
a_{AAX}^{-1}
\notag
\\
=&
(m\otimes 1_X)a_{AAX}^{-1}
(1_A\otimes k)
a_{AAX}
(\sigma\otimes 1_X)
a_{AAX}^{-1}
\notag
\\
&\quad
(1_A\otimes((1_A\otimes m_X)a_{AAX}))
a_{AA\otimes AX}
(a_{AAA}\otimes 1_X)
\notag
\\
&\quad((((1_A\otimes\eta)r_A^{-1})\otimes 1_A)\otimes 1_X)
a_{AAX}^{-1}
\notag
\\
=&
(m\otimes 1_X)a_{AAX}^{-1}
(1_A\otimes k)
a_{AAX}
((\sigma(1_A\otimes\eta))\otimes 1_X)
a_{AIX}^{-1}
\notag
\\
&\quad
(1_A\otimes((1_I\otimes m_X)a_{IAX}))a_{AI\otimes AX}
(a_{AIA}\otimes 1_X)
((r_A^{-1}\otimes 1_A)\otimes 1_X)
\notag
\\&\quad
a_{AAX}^{-1}
\notag
\end{align}
in view of
\eqref{eq:k},
\eqref{eq:braidcommmYmYsigma},
and
\eqref{eq:mYsigma}.

Because of 
\eqref{eq:braidedmonoid4},
the right-hand-side of \eqref{eq:morphism4}
is the morphism
\begin{align*}
&((m(\eta\otimes 1_A))\otimes 1_X)a_{IAX}^{-1}
(1_I\otimes k)
a_{IAX}
((l_A^{-1}r_A)\otimes 1_X)
a_{AIX}^{-1}
\\
&\quad(1_A\otimes ((1_I\otimes m_X)a_{IAX}))
a_{AI\otimes AX}
(a_{AIA}\otimes 1_X)
((r_A^{-1}\otimes 1_A)\otimes 1_X)a_{AAX}^{-1}.
\end{align*}
By using
\eqref{eq:monoid2},
together with
the triangle axiom \eqref{eq:triangle}
and
$l_{A\otimes X}=(l_A\otimes 1_X)a_{IAX}^{-1}$
\cite[Lemma XI.2.2]{kassel},
we deduce
that
the above morphism is
the left-hand-side of \eqref{eq:mXk2}.
This proves the proposition.
\end{proof}
The equation \eqref{eq:braidrel} is exactly the braid relation in $\mathcal{C}$.
\begin{proposition}\label{prop:pre-reflection}
For $A, X\in\mathcal{C}$, we suppose that
morhpisms $m: A\otimes A\to A, m_X: A\otimes X\to X,
\sigma: A\otimes A\to A\otimes A$, and $k: A\otimes X\to A\otimes X$
of $\mathcal{C}$ satisfy
$\eqref{eq:mXk3}, \eqref{eq:mXk1}, \eqref{eq:mXk2}$,
and
\begin{equation}
m\sigma=m.
\label{eq:mAr}
\end{equation}
Then 
\begin{align}
&(m\otimes 1_X)a_{AAX}^{-1}(1_A\otimes k)a_{AAX}
(\sigma\otimes 1_X)a_{AAX}^{-1}
(1_A\otimes k)a_{AAX}
\label{eq:prereflection1}
\\
&\quad
(\sigma\otimes 1_X)a_{AAX}^{-1}
\notag
\\
=&
(m\otimes 1_X)a_{AAX}^{-1}(1_A\otimes k)a_{AAX}
(\sigma\otimes 1_X)a_{AAX}^{-1}
(1_A\otimes k),
\notag
\\
&(1_A\otimes m_X)a_{AAX}(\sigma\otimes 1_X)a_{AAX}^{-1}
(1_A\otimes k)a_{AAX}(\sigma\otimes 1_X)
\label{eq:prereflection2}
\\
&\quad
a_{AAX}^{-1}(1_A\otimes k)
\notag
\\
=&
(1_A\otimes m_X)a_{AAX}(\sigma\otimes 1_X)a_{AAX}^{-1}
(1_A\otimes k)a_{AAX}(\sigma\otimes 1_X)a_{AAX}^{-1}.
\notag
\end{align}
\end{proposition}
\begin{proof}
It follows from \eqref{eq:mXk1} that 
the left-hand-side of \eqref{eq:prereflection1} is
$k(m\otimes 1_X)(\sigma\otimes 1_X)a_{AAX}^{-1}$,
which is $k(m\otimes 1_X)a_{AAX}^{-1}$
because of \eqref{eq:mAr}.
By using \eqref{eq:mXk1} again,
we conclude that $k(m\otimes 1_X)a_{AAX}^{-1}$ is 
the right-hand-side of \eqref{eq:prereflection1}.

By making use of \eqref{eq:mXk3} and \eqref{eq:mXk2}
instead of \eqref{eq:mXk1} and \eqref{eq:mAr},
we can show \eqref{eq:prereflection2} in much the same way.
\end{proof}

A pair of morphisms $(f: X\to Y, g: X\to Y)$ of $\mathcal{C}$ 
is called monic,
iff this pair satisfies the following condition:
\[
fh=fh'\text{\ and\ }gh=gh'
\ (h, h': A\to X)\text{\ induce\ } h=h'. 
\]
This notion is a generalization of the usual monomorphism.
\begin{corollary}\label{thm:reflection}
  For $A, X\in\mathcal{C}$, we suppose that
  morhpisms $m: A\otimes A\to A, m_X: A\otimes X\to X,
  \sigma: A\otimes A\to A\otimes A$, and $k: A\otimes X\to A\otimes X$
  of $\mathcal{C}$ satisfy
  $\eqref{eq:mXk3}, \eqref{eq:mXk1}, \eqref{eq:mXk2}$,
  and
  $\eqref{eq:mAr}$.
  If the pair $(m\otimes 1_X, (1_A\otimes m_X)a_{AAX})$ of morphisms
is monic, then 
$k: A\otimes X\to A\otimes X$
is a solution to the reflection equation
associated with $\sigma$\/$:$
\begin{align}
&a_{AAX}^{-1}(1_A\otimes k)a_{AAX}(\sigma\otimes 1_X)a_{AAX}^{-1}
(1_A\otimes k)a_{AAX}(\sigma\otimes 1_X)a_{AAX}^{-1}
\label{eq:RE}
\\
=&
(\sigma\otimes 1_X)a_{AAX}^{-1}(1_A\otimes k)a_{AAX}
(\sigma\otimes 1_X)a_{AAX}^{-1}
(1_A\otimes k).
\notag
\end{align}
\end{corollary}
\begin{proof}
From Proposition \ref{prop:pre-reflection},
\eqref{eq:mXk3}, \eqref{eq:mAr},
\eqref{eq:prereflection1},
and \eqref{eq:prereflection2},
\begin{align*}
  &(m\otimes 1_X)a_{AAX}^{-1}(1_A\otimes k)a_{AAX}
  (\sigma\otimes 1_X)a_{AAX}^{-1}
  (1_A\otimes k)a_{AAX}
  \\
  &\quad
  (\sigma\otimes 1_X)a_{AAX}^{-1}
  \\
  =&
  (m\otimes 1_X)(\sigma\otimes 1_X)
  a_{AAX}^{-1}(1_A\otimes k)a_{AAX}
  (\sigma\otimes 1_X)a_{AAX}^{-1}
  (1_A\otimes k),
  \\
  &(1_A\otimes m_X)a_{AAX}(\sigma\otimes 1_X)a_{AAX}^{-1}
  (1_A\otimes k)a_{AAX}(\sigma\otimes 1_X)
  \\
  &\quad
  a_{AAX}^{-1}(1_A\otimes k)
  \\
  =&
  (1_A\otimes m_X)(1_A\otimes k)
  a_{AAX}(\sigma\otimes 1_X)a_{AAX}^{-1}
  (1_A\otimes k)a_{AAX}(\sigma\otimes 1_X)a_{AAX}^{-1}.
  \end{align*}
Because the pair $(m\otimes 1_X, (1_A\otimes m_X)a_{AAX})$
is monic, \eqref{eq:RE} follows.
\end{proof}

The dynamical reflection map is a solution to the reflection equation 
\eqref{eq:RE}
in the tensor category $\setH$.

We now present a sufficient condition for the 
pair $(m\otimes 1_X, (1_A\otimes m_X)a_{AAX})$
of the tensor category $\setH$ to be monic.
\begin{proposition}\label{cor:reflection}
For $A, X\in\setH$,
let
$m: A\otimes A\to A$
and $m_X: A\otimes X\to X$
be morphisms of $\setH$.
If the maps $A\ni b\mapsto m(\lambda)(a, b)\in A$ are injective for any
$\lambda\in H$ and $a\in A$, then
the pair $(m\otimes 1_X, (1_A\otimes m_X)a_{AAX})$ is monic.
\end{proposition}
\begin{proof}
Let $\lambda\in H$, $a_1, a_2, b_1, b_2\in A$, and $x_1, x_2\in X$.
It suffices to prove that
$(m\otimes 1_X)(\lambda)(a_1, b_1, x_1)=(m\otimes 1_X)(\lambda)(a_2, b_2, x_2)$
and 
$((1_A\otimes m_X)a_{AAX})(\lambda)(a_1, b_1, x_1)=((1_A\otimes m_X)a_{AAX})(\lambda)(a_2, b_2, x_2)$
induce $(a_1, b_1, x_1)=(a_2, b_2, x_2)$.

From these equations
\begin{align*}
&(m(\lambda)(a_1, b_1), x_1)=(m(\lambda)(a_2, b_2), x_2),
\\
&(a_1, m_X(\lambda a_1)(b_1, x_1))
=(a_2, m_X(\lambda a_2)(b_2, x_2)).
\end{align*}
Thus $a_1=a_2, x_1=x_2$,
and $m(\lambda)(a_1, b_1)=m(\lambda)(a_2, b_2)
=m(\lambda)(a_1, b_2)$.
Because the map $A\ni b\mapsto m(\lambda)(a, b)\in A$ is injective,
$b_1=b_2$.
This completes the proof.
\end{proof}

The above proposition is useful for the construction
of dynamical Yang-Baxter maps
in Section \ref{section:construction}
(See Proposition \ref{prop:braidedmonoid}).
\section{Braided monoids}
\label{section:braided}
In this section,
we focus on the relations \eqref{eq:monoid2-2}
and \eqref{eq:monoid2}--\eqref{eq:braidedmonoid4},
which can produce monoids in the tensor category
(Cf.\ \cite[Section 5]{decommer}).

Let $\mathcal{C}$ be a tensor category.
\begin{definition}\label{def:monoid}
An object $A$ of $\mathcal{C}$, together with 
morphisms $m: A\otimes A\to A$
and
$\eta: I\to A$, is a monoid,
iff the morphisms satisfy
\eqref{eq:monoid2}
and
\begin{equation}
m(m\otimes 1_A)=m(1_A\otimes m)a_{AAA}.
\label{eq:asslaw} 
\end{equation}
\end{definition}

This monoid is also called a ring in \cite[Definition 4.3.1]{kashiwara}.
\begin{definition}\label{def:braided}
A monoid $(A, m, \eta)$ with a morphism
$\sigma: A\otimes A\to A\otimes A$ is braided,
iff the morphisms $m$, $\eta$, and $\sigma$ satisfy
\eqref{eq:braidedmonoid2},
\eqref{eq:braidedmonoid4},
and
\begin{align}
&(1_A\otimes m)a_{AAA}(\sigma\otimes 1_A)a_{AAA}^{-1}(1_A\otimes\sigma)
=\sigma(m\otimes 1_A)a_{AAA}^{-1};
\label{eq:braidedmonoid1} 
\\
&\sigma(\eta\otimes 1_A)=(1_A\otimes\eta)r_A^{-1}l_A.
\label{eq:braidedmonoid3} 
\end{align}
\end{definition}

For braided semigroups in the tensor category,
see \cite{matsumoto}.
\begin{proposition}\label{prop:twistedmonnoid}
If $(A, m, \eta, \sigma)$ is a braided monoid in $\mathcal{C}$,
then
$A\otimes A$
is a monoid in $\mathcal{C}$
with morphisms\/$:$
\begin{align}
m_{A\otimes A}
=&(m\otimes m)a_{A\otimes A AA}(a_{AAA}^{-1}\otimes 1_A)
((1_A\otimes\sigma)\otimes 1_A) 
\label{eq:mAotimesA1}
\\
&\quad 
(a_{AAA}\otimes 1_A)a_{A\otimes A AA}^{-1};
\notag
\\
\eta_{A\otimes A}=&(\eta\otimes\eta)l_I^{-1}(=(\eta\otimes\eta)r_I^{-1}).
\label{eq:etaAotimesA1}
\end{align}
\end{proposition}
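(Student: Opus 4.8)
The plan is to verify directly that the pair $(m_{A\otimes A},\eta_{A\otimes A})$ given by \eqref{eq:mAotimesA1}--\eqref{eq:etaAotimesA1} satisfies the two monoid axioms \eqref{eq:monoid2} and \eqref{eq:asslaw} of Definition \ref{def:monoid}, working entirely with the structural morphisms of $\mathcal{C}$ and the braided-monoid relations of Definition \ref{def:braided}. There is nothing to construct here beyond the formulas already displayed; the content is the axiom check, so the whole argument is a diagram chase. A useful preliminary step is to record the consequences of the coherence theorem (or rather of repeated use of the pentagon \eqref{eq:pentagon} and triangle \eqref{eq:triangle} axioms, together with \cite[Lemma XI.2.2]{kassel}) that let one rewrite associativity constraints on multi-fold tensor products; in practice I would suppress these, as is standard, and present the computation as if $\mathcal{C}$ were strict, reinstating the $a$'s only where a relation from Definition \ref{def:braided} is invoked.

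First I would dispatch the unit axiom \eqref{eq:monoid2} for $(A\otimes A, m_{A\otimes A}, \eta_{A\otimes A})$. Feeding $\eta_{A\otimes A}=(\eta\otimes\eta)l_I^{-1}$ into the left slot of $m_{A\otimes A}$, the inner block $(1_A\otimes\sigma)\otimes 1_A$ forces the use of $\sigma(\eta\otimes 1_A)=(1_A\otimes\eta)r_A^{-1}l_A$, i.e.\ \eqref{eq:braidedmonoid3}; after that the two outer copies of $m$ each receive $\eta$ in a slot and collapse by \eqref{eq:monoid2} for $(A,m,\eta)$, and the leftover unit constraints cancel by triangle-type identities. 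The right-unit case is symmetric but uses \eqref{eq:braidedmonoid4}, namely $\sigma(1_A\otimes\eta)=(\eta\otimes 1_A)l_A^{-1}r_A$, in place of \eqref{eq:braidedmonoid3}. These two checks are routine bookkeeping.

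The associativity axiom \eqref{eq:asslaw} for $m_{A\otimes A}$ is the substantive step. Writing both sides of $m_{A\otimes A}(m_{A\otimes A}\otimes 1_{A\otimes A})=m_{A\otimes A}(1_{A\otimes A}\otimes m_{A\otimes A})a_{A\otimes A,A\otimes A,A\otimes A}$ out in terms of $m$, $\sigma$, and constraints, one obtains two composites built from three copies of $m$ and several copies of $\sigma$ acting on $A^{\otimes 6}$. The strategy is the usual one for proving $A\otimes A$ is a monoid: use associativity \eqref{eq:asslaw} for $(A,m,\eta)$ to fuse the $m$'s in a chosen order on each side, and use the ``$\sigma$ commutes with $m$'' relations \eqref{eq:braidedmonoid1} and \eqref{eq:braidedmonoid2} to slide the braiding morphisms past the multiplications so that both sides are brought to a common normal form; the braid relation \eqref{eq:braidrel} is what guarantees that the two ways of moving $\sigma$'s across each other agree, so it will be needed at least once to reconcile the two normal forms. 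I expect the main obstacle to be exactly this reconciliation: keeping track of \emph{which} adjacent tensor factors each $\sigma$ acts on through the chain of rewrites, so that \eqref{eq:braidedmonoid1}, \eqref{eq:braidedmonoid2}, and \eqref{eq:braidrel} are applied on the correct triple of factors and the associativity constraints line up. Once the combinatorics of the factor-indexing is pinned down, each individual rewrite is forced, and the two sides meet.

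Finally I would note the parenthetical claim $(\eta\otimes\eta)l_I^{-1}=(\eta\otimes\eta)r_I^{-1}$, which is immediate since $l_I=r_I$ in any tensor category (a consequence of the coherence/triangle axioms, \cite[Lemma XI.2.2]{kassel}), so the definition of $\eta_{A\otimes A}$ is unambiguous. This completes the verification that $(A\otimes A, m_{A\otimes A}, \eta_{A\otimes A})$ is a monoid in $\mathcal{C}$.
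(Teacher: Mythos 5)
Your proposal is correct and takes essentially the same route as the paper: the unit axiom is dispatched via \eqref{eq:braidedmonoid3} and \eqref{eq:braidedmonoid4}, and associativity by a strict-category chase that fuses the $m$'s with \eqref{eq:asslaw} and slides the $\sigma$'s past them with \eqref{eq:braidedmonoid1} and \eqref{eq:braidedmonoid2}. One small correction to your forecast: the braid relation \eqref{eq:braidrel} is never invoked, because in this computation no $\sigma$ ever has to cross another $\sigma$ --- the two interchange relations alone already bring both sides to the common normal form.
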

\begin{proof}
For the proof, it suffices to show:
\begin{align}
&m_{A\otimes A}(m_{A\otimes A} \otimes 1_{A \otimes A}) = m_{A\otimes A}
(1_{A \otimes A} \otimes m_{A\otimes A})a_{A \otimes A A \otimes A A \otimes A};
\label{eq:mAotimesA}
\\
&
m_{A \otimes A}(\eta_{A \otimes A} \otimes 1_{A \otimes A})l_{A \otimes A}^{-1}
=1_{A \otimes A}=
m_{A \otimes A}(1_{A \otimes A}\otimes\eta_{A\otimes A})r_{A \otimes A}^{-1}.
\label{eq:etaAotimesA}
\end{align}
On account of
\eqref{eq:braidedmonoid4} and \eqref{eq:braidedmonoid3},
it is a simple matter to show \eqref{eq:etaAotimesA}.
We prove \eqref{eq:mAotimesA}
only, when the tensor category $\mathcal{C}$ is strict
\cite[Definition XI.2.1]{kassel};
hence, we assume in this proof that the associativity constraint $a$
and the unit constraints $l, r$ are all identities.

From \eqref{eq:mAotimesA1}, the left-hand-side of
$\eqref{eq:mAotimesA}$ is
\begin{align*}
&(m \otimes m)
(1_A\otimes\sigma\otimes 1_A)
(((m \otimes m)
(1_A\otimes\sigma\otimes 1_A))\otimes 1_{A\otimes A})
\\
=& (m \otimes m) 
(1_A\otimes\sigma(m\otimes 1_A)\otimes 1_A)
(m\otimes 1_{A\otimes A\otimes A\otimes A})(1_A\otimes\sigma\otimes
1_{A\otimes A\otimes A}).
\end{align*}
By virtue of \eqref{eq:braidedmonoid1},
the right-hand-side of the above equation is
\begin{align*}
&(m \otimes m) 
(1_A\otimes((1_A\otimes m)(\sigma\otimes 1_A)(1_A\otimes\sigma))
\otimes 1_A)
(m\otimes 1_{A\otimes A\otimes A\otimes A})
\\
&\quad(1_A\otimes\sigma\otimes
1_{A\otimes A\otimes A})
\\
=&
(m(m\otimes 1_A) \otimes 1_A) 
(1_{A\otimes A\otimes A}\otimes m(m\otimes 1_A))
\\
&\quad(1_{A\otimes A}\otimes((\sigma\otimes 1_A)(1_A\otimes\sigma))
\otimes 1_A)(1_A\otimes\sigma\otimes
1_{A\otimes A\otimes A}),
\end{align*}
which coincides with
\begin{align}
&(m(1_A\otimes m) \otimes 1_A) 
(1_{A\otimes A\otimes A}\otimes m(1_A\otimes m))
\label{eq:pfmAotimesA}
\\
&\quad
(1_{A\otimes A}\otimes((\sigma\otimes 1_A)(1_A\otimes\sigma))
\otimes 1_A)(1_A\otimes\sigma\otimes
1_{A\otimes A\otimes A})
\notag
\\
=&
(m \otimes 1_A) 
(1_{A\otimes A}\otimes m(1_A\otimes m))
\notag
\\
&\quad
(1_A\otimes ((m \otimes 1_A) 
(1_{A}\otimes\sigma)(\sigma\otimes
1_A))\otimes 1_{A\otimes A})(1_{A\otimes A\otimes A}\otimes\sigma\otimes 1_A)
\notag
\end{align}
in view of \eqref{eq:asslaw}.
Because of $\eqref{eq:braidedmonoid2}$,
the right-hand-side of  $\eqref{eq:pfmAotimesA}$ is
\begin{align*}
&(m \otimes 1_A) 
(1_{A\otimes A}\otimes m(1_A\otimes m))
(1_A\otimes (\sigma(1_A\otimes m) )\otimes 1_{A\otimes A})
(1_{A\otimes A\otimes A}\otimes\sigma\otimes 1_A)
\\
&=
(m \otimes m) 
(1_A\otimes\sigma\otimes 1_A)
(1_{A\otimes A}\otimes ((m\otimes m)(1_A\otimes\sigma\otimes 1_A))),
\end{align*}
which is exactly the right-hand-side of \eqref{eq:mAotimesA}
with the aid of \eqref{eq:mAotimesA1}.
This completes the proof.
\end{proof}
The monoid in the above proposition is called a twisted monoid
and denoted by $A\underset{\mathrm{tw}}{\otimes}A$.
\section{Left modules of monoids}
\label{section:leftmod}
In this section, we introduce the notion of left modules of monoids
in an arbitrary tensor category $\mathcal{C}$
(Cf.\ \cite[Sections 7 and 8]{decommer}).

Let $(A, m, \eta)$ be a monoid in $\mathcal{C}$
(Definition \ref{def:monoid}).
\begin{definition}
An object $X$ of $\mathcal{C}$ with 
a morphism $m_X: A\otimes X\to X$
is a left $A$-module in $\mathcal{C}$,
iff the morphisms satisfy
\eqref{eq:leftmod2}
and
\begin{equation}
m_X(m\otimes 1_X)=m_X(1_A\otimes m_X)a_{AAX}.
\label{eq:leftmod1}
\end{equation}
\end{definition}

We note that \eqref{eq:leftmodmY} holds, if $(Y, m_Y)$
is a left $A$-module in $\mathcal{C}$.
\begin{proposition}\label{prop:mYtriv}
For a monoid $(A, m, \eta)$
and its left module $(X, m_X)$ in $\mathcal{C}$,
we set $Y=A\otimes X$.
Then $(Y, m_Y^{\mathrm{triv}})$ is a left $A$-module.
Here, $m_Y^{\mathrm{triv}}: A\otimes Y\to Y\in\setH$ is
defined by $\eqref{eq:mYtriv}$.
\end{proposition}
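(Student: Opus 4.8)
The statement to prove is Proposition~\ref{prop:mYtriv}: for a monoid $(A, m, \eta)$ and a left $A$-module $(X, m_X)$ in $\mathcal{C}$, the pair $(Y, m_Y^{\mathrm{triv}})$ with $Y = A\otimes X$ and $m_Y^{\mathrm{triv}} = (m\otimes 1_X)a_{AAX}^{-1}$ is again a left $A$-module.

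\medskip

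The plan is to verify the two defining axioms of a left module, namely \eqref{eq:leftmod2} (the unit law $m_Y^{\mathrm{triv}}(\eta\otimes 1_Y) = l_Y$) and \eqref{eq:leftmod1} (the associativity law $m_Y^{\mathrm{triv}}(m\otimes 1_Y) = m_Y^{\mathrm{triv}}(1_A\otimes m_Y^{\mathrm{triv}})a_{AAY}$), for the specific morphism $m_Y^{\mathrm{triv}}$ given by \eqref{eq:mYtriv}. The most transparent route is to reduce to the strict case: by Mac Lane's coherence theorem (equivalently, \cite[Chapter XI]{kassel}) every tensor category is tensor-equivalent to a strict one, so it suffices to prove the identities assuming $a$, $l$, $r$ are all identities. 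This is exactly the device already used in the proof of Proposition~\ref{prop:twistedmonnoid}, so it is legitimate to invoke it again. In the strict setting, $m_Y^{\mathrm{triv}}$ simply becomes $m\otimes 1_X : A\otimes(A\otimes X)\to A\otimes X$.

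\medskip

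For the unit law, in the strict case we compute $m_Y^{\mathrm{triv}}(\eta\otimes 1_Y) = (m\otimes 1_X)(\eta\otimes 1_A\otimes 1_X) = (m(\eta\otimes 1_A))\otimes 1_X = 1_A\otimes 1_X = 1_Y$, using the monoid unit axiom \eqref{eq:monoid2} (strict form $m(\eta\otimes 1_A) = 1_A$); and $1_Y = l_Y$ in the strict case. For the associativity law, in the strict case both sides act on $A\otimes A\otimes(A\otimes X) = A\otimes A\otimes A\otimes X$: the left-hand side is $(m\otimes 1_X)(m\otimes 1_A\otimes 1_X) = (m(m\otimes 1_A))\otimes 1_X$, while the right-hand side is $(m\otimes 1_X)(1_A\otimes(m\otimes 1_X)) = (m\otimes 1_X)(1_A\otimes m\otimes 1_X) = (m(1_A\otimes m))\otimes 1_X$; these agree by the monoid associativity axiom \eqref{eq:asslaw} (strict form $m(m\otimes 1_A) = m(1_A\otimes m)$). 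Note that the left $A$-module structure on $X$ is not actually needed for this proposition — only the monoid axioms on $A$ enter, since $m_Y^{\mathrm{triv}}$ only uses $m$ — but the statement is phrased with $(X,m_X)$ present for later use.

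\medskip

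I do not anticipate a genuine obstacle here: once the reduction to the strict case is made, both axioms collapse to the monoid axioms tensored with $1_X$. The only mildly delicate point is bookkeeping the associativity and unit constraints in the non-strict formulation if one insists on carrying them explicitly — one must repeatedly use \cite[Lemma XI.2.2]{kassel} (the identities $l_{A\otimes X} = (l_A\otimes 1_X)a_{IAX}^{-1}$ and the pentagon/triangle consequences) to rewrite composites of $a$'s — but invoking strictification, as the paper does elsewhere, bypasses this entirely. So the proof is essentially a one-line strict-case verification for each of the two module axioms.
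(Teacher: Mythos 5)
Your proposal is correct and follows essentially the same route as the paper: both module axioms for $m_Y^{\mathrm{triv}}$ reduce directly to the monoid axioms \eqref{eq:monoid2} and \eqref{eq:asslaw} tensored with $1_X$. The only difference is presentational — the paper carries the constraints explicitly, disposing of them via the pentagon axiom \eqref{eq:pentagon} and $l_Y=(l_A\otimes 1_X)a_{IAX}^{-1}$, whereas you invoke strictification, a device the paper itself uses in the proof of Proposition \ref{prop:twistedmonnoid} — and your observation that $m_X$ plays no role is accurate.
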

\begin{proof}
Combining $\eqref{eq:mYtriv}$ and $\eqref{eq:asslaw}$
with
the pentagon axiom \eqref{eq:pentagon}
yields
$m_{Y}^{\mathrm{triv}}
(m \otimes 1_{Y}) 
= 
m_{Y}^{\mathrm{triv}}(1_{A} \otimes m_{Y}^{\mathrm{triv}})a_{A A Y}$.
By using $\eqref{eq:mYtriv}$ and $\eqref{eq:monoid2}$, together with 
$l_{Y}=(l_A\otimes 1_X)a_{IAX}^{-1}$
\cite[Lemma XI.2.2]{kassel}, we see at once that 
$m_{Y}^{\mathrm{triv}}(\eta \otimes 1_{Y}) = l_{Y}$.
\end{proof}
\begin{proposition}
For a braided monoid $(A, m, \eta, \sigma)$
$($$\mathrm{Definition}$ $\ref{def:braided}$$)$
and its left module $(X, m_X)$ in $\mathcal{C}$,
we set $Y=A\otimes X$.
Then $(Y, m_Y^{\sigma})$ is a left $A$-module.
Here,
$m_Y^{\sigma}: A\otimes Y\to Y\in\setH$ is defined by $\eqref{eq:mYsigma}$.
\end{proposition}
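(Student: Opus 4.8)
The plan is to check that $(Y,m_Y^{\sigma})$ satisfies the two defining conditions of a left $A$-module, i.e.\ the analogues of \eqref{eq:leftmod2} and \eqref{eq:leftmod1}: $m_Y^{\sigma}(\eta\otimes 1_Y)=l_Y$ and $m_Y^{\sigma}(m\otimes 1_Y)=m_Y^{\sigma}(1_A\otimes m_Y^{\sigma})a_{AAY}$. Exactly as in the proof of Proposition~\ref{prop:twistedmonnoid}, it is enough to treat the strict case, where $a$, $l$, $r$ are identities and $m_Y^{\sigma}=(1_A\otimes m_X)(\sigma\otimes 1_X)$; the general case is recovered by reinstating the coherence isomorphisms and using the pentagon axiom together with $l_Y=(l_A\otimes 1_X)a_{IAX}^{-1}$, in the style of Propositions~\ref{prop:mYtriv} and~\ref{prop:twistedmonnoid}.

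For the unit condition I would write $m_Y^{\sigma}(\eta\otimes 1_Y)=(1_A\otimes m_X)\bigl((\sigma(\eta\otimes 1_A))\otimes 1_X\bigr)$, use \eqref{eq:braidedmonoid3} in the strict form $\sigma(\eta\otimes 1_A)=1_A\otimes\eta$ to replace the inner map, and then collapse $(1_A\otimes m_X)(1_A\otimes\eta\otimes 1_X)=1_A\otimes\bigl(m_X(\eta\otimes 1_X)\bigr)=1_A\otimes 1_X=1_Y$ by the unit axiom \eqref{eq:leftmod2} of $(X,m_X)$.

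The associativity condition is the substantive part. Starting from the left, $m_Y^{\sigma}(m\otimes 1_Y)=(1_A\otimes m_X)\bigl((\sigma(m\otimes 1_A))\otimes 1_X\bigr)$; I would then invoke the braided-monoid relation \eqref{eq:braidedmonoid1} in the form $\sigma(m\otimes 1_A)=(1_A\otimes m)(\sigma\otimes 1_A)(1_A\otimes\sigma)$ to turn this into $(1_A\otimes m_X)(1_A\otimes m\otimes 1_X)(\sigma\otimes 1_A\otimes 1_X)(1_A\otimes\sigma\otimes 1_X)$. Starting from the right, I would unfold $m_Y^{\sigma}(1_A\otimes m_Y^{\sigma})$, slide the factor $\sigma\otimes 1_X$ past $1_{A\otimes A}\otimes m_X$ by the interchange law, and recognise the two nested copies of $m_X$ as $m_X(1_A\otimes m_X)$; applying the module associativity \eqref{eq:leftmod1}, $m_X(1_A\otimes m_X)=m_X(m\otimes 1_X)$, produces exactly the same composite $(1_A\otimes m_X)(1_A\otimes m\otimes 1_X)(\sigma\otimes 1_A\otimes 1_X)(1_A\otimes\sigma\otimes 1_X)$, so the two sides coincide.

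I expect the only real obstacle to be bookkeeping: keeping track of which tensor factors $\sigma$, $m$ and $m_X$ act on once the associativity constraints are suppressed, and making sure the slide of $\sigma\otimes 1_X$ past $1_{A\otimes A}\otimes m_X$ via the interchange law is applied to the correct factors. It is worth noting that neither the braid relation \eqref{eq:braidrel} nor \eqref{eq:braidedmonoid2} enters here; only \eqref{eq:braidedmonoid1} and \eqref{eq:braidedmonoid3} from the braided structure, together with the two axioms of the left $A$-module $(X,m_X)$, are used.
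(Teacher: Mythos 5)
Your proposal is correct and follows essentially the same route as the paper: the unit axiom via \eqref{eq:braidedmonoid3} and \eqref{eq:leftmod2}, and the associativity axiom by matching both sides through \eqref{eq:braidedmonoid1}, the interchange law, and \eqref{eq:leftmod1}. The only cosmetic difference is that the paper carries the associativity and unit constraints explicitly (invoking the pentagon and triangle axioms) rather than reducing to the strict case, but the algebraic content is identical.
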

\begin{proof}
We prove the relations below:
\begin{align}
&m_{Y}^\sigma(m \otimes 1_{Y}) = m_{Y}^\sigma(1_{A} \otimes m_{Y}^\sigma)a_{A A Y};
\label{eq:mYsigmaaction1}
\\
&m_{Y}^\sigma(\eta \otimes 1_{Y}) = l_{Y}.
\label{eq:mYsigmaaction2}
\end{align}

We only prove \eqref{eq:mYsigmaaction1}.
On account of \eqref{eq:mYsigma} and \eqref{eq:leftmod1},
the right-hand-side of \eqref{eq:mYsigmaaction1} is
\begin{align}
& (1_{A} \otimes (m_{X}(1_{A} \otimes m_{X})))
a_{A A Y}(\sigma \otimes 1_{Y})a_{A A Y}^{-1}
\label{eq:pfmYsigma1}
\\
&\quad
(1_{A} \otimes (a_{A A X}(\sigma \otimes 1_{X})a_{A A X}^{-1}))
a_{A A Y} 
\notag
\\
=& (1_{A} \otimes (m_{X}(m \otimes 1_{X})a_{A A X}^{-1}))
a_{A A Y}(\sigma \otimes 1_{Y})a_{A A Y}^{-1}  
\notag 
\\
&\quad
 (1_{A} \otimes (a_{A A X} (\sigma \otimes 1_{X})a_{A A X}^{-1}))
 a_{A A Y}
\notag
\\
=&
(1_A\otimes m_X)a_{AAX}(((1_A\otimes m)a_{AAA}(\sigma\otimes 1_A)
a_{AAA}^{-1}(1_A\otimes\sigma))\otimes 1_X)
\notag
\\
&\quad a_{AA\otimes AX}^{-1}(1_A\otimes a_{AAX}^{-1})a_{AAY}.
\notag
\end{align}
By using \eqref{eq:braidedmonoid1}, we see that the right-hand-side of \eqref{eq:pfmYsigma1} is
\[
(1_{A} \otimes m_{X})a_{A A X}((\sigma (m\otimes 1_A)a_{AAA}^{-1})
\otimes 1_{X})
a_{A A \otimes A X}^{-1}(1_A\otimes a_{AAX}^{-1})a_{AAY} ,
\]
which is exactly
the left-hand-side of \eqref{eq:mYsigmaaction1}
because of the pentagon axiom
\eqref{eq:pentagon} and $Y=A\otimes X$.
%
\end{proof}
\begin{definition}\label{def:braid-commute}
Let $(V, m_V)$ and $(V, m'_V)$ be left modules
of a monoid $(A, m, \eta)$ in $\mathcal{C}$,
and let $\sigma: A\otimes A\to A\otimes A$ be a morphism of $\mathcal{C}$.
A pair $(m_V, m'_V)$
braid-commutes,
iff
$m_V(1_A\otimes m'_V)=m'_V(1_A\otimes m_V)a_{AAV}(\sigma\otimes 1_V)
a_{AAV}^{-1}$.
\end{definition}
\begin{remark}\label{rem:braidcommmYmYtriv}
Let $(X, m_X)$ and $(Y, m_Y)$ be left modules
of a monoid $(A, m, \eta)$ in $\mathcal{C}$,
and let $\sigma: A\otimes A\to A\otimes A$ be a morphism of $\mathcal{C}$.
We note that 
\eqref{eq:braidcommmYmYtriv} holds, if and only if
the pair
$(m_Y, m_Y^{\mathrm{triv}})$ braid-commutes.
\end{remark}
\begin{remark}\label{rem:braidcommmYmYsigma}
Let $(X, m_X)$ and $(Y, m_Y)$ be left modules
of a braided monoid $(A, m, \eta, \sigma)$ in $\mathcal{C}$.
Then \eqref{eq:braidcommmYmYsigma} is a necessary and sufficient condition
for
$(m_Y, m_Y^{\sigma})$ to braid-commute.
\end{remark}

Let $(A, m, \eta, \sigma)$ be a braided monoid in $\mathcal{C}$
and let $(X, m_X)$ be a left $(A, m, \eta)$-module in $\mathcal{C}$.
Write $Y=A\otimes X$.

We will use Theorems \ref{thm:mYthetaY},
\ref{thm:thetaYmY}
and Corollary \ref{cor:mYthetaY}
for the proof of Corollary \ref{cor:equivmYshikaku}.
\begin{theorem}\label{thm:mYthetaY}
Let $(Y, m_Y)$ be a left $(A, m, \eta)$-module in $\mathcal{C}$
satisfying $\eqref{eq:mXmY}$
and
that the pair
$(m_Y, m_Y^{\mathrm{triv}})$ braid-commutes
$($For $m_Y^{\mathrm{triv}}$, see $\eqref{eq:mYtriv}$$)$.
We set 
\begin{equation}\label{eq:theta}
\theta_Y=m_Y^{\mathrm{triv}}(1_A\otimes m_Y)a_{AAY}: 
(A\otimes A)\otimes Y\to Y.
\end{equation}
Then $(Y, \theta_Y)$ is a left module of the twisted monoid
$A\underset{\mathrm{tw}}{\otimes}A$ $($See the proof of $\mathrm{Proposition}$
$\ref{prop:twistedmonnoid}$
below$)$
in 
$\mathcal{C}$ satisfying\/$:$
\begin{align}\label{eq:thetamodule1}
&m_X\theta_Y=m_X(1_A\otimes m_X)a_{AAX}(1_{A\otimes A}\otimes m_X);
\\\label{eq:thetamodule2}
&\theta_Y((1_A\otimes\eta)\otimes 1_Y)=(m\otimes 1_X)a_{AAX}^{-1}
(r_A\otimes 1_Y).
\end{align}
\end{theorem}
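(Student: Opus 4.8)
The plan is to follow the proof of Proposition~\ref{prop:twistedmonnoid}: every identity asserted is an equality of morphisms in $\mathcal{C}$, so it suffices by coherence to verify it when $\mathcal{C}$ is strict, and I would work throughout in that setting, suppressing $a$, $l$, $r$. Then $\theta_Y=m_Y^{\mathrm{triv}}(1_A\otimes m_Y): A\otimes A\otimes Y\to Y$, while $m_{A\otimes A}=(m\otimes m)(1_A\otimes\sigma\otimes 1_A)$ and $\eta_{A\otimes A}=\eta\otimes\eta$ by \eqref{eq:mAotimesA1}, \eqref{eq:etaAotimesA1}, \eqref{eq:mYtriv}. Recall that $A\underset{\mathrm{tw}}{\otimes}A$ is a monoid by Proposition~\ref{prop:twistedmonnoid}, which is the only place the braided-monoid axioms enter, that $(Y,m_Y)$ is a left $(A,m,\eta)$-module by hypothesis, and that $(Y,m_Y^{\mathrm{triv}})$ is one by Proposition~\ref{prop:mYtriv}. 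There are three things to establish: (i) $\theta_Y(m_{A\otimes A}\otimes 1_Y)=\theta_Y(1_{A\otimes A}\otimes\theta_Y)$ together with $\theta_Y(\eta_{A\otimes A}\otimes 1_Y)=1_Y$, i.e.\ that $(Y,\theta_Y)$ is a left $A\underset{\mathrm{tw}}{\otimes}A$-module; (ii) identity \eqref{eq:thetamodule1}; (iii) identity \eqref{eq:thetamodule2}.

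The core is the associativity in (i). Expanding, $\theta_Y(1_{A\otimes A}\otimes\theta_Y)=m_Y^{\mathrm{triv}}(1_A\otimes m_Y)(1_{A\otimes A}\otimes m_Y^{\mathrm{triv}})(1_{A\otimes A\otimes A}\otimes m_Y)$, and the key move is to rewrite the inner block $m_Y(1_A\otimes m_Y^{\mathrm{triv}})$ via the braid-commuting relation \eqref{eq:braidcommmYmYtriv} as $m_Y^{\mathrm{triv}}(1_A\otimes m_Y)(\sigma\otimes 1_Y)$; this is precisely what produces the factor $1_A\otimes\sigma\otimes 1_A$ occurring in $m_{A\otimes A}$. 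After using the interchange law to slide that $\sigma$ leftward and to bring the two copies of $m_Y$ (respectively of $m_Y^{\mathrm{triv}}$) adjacent, the module-associativity law \eqref{eq:leftmod1} for $(Y,m_Y)$ collapses the two $m_Y$'s into a single $m$ followed by one $m_Y$, and the same law for $(Y,m_Y^{\mathrm{triv}})$ collapses the two $m_Y^{\mathrm{triv}}$'s into a single $m$ followed by one $m_Y^{\mathrm{triv}}$; what remains is $m_Y^{\mathrm{triv}}(1_A\otimes m_Y)$ precomposed with $m_{A\otimes A}\otimes 1_Y$, namely $\theta_Y(m_{A\otimes A}\otimes 1_Y)$. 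The unit law is immediate: $\theta_Y(\eta_{A\otimes A}\otimes 1_Y)=m_Y^{\mathrm{triv}}(\eta\otimes m_Y(\eta\otimes 1_Y))=m_Y^{\mathrm{triv}}(\eta\otimes 1_Y)=1_Y$ by the unit condition \eqref{eq:leftmod2} applied to $m_Y$ and to $m_Y^{\mathrm{triv}}$.

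For (ii), substituting \eqref{eq:mYtriv} gives $m_X\theta_Y=m_X(m\otimes 1_X)(1_A\otimes m_Y)$; the module-associativity law \eqref{eq:leftmod1} for $(X,m_X)$ rewrites $m_X(m\otimes 1_X)$ as $m_X(1_A\otimes m_X)$, and then \eqref{eq:mXmY} rewrites the emerging $m_Xm_Y$ as $m_X(1_A\otimes m_X)$, so that $m_X\theta_Y=m_X(1_A\otimes m_X)(1_{A\otimes A}\otimes m_X)$; restoring associators by naturality of $a$ yields \eqref{eq:thetamodule1}. For (iii), I would use naturality of $a$ to move $a_{AAY}$ past $(1_A\otimes\eta)\otimes 1_Y$, then eliminate the $m_Y$-factor by $m_Y(\eta\otimes 1_Y)=l_Y$ (relation \eqref{eq:leftmod2}) and absorb the resulting $1_A\otimes l_Y$ by the triangle axiom \eqref{eq:triangle}, obtaining $m_Y^{\mathrm{triv}}(r_A\otimes 1_Y)$, which is $(m\otimes 1_X)a_{AAX}^{-1}(r_A\otimes 1_Y)$ by \eqref{eq:mYtriv}.

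The step I expect to be the main obstacle is the bookkeeping in (i): keeping the five tensor factors and the (not necessarily invertible) morphism $\sigma$ in their right positions while composing \eqref{eq:braidcommmYmYtriv}, the two instances of \eqref{eq:leftmod1}, and the interchange law in the correct order. Performing the computation in the strict category, as above, makes it routine, and the general statement then follows by coherence, exactly as in the proof of Proposition~\ref{prop:twistedmonnoid}.
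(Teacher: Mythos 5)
Your proposal is correct and follows essentially the same route as the paper's proof: expand $\theta_Y(1_{A\otimes A}\otimes\theta_Y)$, apply the braid-commuting relation \eqref{eq:braidcommmYmYtriv} to the middle block $m_Y(1_A\otimes m_Y^{\mathrm{triv}})$ to produce the $\sigma$ of $m_{A\otimes A}$, then collapse the two copies of $m_Y$ and of $m_Y^{\mathrm{triv}}$ via their module-associativity laws, with \eqref{eq:thetamodule1}, \eqref{eq:thetamodule2}, and the unit law handled exactly as you describe. The only difference is presentational: you reduce to the strict case by coherence, whereas the paper carries the associators explicitly here (though it uses the same strictness reduction itself in the proof of Proposition \ref{prop:twistedmonnoid}).
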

\begin{proof}
For simplicity we assume that the tensor category $\mathcal{C}$ is strict \cite[Definition XI.2.1]{kassel}.
In order to prove that $(Y, \theta_Y)$ is a left module of
$A\underset{\mathrm{tw}}{\otimes}A$, we show:
\begin{align}
&\theta_{Y}(m_{A \otimes A} \otimes 1_{Y})=  \theta_{Y}(1_{A \otimes A} \otimes \theta_{Y})
; 
\label{eq:thetaYaction1} 
\\
&\theta_{Y}(\eta_{A \otimes A} \otimes 1_{Y}) = 
1_{Y}.
 \label{eq:thetaYaction2}
\end{align}
For the morphisms $m_{A\otimes A}$ and $\eta_{A\otimes A}$,
see \eqref{eq:mAotimesA1}
and \eqref{eq:etaAotimesA1}.

We can easily show \eqref{eq:thetaYaction2}
by means of
\eqref{eq:theta},
Proposition \ref{prop:mYtriv}, 
and the fact that $(Y, m_Y)$
is a left $(A, m, \eta)$-module.

We next prove \eqref{eq:thetaYaction1}.
On account of
Remark \ref{rem:braidcommmYmYtriv}
and 
\eqref{eq:theta}, 
the right-hand-side of \eqref{eq:thetaYaction1} is  
\begin{align}
&m_Y^{\mathrm{triv}} (1_{A} \otimes 
(m_{Y}(1_A\otimes m_Y^{\mathrm{triv}})))
(1_{A \otimes A} \otimes 1_{A} \otimes m_{Y})
\label{eq:pfthetaYaction1}
\\
=& m_Y^{\mathrm{triv}}(1_{A} \otimes (m_Y^{\mathrm{triv}}
(1_{A} \otimes m_{Y})
(\sigma \otimes 1_{Y})
)) 
\notag 
(1_{A \otimes A} \otimes 1_{A} \otimes m_{Y}
)
\\
=& m_Y^{\mathrm{triv}}(1_A\otimes m_Y^{\mathrm{triv}}) 
(1_A\otimes 1_A\otimes(m_Y(1_A\otimes m_Y)))
(1_A\otimes \sigma \otimes 1_{A\otimes Y}).
\notag
\end{align}
Because $(Y, m_Y)$ and
$(Y, m_Y^{\mathrm{triv}})$
are left $(A, m, \eta)$-modules
(See Proposition \ref{prop:mYtriv}), 
the right-hand-side of \eqref{eq:pfthetaYaction1} is
\begin{align*}
&m_Y^{\mathrm{triv}}(m \otimes 1_{Y}) 
(1_{A} \otimes 1_{A} \otimes (m_{Y}(m\otimes 1_Y)
))
(1_A\otimes \sigma \otimes 1_{A \otimes Y})
\\
=&m_Y^{\mathrm{triv}}(1_A\otimes m_Y)(m \otimes 1_{A\otimes Y})
(1_{A} \otimes 1_{A} \otimes m \otimes 1_{Y}
)
 (1_A\otimes \sigma \otimes 1_{A \otimes Y}),
\end{align*}
which is exactly
\begin{align*}
&\theta_Y
(m \otimes 1_{A\otimes Y})
(1_{A} \otimes 1_{A} \otimes m \otimes 1_{Y}
)
 (1_A\otimes \sigma \otimes 1_{A \otimes Y})
\\
=&
\theta_Y
(m\otimes m \otimes 1_Y)
 (1_A\otimes \sigma \otimes 1_{A \otimes Y})
\end{align*}
in view of \eqref{eq:theta}.
\eqref{eq:thetaYaction1} thus holds.

From \eqref{eq:mXmY}, \eqref{eq:mYtriv},  \eqref{eq:leftmod1}, and \eqref{eq:theta}, 
we can show \eqref{eq:thetamodule1}.
%
By means of \eqref{eq:mYtriv},
 \eqref{eq:theta},
 and the fact that $(Y, m_Y)$ is a left $(A, m, \eta)$-module,
we see that \eqref{eq:thetamodule2} holds.
The proof is therefore complete.
\end{proof}
\begin{theorem}\label{thm:thetaYmY}
Let $(Y, \theta_Y)$ be a left $A\underset{\mathrm{tw}}{\otimes}A$-module
satisfying $\eqref{eq:thetamodule1}$
and $\eqref{eq:thetamodule2}$.
Write
\begin{equation}\label{eq:mYtensor}
m_Y=\theta_Y(((\eta\otimes 1_A)l_A^{-1})\otimes 1_Y): A\otimes Y\to Y.
\end{equation}
Then $(Y, m_Y)$ is a left $A$-module in 
$\mathcal{C}$ satisfying
$\eqref{eq:mXmY}$
and that
the pair $(m_Y, m_Y^{\mathrm{triv}})$ braid-commutes.
\end{theorem}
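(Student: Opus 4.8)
This is the converse of Theorem~\ref{thm:mYthetaY}, and the plan is to exhibit $m_Y$ as the restriction, along a monoid morphism $\iota\colon A\to A\underset{\mathrm{tw}}{\otimes}A$, of the given $A\underset{\mathrm{tw}}{\otimes}A$-action $\theta_Y$, and to reduce to the strict case exactly as in the proof of Proposition~\ref{prop:twistedmonnoid}. Put $\iota:=(\eta\otimes 1_A)l_A^{-1}\colon A\to A\otimes A$; then \eqref{eq:mYtensor} reads $m_Y=\theta_Y(\iota\otimes 1_Y)$. Assuming $\mathcal{C}$ strict, $\iota$ becomes the morphism $a\mapsto(\eta,a)$, while $m_Y^{\mathrm{triv}}=(m\otimes 1_X)a_{AAX}^{-1}$ becomes $m\otimes 1_X$ and, by hypothesis \eqref{eq:thetamodule2}, equals $\theta_Y((1_A\otimes\eta)\otimes 1_Y)(r_A^{-1}\otimes 1_Y)$, i.e.\ ``$\theta_Y$ with the unit $\eta$ plugged into the second tensor factor of $A\otimes A$''. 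Thus both $m_Y$ and $m_Y^{\mathrm{triv}}$ arise from $\theta_Y$ by inserting $\eta$ into one of the two slots of $A\otimes A$, and each relation to be proved will be converted, by the $A\underset{\mathrm{tw}}{\otimes}A$-module axioms for $\theta_Y$, into an identity for $m_{A\otimes A}$ (see \eqref{eq:mAotimesA1}) and $\eta_{A\otimes A}$ (see \eqref{eq:etaAotimesA1}) after such insertions.

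First I would check that $(Y,m_Y)$ is a left $(A,m,\eta)$-module. For the unit axiom, $\iota\eta=\eta_{A\otimes A}$ follows from \eqref{eq:etaAotimesA1} and naturality of $l$, so $m_Y(\eta\otimes 1_Y)=\theta_Y(\eta_{A\otimes A}\otimes 1_Y)=l_Y$ by the module-unit axiom for $\theta_Y$. For \eqref{eq:leftmodmY} the key point is that $\iota$ is a monoid morphism $(A,m,\eta)\to A\underset{\mathrm{tw}}{\otimes}A$: evaluating $m_{A\otimes A}(\iota\otimes\iota)$ from \eqref{eq:mAotimesA1} and using $\sigma(1_A\otimes\eta)=(\eta\otimes 1_A)l_A^{-1}r_A$ (\eqref{eq:braidedmonoid4}) together with the unit law \eqref{eq:monoid2} for $m$ gives $m_{A\otimes A}(\iota\otimes\iota)=\iota m$. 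Then \eqref{eq:leftmodmY} is a restriction-of-scalars computation: $m_Y(m\otimes 1_Y)=\theta_Y((\iota m)\otimes 1_Y)=\theta_Y(m_{A\otimes A}\otimes 1_Y)((\iota\otimes\iota)\otimes 1_Y)=\theta_Y(1_{A\otimes A}\otimes\theta_Y)a_{A\otimes A A\otimes A Y}((\iota\otimes\iota)\otimes 1_Y)=m_Y(1_A\otimes m_Y)a_{AAY}$, the last two steps using \eqref{eq:thetaYaction1} and naturality of $a$.

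For \eqref{eq:mXmY}, I would compose \eqref{eq:thetamodule1} on the right with $\iota\otimes 1_Y$ and simplify using $a_{AAX}(\iota\otimes 1_X)=(\eta\otimes 1_Y)l_Y^{-1}$ (which comes from $l_{A\otimes X}=(l_A\otimes 1_X)a_{IAX}^{-1}$), obtaining $m_Xm_Y=m_X(1_A\otimes m_X)(\eta\otimes 1_Y)l_Y^{-1}(1_A\otimes m_X)$; then \eqref{eq:leftmod2}, i.e.\ $m_X(\eta\otimes 1_X)=l_X$, together with naturality of $l$ collapses this to $m_X(1_A\otimes m_X)$. For the braid-commuting relation \eqref{eq:braidcommmYmYtriv} --- equivalently, by Remark~\ref{rem:braidcommmYmYtriv} (applicable once $(Y,m_Y)$ is known to be a module), that $(m_Y,m_Y^{\mathrm{triv}})$ braid-commutes --- I would rewrite each side through $m_Y=\theta_Y(\iota\otimes 1_Y)$, the description of $m_Y^{\mathrm{triv}}$ above, and two applications of the $\theta_Y$-module axiom. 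In the strict picture this shows the left-hand side sends $((a,b),y)$ to $\theta_Y\bigl(m_{A\otimes A}((\eta,a),(b,\eta)),\,y\bigr)$ and the right-hand side to $\theta_Y\bigl(m_{A\otimes A}((p,\eta),(\eta,q)),\,y\bigr)$ with $(p,q)=\sigma(a,b)$; evaluating $m_{A\otimes A}$ from \eqref{eq:mAotimesA1}, using the unit law \eqref{eq:monoid2} and $\sigma(\eta\otimes 1_A)=(1_A\otimes\eta)r_A^{-1}l_A$ (\eqref{eq:braidedmonoid3}), both reduce to $\sigma(a,b)$, so both sides equal $\theta_Y(\sigma\otimes 1_Y)$ modulo the associators $a_{AAY}^{\pm 1}$.

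The step I expect to be the main obstacle is this last one: turning the two ``insert $\eta$, then apply the module axiom'' reductions into genuinely arrow-theoretic equalities forces one to chase the associativity and unit constraints through the rather intricate formula \eqref{eq:mAotimesA1} for $m_{A\otimes A}$, which is precisely why I would first pass to a strict $\mathcal{C}$ as in Proposition~\ref{prop:twistedmonnoid}. After that reduction, the only substantive ingredients are the unit law \eqref{eq:monoid2}, the braiding-unit relations \eqref{eq:braidedmonoid3}--\eqref{eq:braidedmonoid4}, the two hypotheses \eqref{eq:thetamodule1}--\eqref{eq:thetamodule2}, and the module relation \eqref{eq:leftmod2}.
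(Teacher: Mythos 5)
Your proposal is correct and follows essentially the same route as the paper: both reduce every composite of $m_Y$ and $m_Y^{\mathrm{triv}}$ to a single application of $\theta_Y$ via the $A\underset{\mathrm{tw}}{\otimes}A$-module axiom, and then evaluate $m_{A\otimes A}$ on unit-inserted tensors using \eqref{eq:monoid2} and the braiding-unit relations \eqref{eq:braidedmonoid3}--\eqref{eq:braidedmonoid4} (the paper packages this evaluation as the identity \eqref{eq:msigmaetar}). Your framing of the module axiom step as ``$\iota=(\eta\otimes 1_A)l_A^{-1}$ is a monoid morphism, so $m_Y$ is restriction of scalars'' is a tidier way of organizing the same computation, and the reduction to a strict $\mathcal{C}$ is a presentational choice the paper itself makes elsewhere (Proposition \ref{prop:twistedmonnoid}) but not here.
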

\begin{proof}
We assume that the tensor category $\mathcal{C}$ is strict.
On account of \eqref{eq:leftmod2}
and
\eqref{eq:thetamodule1}, 
it is a simple matter to show \eqref{eq:mXmY}.

We next prove that $(Y, m_Y)$ is a left $A$-module.
For this purpose, we show the relation below only.
\begin{equation}
m_{Y}(m \otimes 1_{Y}) = m_{Y}(1_{A} \otimes m_{Y})
.\label{eq:pfmYaction1}
\end{equation}

On account of \eqref{eq:mAotimesA1} and the fact that $(Y, \theta_Y)$ is a left $A\underset{\mathrm{tw}}{\otimes}A$-module,
the right-hand-side of \eqref{eq:pfmYaction1} is
\begin{align}\label{eq:pfmYisaction1}
& \theta_{Y} (1_{A \otimes A} \otimes \theta_{Y})  
((\eta \otimes 1_{A})
\otimes 1_A\otimes 1_{A \otimes Y}) 
 ((1_{A} \otimes \eta) \otimes 1_{A \otimes Y})
  \\
=& \theta_{Y} (m_{A \otimes A} \otimes 1_{Y}) 
((\eta \otimes 1_{A})
\otimes 1_A\otimes 1_{A \otimes Y}) 
  ((1_{A} \otimes \eta) \otimes 1_{A \otimes Y})
  \notag
  \\
=& \theta_{Y}(1_{A} \otimes m \otimes 1_{Y})
(m\otimes 1_A
\otimes 1_A\otimes 1_Y)
\notag
\\
&\quad
(((1_A\otimes\sigma)
(1_{A\otimes A}\otimes\eta))
\otimes 1_{A \otimes Y})
((\eta\otimes 1_A)
\otimes 1_{A \otimes Y}).
\notag
\end{align}

From
\eqref{eq:monoid2}
and
\eqref{eq:braidedmonoid4},
\begin{equation}
(m\otimes 1_A)
(1_A\otimes \sigma)
(1_{A\otimes A}\otimes\eta)=
1_{A \otimes A}.
\label{eq:msigmaetar}
\end{equation}
The right-hand-side of $\eqref{eq:pfmYisaction1}$ is hence
$\theta_{Y} (1_{A} \otimes m \otimes 1_{Y}) 
(\eta\otimes 1_A
\otimes 
1_{A} \otimes 1_{Y}),$
%
which 
is exactly the same as the left-hand-side of \eqref{eq:pfmYaction1}.

The task is now to prove that the pair $(m_Y, m_Y^{\mathrm{triv}})$ 
braid-commutes. 
For the proof,
we show that either side of \eqref{eq:braidcommmYmYtriv}
is
$\theta_Y(\sigma\otimes 1_Y)
$.

It follows from $\eqref{eq:thetamodule2}$
that
\begin{equation}
m_{Y}^{\mathrm{triv}} = \theta_{Y} ((1_{A} \otimes \eta)
\otimes 1_{Y}),
\label{eq:mYtrivexp}
\end{equation} 
and
the right-hand-side of \eqref{eq:braidcommmYmYtriv} is
consequently
\begin{align}
\label{eq:pfmYmYtrivbraidcomm1}
 &\theta_{Y} ((1_{A} \otimes \eta)
  \otimes 1_{Y})  
 (1_{A} \otimes (\theta_{Y}((\eta \otimes 1_{A})
  \otimes 1_{Y}))) 
%
(\sigma \otimes 1_{Y}) 
 \\
=& \theta_{Y} (1_{A \otimes A} \otimes  \theta_{Y})
((1_{A} \otimes \eta)
 \otimes 1_{A \otimes A \otimes Y})  
\notag
(1_{A} \otimes (\eta \otimes 1_{A})
 \otimes 1_{Y})
  (\sigma \otimes 1_{Y}) 
\\
=& \theta_{Y} (m_{A \otimes A} \otimes 1_{Y})
  ((1_{A} \otimes \eta)
  \otimes 1_{A \otimes A \otimes Y}) 
  (1_{A} \otimes (\eta \otimes 1_{A})
   \otimes 1_{Y})
  (\sigma \otimes 1_{Y}) 
\notag
\\
=&
\theta_Y
(m\otimes m\otimes 1_Y)
\notag
(((1_A\otimes\sigma)
(1_{A}\otimes (1_{A} \otimes \eta)))
\otimes 1_A\otimes 1_Y)
\notag
\\
&\quad
((1_A\otimes\eta)
\otimes
1_{A}
 \otimes 1_{Y})
(\sigma\otimes 1_Y)
  \notag
\end{align}
because of \eqref{eq:mAotimesA1}
and the fact that 
$\theta_{Y}$ is a left $A\underset{\mathrm{tw}}{\otimes}A$-module. 

From \eqref{eq:monoid2}
and
\eqref{eq:msigmaetar},
the right-hand-side of \eqref{eq:pfmYmYtrivbraidcomm1} is
\begin{align*}
&
\theta_Y(1_A\otimes m\otimes 1_Y)
((1_A\otimes\eta)
\otimes
1_{A}
\otimes 1_Y)
(\sigma\otimes 1_Y)
\\
=&
\theta_Y(1_A\otimes (m(\eta\otimes 1_A))\otimes 1_Y)
(\sigma\otimes 1_Y)
\\
=&
\theta_Y
(\sigma\otimes 1_Y).
\end{align*}

On account of
\eqref{eq:mYtrivexp}
and the fact that
$m_Y=\theta_Y((\eta\otimes 1_A)
\otimes 1_Y)$,
the left-hand-side of \eqref{eq:braidcommmYmYtriv} is
$\theta_{Y}(1_{A \otimes A} \otimes \theta_{Y})
((\eta \otimes 1_{A})
 \otimes 
 ((1_{A} \otimes \eta)
 \otimes 1_{Y}))$,
which is
\begin{align}
& \theta_{Y}(m_{A\otimes A} \otimes 1_{Y})
((\eta \otimes 1_{A})
\otimes 
((1_{A} \otimes \eta)
 \otimes 1_{Y}))
\label{eq:pfmYmYtrivbraidcomm3}
 \\
=&
\theta_{Y}(((m (\eta\otimes 1_A))\otimes (m(1_A\otimes\eta))) \otimes 1_Y)
(\sigma
\otimes 1_Y)
\notag
\\
=&
\theta_{Y}
(\sigma
\otimes 1_Y)
\notag
\end{align}
owing to
\eqref{eq:monoid2},
\eqref{eq:mAotimesA1},
and the fact that
$\theta_{Y}$ is a left $A\underset{\mathrm{tw}}{\otimes}A$-module.
This completes the proof.
\end{proof}
\begin{corollary}\label{cor:mYthetaY}
The correspondence in Theorem $\ref{thm:mYthetaY}$
is the inverse of that in Theorem $\ref{thm:thetaYmY}$
and vice versa.
\end{corollary}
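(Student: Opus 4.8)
The plan is to show that the assignment $(Y,m_Y)\mapsto(Y,\theta_Y)$ of Theorem~\ref{thm:mYthetaY}, with $\theta_Y$ given by \eqref{eq:theta}, and the assignment $(Y,\theta_Y)\mapsto(Y,m_Y)$ of Theorem~\ref{thm:thetaYmY}, with $m_Y$ given by \eqref{eq:mYtensor}, are mutually inverse. Since Theorems~\ref{thm:mYthetaY} and \ref{thm:thetaYmY} already guarantee that each assignment takes values in the stated class, it suffices to verify the two composite identities: $\theta_Y(((\eta\otimes 1_A)l_A^{-1})\otimes 1_Y)=m_Y$ when $\theta_Y=m_Y^{\mathrm{triv}}(1_A\otimes m_Y)a_{AAY}$, and $m_Y^{\mathrm{triv}}(1_A\otimes m_Y)a_{AAY}=\theta_Y$ when $m_Y=\theta_Y(((\eta\otimes 1_A)l_A^{-1})\otimes 1_Y)$. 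Both are coherence/naturality computations, and, just as in the proof of Proposition~\ref{prop:twistedmonnoid}, I would reduce to the strict case via Mac Lane's coherence theorem and only indicate the constraints in the general argument.

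For the first identity I would substitute \eqref{eq:theta} and move $((\eta\otimes 1_A)l_A^{-1})\otimes 1_Y$ leftward step by step. Naturality of $a$ gives $a_{AAY}((\eta\otimes 1_A)\otimes 1_Y)=(\eta\otimes 1_{A\otimes Y})a_{IAY}$; the interchange law gives $(1_A\otimes m_Y)(\eta\otimes 1_{A\otimes Y})=(\eta\otimes 1_Y)(1_I\otimes m_Y)$; the left-module identity \eqref{eq:leftmod2} for $(Y,m_Y^{\mathrm{triv}})$ (Proposition~\ref{prop:mYtriv}) gives $m_Y^{\mathrm{triv}}(\eta\otimes 1_Y)=l_Y$; and naturality of $l$ turns $l_Y(1_I\otimes m_Y)$ into $m_Y\,l_{A\otimes Y}$. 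Finally $l_{A\otimes Y}=(l_A\otimes 1_Y)a_{IAY}^{-1}$ \cite[Lemma~XI.2.2]{kassel} makes the remaining factors cancel, leaving exactly $m_Y$.

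For the second identity, set $\phi_1=(1_A\otimes\eta)r_A^{-1}$ and $\phi_2=(\eta\otimes 1_A)l_A^{-1}$. I would first invoke \eqref{eq:mYtrivexp}, valid because $\theta_Y$ satisfies \eqref{eq:thetamodule2}, to write $m_Y^{\mathrm{triv}}=\theta_Y(\phi_1\otimes 1_Y)$; together with $m_Y=\theta_Y(\phi_2\otimes 1_Y)$ this presents the composite as $\theta_Y(\phi_1\otimes 1_Y)(1_A\otimes\theta_Y)(1_A\otimes(\phi_2\otimes 1_Y))a_{AAY}$. The interchange law rewrites $(\phi_1\otimes 1_Y)(1_A\otimes\theta_Y)$ as $(1_{A\otimes A}\otimes\theta_Y)(\phi_1\otimes 1_{(A\otimes A)\otimes Y})$, whence the $A\underset{\mathrm{tw}}{\otimes}A$-module axiom \eqref{eq:thetaYaction1} converts $\theta_Y(1_{A\otimes A}\otimes\theta_Y)$ into $\theta_Y(m_{A\otimes A}\otimes 1_Y)a_{A\otimes A A\otimes A Y}^{-1}$; and $a_{A\otimes A A\otimes A Y}^{-1}(\phi_1\otimes 1_{(A\otimes A)\otimes Y})(1_A\otimes(\phi_2\otimes 1_Y))a_{AAY}$ collapses to $(\phi_1\otimes\phi_2)\otimes 1_Y$ by two applications of naturality of $a$ and the interchange law. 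Thus the composite equals $\theta_Y((m_{A\otimes A}(\phi_1\otimes\phi_2))\otimes 1_Y)$.

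The remaining point, $m_{A\otimes A}(\phi_1\otimes\phi_2)=1_{A\otimes A}$, is the main obstacle, as it is where the braided-monoid structure enters. Expanding $m_{A\otimes A}$ from \eqref{eq:mAotimesA1}, the factor $\phi_1\otimes\phi_2$ feeds a unit into each of the two inner slots surrounding $\sigma$; by \eqref{eq:braidedmonoid3} and \eqref{eq:braidedmonoid4} the map $\sigma$ is the identity up to unit constraints on such inputs, so $\sigma(\eta\otimes\eta)$ reduces to $\eta\otimes\eta$, and the two copies of $m$ then absorb these units via \eqref{eq:monoid2}, leaving $1_{A\otimes A}$. The only delicacy is tracking $a$, $l$, $r$; in the strict case (legitimate by coherence, exactly as in the proof of Proposition~\ref{prop:twistedmonnoid}) this is a one-line check. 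With $m_{A\otimes A}(\phi_1\otimes\phi_2)=1_{A\otimes A}$ established, the composite in the previous paragraph is $\theta_Y$, and the proof is complete.
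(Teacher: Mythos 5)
Your proposal is correct and follows essentially the same route as the paper: the composite $m_Y^{\mathrm{triv}}(1_A\otimes m_Y)a_{AAY}=\theta_Y$ is reduced, exactly as in the paper, to the left $A\underset{\mathrm{tw}}{\otimes}A$-module axiom for $\theta_Y$ together with the unit identities for $\sigma$ and $m$ (the paper packages your computation $m_{A\otimes A}(\phi_1\otimes\phi_2)=1_{A\otimes A}$ via its auxiliary identity \eqref{eq:msigmaetar}). The only difference is one of completeness: the paper verifies only this composite, declaring it "a left inverse" and leaving the other direction to the reader, whereas you also carry out the (easier) check $\theta_Y(((\eta\otimes 1_A)l_A^{-1})\otimes 1_Y)=m_Y$, which indeed needs nothing beyond $m_Y^{\mathrm{triv}}(\eta\otimes 1_Y)=l_Y$ and naturality.
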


The proof of this corollary is straightforward.
\section{Construction}
\label{section:construction}
This section deals with the construction of dynamical reflection maps
by means of results in the preceding sections.
In this paper, we focus on the following
solutions to the braid relation \eqref{eq:braidrel} in the tensor category
$\setH$,
called
dynamical Yang-Baxter maps
\cite{shibu1,shibu2}.
\begin{remark}
The dynamical Yang-Baxter map in this paper
is called a dynamical braiding map in \cite{shibu1,shibu2}.
\end{remark}

Let 
$L$ be a set with a binary operation
$\cdot: L\times L\ni (a, b)\mapsto ab\in L$
and an element $e_L(\in L)$.
\begin{definition}\label{def:leftquasig}
The triplet $(L, \cdot, e_L)$ is a left quasigroup with a unit,
iff
there uniquely exists the element $b\in L$ such that $ab=c$
for all $a, c\in L$
and
$ae_L=e_L a=a$
for every $a\in L$.
\end{definition}
For $a, c\in L$,
let $a\backslash c$ denote the unique element $b\in L$
satisfying $ab=c$.
Hence,
$a(a\backslash c)=c$
and
$a\backslash(ac)=c$.

Every group is a left quasigroup with a unit.
However, there exists a left quasigroup with a unit that is not associative.
\begin{example}\label{leftquasiexample}
Let 
$L = \{e_{L}, l_{1}, l_{2}, l_{3}, l_{4}, l_{5} \}$
denote the set of six elements with the binary operation
$\cdot: L \times L \to L$
defined by Table 1. Here, $l_{2} \cdot l_{3} = l_{1}$ and $l_{3} \cdot l_{2} = e_{L}$. 
This 
$(L, \cdot, e_{L})$ 
is a left quasigroup with a unit $e_{L}$. 
Because $(l_1 \cdot l_2) \cdot l_3 = l_2$ and  $\ l_1 \cdot (l_2 \cdot l_3) = l_5$, 
it is not associative. 
\begin{table}[t]
\begin{center} 
  \begin{tabular}{l | r  r  r  r  r  r } 
\hline
    & $e_{L}$ & $l_{1}$ & $l_{2}$ & $l_{3}$ & $l_{4}$ & $l_{5}$ \\  \hline
   $e_{L}$ & $e_{L}$ & $l_{1}$ & $l_{2}$ & $l_{3}$ & $l_{4}$   & $l_{5}$\\ 
   $l_{1}$ & $l_{1}$ & $l_{5}$ & $l_{3}$ & $l_{4}$ & $l_{2}$   & $e_{L}$\\  
   $l_{2}$ & $l_{2}$ & $l_{3}$ & $l_{5}$ & $l_{1}$ & $e_{L}$   & $l_{4}$ \\ 
   $l_{3}$ & $l_{3}$ & $l_{4}$ & $e_{L}$ & $l_{2}$ & $l_{5}$ & $l_{1}$ \\ 
   $l_{4}$ & $l_{4}$ & $e_{L}$ & $l_{1}$ & $l_{5}$ & $l_{3}$ & $l_{2}$ \\ 
   $l_{5}$ & $l_{5}$ & $l_{2}$ & $l_{4}$ & $e_{L}$ & $l_{1}$ & $l_{3}$ \\ \hline
  \end{tabular}
\caption{The binary operation on $L$}
\end{center}
\end{table}
\end{example}
We write $H=L$.
\begin{proposition}
$(L, \cdot)\in\setH$.
\end{proposition}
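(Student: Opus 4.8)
The plan is simply to unwind the definition of an object of $\setH$ from Section~\ref{section:setH}. Since we have set $H=L$, a pair $(X,\cdot_X)$ is an object of $\setH$ precisely when $X$ is a set and $\cdot_X$ is a map from $L\times X$ to $L$. Here both the base set $H$ and the carrier set are taken to be $L$, and the structure map is the given binary operation $\cdot$, so the only point to check is that $\cdot$ is a well-defined map $L\times L\to L$.

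First I would observe that $(L,\cdot,e_L)$ being a left quasigroup with a unit entails in particular that $\cdot: L\times L\to L$ is a binary operation on $L$; under the identification $H=L$ this is exactly the assertion that $\cdot$ is a map $H\times L\to H$. In fact neither the unique-solvability axiom nor the existence of the unit $e_L$ is needed for this proposition --- only that $\cdot$ takes its values in $L$. Hence $(L,\cdot)$ satisfies the defining condition, and it is an object of $\setH$.

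There is essentially no obstacle here: the content is purely definitional, the sole ``difficulty'' being the bookkeeping remark that in this setup the base set $H$, the carrier set, and the set underlying the binary operation all coincide with $L$, so that $\cdot_X$ is literally the operation $\cdot$. The left-quasigroup-with-unit structure will only come into play in the subsequent sections, where it is used to promote $(L,\cdot)$ to a monoid $(L,m,\eta)$ in $\setH$ (Definition~\ref{def:monoid}).
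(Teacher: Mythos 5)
Your proof is correct and matches the paper, which in fact omits any proof precisely because the statement is definitional: with $H=L$, the binary operation $\cdot:L\times L\to L$ is literally a map $H\times L\to H$, which is all that membership in $\setH$ requires. Your remark that the quasigroup and unit axioms are not needed here is accurate.
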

Let $G$ be a group such that $G\cong L$ as sets
and the map
$\pi: L\to G$ means a set-theoretic bijection.
Let $\mu_1^G: G\times G\times G\to G$
denote the ternary operation
of $G$
defined by
\begin{equation}
\label{eq:mu1G}
\mu_1^G(a, b, c)=
ab^{-1}c
\quad
(a, b, c\in G).
\end{equation}
\begin{remark}
This $\mu_1^G: G\times G\times G\to G$ is a (classical) torsor
\cite[Remark 5.2]{shibukawa}
(For torsors, see \cite[Section 4.2]{kontsevich}
and
\cite[Section 1]{skoda}).
\end{remark}
The maps $\xi_\lambda: L\times L\to L$ and
$\eta_\lambda: L\times L\to L$ $(\lambda\in H)$ are given by
\begin{align}
&\xi_\lambda(a, b)=\lambda\backslash\pi^{-1}(\mu_1^G(\pi(\lambda),
\pi(\lambda a), \pi((\lambda a)b)));
\label{eq:xi}
\\
&\eta_\lambda(a, b)
=
(\lambda\xi_\lambda(a, b))\backslash((\lambda a)b)
\quad(a, b\in L).
\label{eq:eta}
\end{align}
For $\lambda\in H, a, b\in L$, we define the map
$\sigma(\lambda): L\times L\to L\times L$
by
\begin{equation}\label{eq:sigma}
\sigma(\lambda)(a, b)
=
(\xi_\lambda(a, b), \eta_\lambda(a, b)).
\end{equation}

This $\sigma$ is a dynamical Yang-Baxter map;
that is to say,
\begin{proposition}\label{prop:sigmabraid}
$\sigma: L\otimes L\to L\otimes L$
is a solution to the braid relation $\eqref{eq:braidrel}$
in $\setH$.
\end{proposition}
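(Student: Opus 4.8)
The braid relation \eqref{eq:braidrel} in $\setH$ is, since the associativity constraints of $\setH$ merely rebracket the underlying sets, equivalent to a single set-theoretic identity, and my plan is to reduce that identity to a one-line computation inside the group $G$. As a preliminary I would record that $\sigma$ is a morphism of $\setH$: the defining equation \eqref{eq:eta} gives $(\lambda\xi_\lambda(a,b))\,\eta_\lambda(a,b)=(\lambda a)b$, whence $\lambda\cdot_{L\otimes L}\sigma(\lambda)(a,b)=\lambda\cdot_{L\otimes L}(a,b)$ for all $\lambda\in H$ and $a,b\in L$. Unwinding \eqref{eq:braidrel} at a parameter $\lambda\in H$ applied to $((a,b),c)$, and keeping track of the fact that the tensor factor $\sigma$ inside $1_L\otimes\sigma$ is evaluated at the shifted parameter $\lambda\cdot_L(\text{first coordinate})$ whereas $\sigma\otimes 1_L$ uses $\lambda$ unchanged, the relation becomes the equality $\sigma_{12}\sigma_{23}\sigma_{12}=\sigma_{23}\sigma_{12}\sigma_{23}$ of self-maps of $L\times L\times L$ at $\lambda$, where $\sigma_{12}(\lambda)$ applies $\sigma(\lambda)$ to the first two coordinates and $\sigma_{23}(\lambda)$ applies $\sigma$, with parameter $\lambda\cdot_L(\text{current first coordinate})$, to the last two; the outermost associativity constraints carry both sides into the same bracketing, so no information is lost.

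The device that makes this transparent is a change of variables to ``positions''. Fix $\lambda\in L$, put $x_0=\lambda$, and to $(a,b,c)\in L^3$ associate
\[
x_1=\lambda a,\qquad x_2=(\lambda a)b,\qquad x_3=((\lambda a)b)c;
\]
since $L$ is a left quasigroup with a unit, $(a,b,c)\mapsto(x_1,x_2,x_3)$ is a bijection of $L^3$ (its inverse is built from $\backslash$). Writing $\tilde\mu(p,q,r):=\pi^{-1}\bigl(\mu_1^G(\pi(p),\pi(q),\pi(r))\bigr)$, a direct reading of \eqref{eq:xi}, \eqref{eq:eta}, \eqref{eq:sigma} shows that in these coordinates $\sigma_{12}(\lambda)$ becomes $(x_1,x_2,x_3)\mapsto(\tilde\mu(x_0,x_1,x_2),\,x_2,\,x_3)$ and $\sigma_{23}(\lambda)$ becomes $(x_1,x_2,x_3)\mapsto(x_1,\,\tilde\mu(x_1,x_2,x_3),\,x_3)$: each elementary move simply overwrites the ``middle'' position by the torsor of its two neighbours, leaving $x_0$ and $x_3$ untouched. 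Transporting everything through $\pi$ and setting $X_i=\pi(x_i)$, the two moves become $(X_1,X_2,X_3)\mapsto(X_0X_1^{-1}X_2,X_2,X_3)$ and $(X_1,X_2,X_3)\mapsto(X_1,X_1X_2^{-1}X_3,X_3)$ in $G^3$.

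It then suffices to check, using only the group axioms, that both triple composites send $(X_1,X_2,X_3)$ to $(X_0X_2^{-1}X_3,\,X_0X_1^{-1}X_3,\,X_3)$; this is an immediate cancellation computation. Undoing the change of variables — the parameter $x_0=\lambda$ is fixed throughout, so agreement of the $x_i$ forces agreement of the corresponding triples $(a,b,c)$ — yields \eqref{eq:braidrel}. I expect the only genuine work to be the bookkeeping in the second paragraph: matching the formulas \eqref{eq:xi}--\eqref{eq:sigma} together with the parameter shift of $1_L\otimes\sigma$ against the ``overwrite the middle position'' description, and confirming that the associativity constraints of $\setH$ indeed contribute nothing; once this is in place the group computation is trivial. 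Alternatively, one may simply invoke the general fact that a ternary operation satisfying the torsor identities produces a solution to the braid relation in $\setH$ (\cite{shibu2}), together with the observation made after \eqref{eq:mu1G} that $\mu_1^G$ is such a torsor.
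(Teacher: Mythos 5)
Your proposal is correct, and your primary route is genuinely different from (and more self-contained than) the paper's. The paper does not verify the braid relation directly: immediately after the proposition it records the two identities satisfied by the ternary operation $\mu_1^G(a,b,c)=ab^{-1}c$, namely $\mu_1^G(a,\mu_1^G(a,b,c),\mu_1^G(\mu_1^G(a,b,c),c,d))=\mu_1^G(a,b,\mu_1^G(b,c,d))$ and $\mu_1^G(\mu_1^G(a,b,c),c,d)=\mu_1^G(\mu_1^G(a,b,\mu_1^G(b,c,d)),\mu_1^G(b,c,d),d)$, and then invokes the general theorems of \cite{shibu2} and \cite{matsumoto2} that any such ternary operation produces a dynamical Yang-Baxter map --- which is exactly the fallback you offer in your last sentence. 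Your main argument instead carries out the verification from scratch via the change of variables $(a,b,c)\mapsto(x_1,x_2,x_3)=(\lambda a,(\lambda a)b,((\lambda a)b)c)$, under which both elementary moves become ``overwrite the middle position by the torsor of its neighbours'' and the braid relation collapses to the cancellation computation in $G$ with common value $(X_0X_2^{-1}X_3,\,X_0X_1^{-1}X_3,\,X_3)$. I checked the translation of \eqref{eq:xi}--\eqref{eq:sigma} into position coordinates, including the parameter shift $\lambda\mapsto\lambda a$ hidden in $1_L\otimes\sigma$ and the fact that each move fixes $x_0$ and $x_3$, as well as the final group computation; all of it is correct. What your route buys is a proof readable without consulting the cited references and an explanation of \emph{why} the relation holds (the position picture makes the two sides visibly symmetric); what the paper's route buys is brevity and reuse of a general criterion applicable to other ternary operations. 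Either is acceptable here.
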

Because
the ternary operation
$\mu_1^G$
satisfies
\begin{align*}
&\mu_1^G(a, \mu_1^G(a, b, c), \mu_1^G(\mu_1^G(a, b, c), c, d))
=\mu_1^G(a, b, \mu_1^G(b, c, d)),
\\
&\mu_1^G(\mu_1^G(a, b, c), c, d)=
\mu_1^G(\mu_1^G(a, b, \mu_1^G(b, c, d)), \mu_1^G(b, c, d), d)
\end{align*}
for all $a, b, c, d\in G$,
the morphism $\sigma$ 
\eqref{eq:sigma}
satisfies the braid relation $\eqref{eq:braidrel}$
in $\setH$
(See \cite[Theorem 3.3]{matsumoto2} and \cite[Theorem 3.2]{shibu2}).
\begin{remark}
This solution is a dynamical braiding map constructed by
$(G, \mu_1^G)$ in \cite[Section 6]{shibu2}
for the group $G$
(See also \cite[Remark 6.7]{shibu2}).
\end{remark}
\begin{proposition}\label{prop:sigmaisom}
$\sigma: L\otimes L\to L\otimes L$
is an isomorphism
in $\setH$.
\end{proposition}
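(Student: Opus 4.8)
The plan is to reduce the claim to the statement that each component map $\sigma(\lambda)\colon L\times L\to L\times L$ is a bijection, and then to verify this by conjugating $\sigma(\lambda)$ with a convenient bijection of $L\times L$. First I would record the elementary fact that a morphism $f$ of $\setH$ is an isomorphism precisely when every map $f(\lambda)$ is a bijection: given componentwise bijections, the assignment $\lambda\mapsto f(\lambda)^{-1}$ is again a morphism of $\setH$ (the compatibility condition $\lambda\cdot_{X}f(\lambda)^{-1}(y)=\lambda\cdot_{Y}y$ follows by substituting $f(\lambda)^{-1}(y)$ into the compatibility condition for $f$), and it is a two-sided inverse since composition in $\setH$ is taken componentwise. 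So the whole task becomes: for each $\lambda\in H$, show that $\sigma(\lambda)$ is a bijection of $L\times L$.

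Next I would introduce, for a fixed $\lambda$, the map $\Psi_\lambda\colon L\times L\to L\times L$, $\Psi_\lambda(a,b)=(\lambda a,(\lambda a)b)$. Because $(L,\cdot,e_L)$ is a left quasigroup with a unit (Definition \ref{def:leftquasig}), the maps $a\mapsto\lambda a$ and $b\mapsto Xb$ (for fixed $X\in L$) are bijections of $L$, so $\Psi_\lambda$ is a bijection with inverse $(X,Y)\mapsto(\lambda\backslash X,\,X\backslash Y)$. Then I would unwind the definitions \eqref{eq:xi}, \eqref{eq:eta}, \eqref{eq:sigma} and \eqref{eq:mu1G} to compute the conjugate $\Psi_\lambda\circ\sigma(\lambda)\circ\Psi_\lambda^{-1}$: writing $(a,b)=\Psi_\lambda^{-1}(X,Y)$, so that $\lambda a=X$ and $(\lambda a)b=Y$, formula \eqref{eq:xi} gives $\lambda\xi_\lambda(a,b)=\pi^{-1}(\mu_1^G(\pi(\lambda),\pi(X),\pi(Y)))=\pi^{-1}(\pi(\lambda)\pi(X)^{-1}\pi(Y))$, and \eqref{eq:eta} gives $(\lambda\xi_\lambda(a,b))\,\eta_\lambda(a,b)=(\lambda a)b=Y$, so the conjugate is the map $(X,Y)\mapsto(\pi^{-1}(\pi(\lambda)\pi(X)^{-1}\pi(Y)),\,Y)$.

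Finally I would observe that this last map is visibly a bijection: it fixes the second coordinate, and for each fixed $Y$ its first coordinate is $\pi^{-1}\circ\beta\circ\pi$, where $\beta\colon G\to G$, $\beta(g)=\pi(\lambda)\,g^{-1}\pi(Y)$, is a bijection of the group $G$ with inverse $h\mapsto\pi(Y)\,h^{-1}\pi(\lambda)$. Hence $\sigma(\lambda)$, being conjugate to a bijection, is a bijection of $L\times L$, and by the reduction in the first paragraph $\sigma\colon L\otimes L\to L\otimes L$ is an isomorphism in $\setH$. One may even write the componentwise inverse out explicitly: $\sigma^{-1}(\lambda)(a',b')=(a,b)$ with $a=\lambda\backslash\pi^{-1}(\mu_1^G(\pi((\lambda a')b'),\pi(\lambda a'),\pi(\lambda)))$ and $b=(\lambda a)\backslash((\lambda a')b')$.

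I expect the only genuine obstacle to be the bookkeeping in the middle step, namely tracking how the two constituents $\xi_\lambda$ and $\eta_\lambda$ of $\sigma(\lambda)$ interact with $\Psi_\lambda$, using the left-quasigroup identities $a(a\backslash c)=c$ and $a\backslash(ac)=c$ repeatedly. Alternatively, invertibility of this particular dynamical Yang-Baxter map is essentially already contained in the analysis of the dynamical braiding map associated with the torsor $(G,\mu_1^G)$ in \cite{shibu2}, so one could instead simply appeal to that reference; but the conjugation argument above is short and self-contained.
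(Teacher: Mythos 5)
Your proposal is correct. Note that the paper itself gives no argument for Proposition \ref{prop:sigmaisom}: it simply writes down the componentwise inverse $\sigma^{-1}(\lambda)(a,b)=(\lambda\backslash c,\,c\backslash((\lambda a)b))$ with $c=\pi^{-1}(\pi((\lambda a)b)\pi(\lambda a)^{-1}\pi(\lambda))$ and defers to \cite[Proposition 5.1]{shibu1}. Your route supplies what is missing: the reduction of ``isomorphism in $\setH$'' to ``each $\sigma(\lambda)$ is a bijection'' (which the paper leaves implicit, and which does need the small check that $\lambda\mapsto f(\lambda)^{-1}$ is again a morphism), and then the conjugation by $\Psi_\lambda(a,b)=(\lambda a,(\lambda a)b)$, which turns $\sigma(\lambda)$ into the transparently invertible map $(X,Y)\mapsto(\pi^{-1}(\pi(\lambda)\pi(X)^{-1}\pi(Y)),Y)$. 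The bookkeeping in your middle step checks out: $\lambda\xi_\lambda(a,b)=\pi^{-1}(\pi(\lambda)\pi(X)^{-1}\pi(Y))$ and $(\lambda\xi_\lambda(a,b))\eta_\lambda(a,b)=Y$ follow directly from \eqref{eq:xi}, \eqref{eq:eta}, and the left-quasigroup identities, and the explicit inverse you extract at the end agrees with the formula the paper states. What your approach buys is a self-contained derivation of that formula rather than a verification or a citation; what the paper's terser route buys is brevity, at the cost of pushing the actual work into the reference.
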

The inverse $\sigma^{-1}: L\otimes L\to L\otimes L$ is defined by
$\sigma^{-1}(\lambda)(a, b)
=(\lambda\backslash c,
c\backslash
((\lambda a)b))$,
where
$c=\pi^{-1}(\pi((\lambda a)b)\pi(\lambda a)^{-1}\pi(\lambda))$
(Cf.\ \cite[Proposition 5.1]{shibu1}).

We set 
\begin{equation}
\label{eq:m}
m(\lambda)(a, b)=\lambda\backslash((\lambda a)b);
\eta(\lambda)(\bullet)=e_L
\quad(\lambda\in H, a, b\in L, I=\{\bullet\}).
\end{equation}
\begin{proposition}\label{prop:braidedmonoid}
$(L, m, \eta, \sigma)$ is a braided monoid in $\setH$
$($For the definition of the braided monoid, see $\mathrm{Definition}$
$\ref{def:braided}$$)$.
Moreover, $m\sigma=m$
and the
map $L\ni b\mapsto m(\lambda)(a, b)\in L$ is injective for all
$\lambda\in H$ and $a\in L$.
\end{proposition}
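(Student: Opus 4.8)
The plan is to verify each defining identity of Definition~\ref{def:braided} (together with the monoid axioms of Definition~\ref{def:monoid}) directly, by unwinding it into an equality of families of maps indexed by $\lambda\in H=L$ and reducing it to the left-quasigroup identities $\lambda(\lambda\backslash c)=c$, $\lambda\backslash(\lambda c)=c$, $\lambda e_L=e_L\lambda=\lambda$, the bijectivity of $b\mapsto cb$, and the torsor identities $\mu_1^G(x,x,z)=z$, $\mu_1^G(x,z,z)=x$ for $\mu_1^G(x,y,z)=xy^{-1}z$, plus the two $\mu_1^G$-relations displayed after Proposition~\ref{prop:sigmabraid}. First I would record that $m$, $\eta$, $\sigma$ really are morphisms of $\setH$ (for $m$: $\lambda\bigl(\lambda\backslash((\lambda a)b)\bigr)=(\lambda a)b$; for $\eta$: $\lambda e_L=\lambda$; for $\sigma$ this is the ambient setting of Proposition~\ref{prop:sigmabraid}), and then that $(L,m,\eta)$ is a monoid: the unit axioms \eqref{eq:monoid2} reduce to $\lambda\backslash(\lambda a)=a$ after $\lambda e_L=e_L\lambda=\lambda$, and \eqref{eq:asslaw} holds because both sides, evaluated at $(\lambda;((a,b),c))$, equal $\lambda\backslash(((\lambda a)b)c)$, using $\lambda\,m(\lambda)(a,b)=(\lambda a)b$ on the left and $(\lambda a)\,m(\lambda a)(b,c)=((\lambda a)b)c$ on the right.

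The two auxiliary assertions are quick. For $m\sigma=m$: the definition \eqref{eq:eta} of $\eta_\lambda$ says exactly $(\lambda\xi_\lambda(a,b))\,\eta_\lambda(a,b)=(\lambda a)b$, whence $m(\lambda)(\sigma(\lambda)(a,b))=\lambda\backslash((\lambda a)b)=m(\lambda)(a,b)$; this uses no group structure whatsoever. For the injectivity of $b\mapsto m(\lambda)(a,b)=\lambda\backslash((\lambda a)b)$: by the defining property of a left quasigroup the map $b\mapsto(\lambda a)b$ is a bijection of $L$, and $c\mapsto\lambda\backslash c$ is the inverse bijection of $x\mapsto\lambda x$, so the composite is in fact bijective.

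For the braided-monoid part, the unit compatibilities \eqref{eq:braidedmonoid3} and \eqref{eq:braidedmonoid4} come down to computing $\sigma(\lambda)(e_L,a)$ and $\sigma(\lambda)(a,e_L)$: since $\pi(\lambda e_L)=\pi(\lambda)$, the identities $\mu_1^G(x,x,z)=z$ and $\mu_1^G(x,z,z)=x$ give $\xi_\lambda(e_L,a)=a$ and $\xi_\lambda(a,e_L)=e_L$, and then \eqref{eq:eta} forces $\sigma(\lambda)(e_L,a)=(a,e_L)$ and $\sigma(\lambda)(a,e_L)=(e_L,a)$; matching these against the right-hand sides $(1_L\otimes\eta)r_L^{-1}l_L$ and $(\eta\otimes 1_L)l_L^{-1}r_L$ is routine manipulation of the unit constraints. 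What remains are the mixed hexagon-type relations \eqref{eq:braidedmonoid1} and \eqref{eq:braidedmonoid2} between $\sigma$ and $m$, and this is where the real work lies.

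For those, the strategy is to observe that along an evaluation at $(\lambda;((a,b),c))$ all quantities are governed by the prefix chain $\lambda,\ \lambda a,\ (\lambda a)b,\ ((\lambda a)b)c$ in $L$: each occurrence of a $\xi$ contributes a factor $\lambda\backslash\pi^{-1}\bigl(\pi(p)\,\pi(q)^{-1}\,\pi(r)\bigr)$ with $p,q,r$ consecutive entries of (a shift of) that chain, each occurrence of $m$ turns $\lambda\backslash$ of such an element into the element itself because $\lambda\,m(\lambda)(-,-)$ is the next prefix, and the $\eta$-components are pinned down by \eqref{eq:eta}. After substituting $\pi$ and expanding $\mu_1^G(x,y,z)=xy^{-1}z$, the composites on the two sides of \eqref{eq:braidedmonoid2} (resp.\ \eqref{eq:braidedmonoid1}) telescope via $\pi(q)^{-1}\pi(q)=1$ and collapse to the same value — for \eqref{eq:braidedmonoid2} the first components of both sides become $\lambda\backslash\pi^{-1}\bigl(\pi(\lambda)\pi(\lambda a)^{-1}\pi(((\lambda a)b)c)\bigr)$ — and the remaining components then follow from left-quasigroup cancellation together with \eqref{eq:eta}. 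I expect the genuinely delicate point to be the bookkeeping of the associativity constraints $a_{LLL}^{\pm 1}$ and of which base point each $\xi$ and each $m$ is taken over; this is the same type of computation already carried out for the braid relation \eqref{eq:braidrel} in \cite[Theorem 3.3]{matsumoto2} and \cite[Theorem 3.2]{shibu2}, and since $(L,m,\eta,\sigma)$ is the braided monoid canonically attached to the torsor $(G,\mu_1^G)$ (cf.\ \cite[Section 6]{shibu2} and the braided-semigroup construction of \cite{matsumoto}), an alternative is to invoke those results for \eqref{eq:braidedmonoid1} and \eqref{eq:braidedmonoid2} and supply only the unit, $m\sigma=m$, and injectivity statements by hand.
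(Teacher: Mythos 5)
Your proposal is correct and follows essentially the same route as the paper: direct elementwise verification of each axiom using the left-quasigroup identities and the expansion of $\mu_1^G$, with the key step for \eqref{eq:braidedmonoid2} being exactly the paper's observation that both sides have first component $\lambda\backslash\pi^{-1}\bigl(\pi(\lambda)\pi(\lambda a)^{-1}\pi(((\lambda a)b)c)\bigr)$ and second component determined from it. The unit computations $\sigma(\lambda)(e_L,a)=(a,e_L)$, $\sigma(\lambda)(a,e_L)=(e_L,a)$, the derivation of $m\sigma=m$ from \eqref{eq:eta}, and the injectivity argument all match the paper's proof.
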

\begin{proof}

We only prove \eqref{eq:braidedmonoid2}.
We show
\begin{align}
&(m\otimes 1_L)(\lambda)a_{LLL}^{-1}(\lambda)(1_L\otimes\sigma)(\lambda)
a_{LLL}(\lambda)(\sigma\otimes 1_L)(\lambda)((a, b), c)
\label{eq:prbraidmon2}
\\
=&
\sigma(\lambda)(1_L\otimes m)(\lambda)a_{LLL}(\lambda)((a, b), c)
\notag
\end{align}
for all $\lambda\in H(=L)$ and $a, b, c\in L$.

Because of \eqref{eq:sigma},
the left-hand-side of \eqref{eq:prbraidmon2}
is
\[
(\lambda\backslash((\lambda\xi_\lambda(a, b))\xi_{\lambda\xi_\lambda(a, b)}
(\eta_\lambda(a, b), c)),
\eta_{\lambda\xi_\lambda(a, b)}(\eta_\lambda(a, b), c)),
\]
which is exactly the same as
\begin{align*}
&(\lambda\backslash\pi^{-1}(\pi(\lambda\xi_\lambda(a, b))
\pi((\lambda\xi_\lambda(a, b))\eta_\lambda(a, b))^{-1}
\pi(((\lambda\xi_\lambda(a, b))\eta_\lambda(a, b))c)),
\\
&\quad
((\lambda\xi_\lambda(a, b))\xi_{\lambda\xi_\lambda(a, b)}(\eta_\lambda(a, b),
c))\backslash(((\lambda\xi_\lambda(a, b))\eta_\lambda(a, b))c))
\\
=&
(\lambda\backslash\pi^{-1}(\pi(\lambda)\pi(\lambda a)^{-1}\pi(((\lambda a)b)c)),
\pi^{-1}(\pi(\lambda)\pi(\lambda a)^{-1}\pi(((\lambda a)b)c))\backslash
(((\lambda a)b)c))
\end{align*}
in view of \eqref{eq:xi}
and
\eqref{eq:eta}.

Taking account of \eqref{eq:sigma} again,
we see that
the right-hand-side of \eqref{eq:prbraidmon2}
is
\[
(\xi_\lambda(a, (\lambda a)\backslash(((\lambda a)b)c)),
\eta_\lambda(a, (\lambda a)\backslash(((\lambda a)b)c))),
\]
which coincides with
\[
(\lambda\backslash\pi^{-1}(\pi(\lambda)\pi(\lambda a)^{-1}\pi(((\lambda a)b)c)),
\pi^{-1}(\pi(\lambda)\pi(\lambda a)^{-1}\pi(((\lambda a)b)c))\backslash
(((\lambda a)b)c))
\]
by virtue of \eqref{eq:xi}
and
\eqref{eq:eta}.
Hence \eqref{eq:prbraidmon2} holds.
%
%
\end{proof}

For $\lambda\in H$, the map $\iota^\lambda: L\to L\times L$
is given by
\begin{equation}\label{eq:iota}
\iota^\lambda(a)=(a, (\lambda a)\backslash\lambda)
\quad(a\in L).
\end{equation}
Let $(X, m_X)$ be a left $(L, m, \eta)$-module in $\setH$.
We set $Y=L\otimes X\in\setH$.
Let $\lambda_0(\in L)$ denote the unique element satisfying that
the element $\pi(\lambda_0)$ is the unit of the group $G$.

We can now formulate our main result
whose proof will be given 
in the next section.
\begin{theorem}\label{theorem:grouphomo1}
$(1)$
Let $(Y, m_Y)$ be a left $(L, m, \eta)$-module in $\setH$
satisfying
$\eqref{eq:mXmY}$
and that two pairs
$(m_Y, m_Y^{\mathrm{triv}})$ and
$(m_Y, m_Y^{\sigma})$ braid-commute respectively.
We define the morphism $\theta_Y: (L\otimes L)\otimes Y\to Y(\in\setH)$
by $\eqref{eq:theta}$.
For $\lambda\in H, a, b\in L, x\in X$, let 
$a\shikaku{\lambda}{x}b(\in L)$
denote the first component of $\theta_Y(\lambda)(\iota^\lambda(a),
(b, m_X(\lambda b)((\lambda b)\backslash\lambda, x)))$.
The map $f^{\lambda_0}_x: G\to G$
$(x\in X)$
is given by
\[
f^{\lambda_0}_x(a)=
\pi(\lambda_0)\pi(\lambda_0
((\lambda_0\backslash
\pi^{-1}(a))
\shikaku{\lambda_0}{x}e_L))^{-1}
a
\quad(a\in G).
\]
Then
$\{ f^{\lambda_0}_x: G\to G \mid x\in X\}$
is a family
of group homomorphisms.

\noindent{$(2)$}
Let 
$\{ f^{\lambda_0}_x: G\to G \mid x\in X\}$
be a family of group homomorphisms.
For $\lambda\in H, a, b\in L, x\in X$,
we define\/$:$
\begin{align}
\notag
&a\shikaku{\lambda}{x}b
=
\lambda\backslash
\pi^{-1}(\pi(\lambda a)\pi(\lambda)^{-1}\pi(\lambda b)
f^{\lambda_0}_{m_X(\lambda_0)(\lambda_0\backslash\lambda, x)}(\pi(\lambda))
\\
\notag
&\qquad\qquad
f^{\lambda_0}_{m_X(\lambda_0)(\lambda_0\backslash\lambda, x)}
(\pi(\lambda a))^{-1});
\\
&m_Y(\lambda)(a, (b, x))
=
(\lambda\backslash c,
m_X(c)
(c\backslash(\lambda a),
m_X(\lambda a)(b, x)).
\label{eq:mY}
\end{align}
Here,
$c=(\lambda a)(((\lambda a)\backslash\lambda)
\shikaku{\lambda a}{m_X(\lambda a)(b, x)}b)\in L$.
Then $m_Y: L\otimes Y\to Y$ 
is a morphism of $\setH$.
In addition,
$(Y, m_Y)$ is a left $(L, m, \eta)$-module in $\setH$
satisfying
$\eqref{eq:mXmY}$
and that two pairs
$(m_Y, m_Y^{\mathrm{triv}})$ and
$(m_Y, m_Y^{\sigma})$ braid-commute respectively.

\noindent{$(3)$}
The correspondence in $(1)$
is the inverse of that in $(2)$
and vice versa.
\end{theorem}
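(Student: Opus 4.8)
The plan is to reduce Theorem~\ref{theorem:grouphomo1} to the explicit description of left $L\underset{\mathrm{tw}}{\otimes}L$-modules in $\setH$ and then to a single functional equation in the group $G$. First I would invoke Corollary~\ref{cor:mYthetaY}: the assignment $m_Y\mapsto\theta_Y$ of \eqref{eq:theta}, with inverse \eqref{eq:mYtensor}, is a bijection between the left $(L,m,\eta)$-modules $(Y,m_Y)$ in $\setH$ that satisfy \eqref{eq:mXmY} and for which $(m_Y,m_Y^{\mathrm{triv}})$ braid-commutes, and the left $L\underset{\mathrm{tw}}{\otimes}L$-modules $(Y,\theta_Y)$ that satisfy \eqref{eq:thetamodule1} and \eqref{eq:thetamodule2}. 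Since $m_Y^{\mathrm{triv}}$, and hence $m_Y^{\sigma}$, is recovered from $\theta_Y$ (cf.\ \eqref{eq:mYtrivexp} and \eqref{eq:mYsigma}), the condition that $(m_Y,m_Y^{\sigma})$ braid-commute is itself a condition on $\theta_Y$ alone. Thus all three parts follow once one (i) parametrises such $\theta_Y$ by the $L$-valued function $(\lambda,a,b,x)\mapsto a\shikaku{\lambda}{x}b$, and (ii) identifies condition \eqref{eq:braidcommmYmYsigma}, in that parametrisation, with the assertion that the maps $f^{\lambda_0}_x$ are group homomorphisms; (i) is the content of the auxiliary equivalence Corollary~\ref{cor:equivmYshikaku}, while (ii) is the explicit computation below.

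For the parametrisation I would unwind \eqref{eq:theta} and the object structure of $Y=L\otimes X$ in $\setH$. The morphism $\theta_Y(\lambda)\colon(L\times L)\times(L\times X)\to L\times X$ has its $X$-component built out of $m_X$ alone, in the shape displayed in \eqref{eq:mY}, so $\theta_Y$ is recorded by a single $L$-valued function; evaluating it on the base points $\iota^{\lambda}(a)$ of \eqref{eq:iota} — which are designed so that $\lambda\cdot_{L\otimes L}\iota^{\lambda}(a)=\lambda$ — and on $(b,m_X(\lambda b)((\lambda b)\backslash\lambda,x))$ is exactly what yields $a\shikaku{\lambda}{x}b$, and the module and normalisation axioms force $e_L\shikaku{\lambda}{x}e_L=e_L$. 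Conversely, any such $L$-valued function for which the $L\underset{\mathrm{tw}}{\otimes}L$-associativity \eqref{eq:thetaYaction1} holds gives back a valid $\theta_Y$; transporting it through \eqref{eq:mYtensor} produces the formula for $m_Y$ in part~(2). Well-definedness of that $m_Y$ in $\setH$ — the identity $\lambda\cdot_Y m_Y(\lambda)(a,(b,x))=\lambda\cdot_{L\otimes Y}(a,(b,x))$ — is then a direct check using only that $L$ is a left quasigroup with a unit and that $m_X$ is a morphism, and by Theorem~\ref{thm:thetaYmY} this $m_Y$ automatically satisfies \eqref{eq:mXmY} and has $(m_Y,m_Y^{\mathrm{triv}})$ braid-commuting, so the only remaining point is \eqref{eq:braidcommmYmYsigma}.

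The heart is therefore translating the $L\underset{\mathrm{tw}}{\otimes}L$-associativity together with \eqref{eq:braidcommmYmYsigma} into a condition on $a\shikaku{\lambda}{x}b$. Here I would expand $m_{L\otimes L}$ via \eqref{eq:mAotimesA1}, so that the $\sigma$ occurring in the twisted multiplication is the explicit dynamical Yang--Baxter map \eqref{eq:sigma} built from $\mu_1^G(a,b,c)=ab^{-1}c$ of \eqref{eq:mu1G}, and then apply $\pi$ everywhere to replace the left quasigroup arithmetic by group arithmetic in $G$. The two torsor identities for $\mu_1^G$ quoted after Proposition~\ref{prop:sigmabraid} collapse the resulting equations to a single relation; specialising at $\lambda=\lambda_0$, where $\pi(\lambda_0)$ is the unit of $G$ and the formulas simplify drastically, this relation becomes precisely $f^{\lambda_0}_x(gh)=f^{\lambda_0}_x(g)f^{\lambda_0}_x(h)$ for all $g,h\in G$, with $f^{\lambda_0}_x(\pi(\lambda_0))=\pi(\lambda_0)$ coming from $e_L\shikaku{\lambda_0}{x}e_L=e_L$. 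Reading this chain forwards from a given $(Y,m_Y)$ proves part~(1); reading it backwards from a given family $\{f^{\lambda_0}_x\}$, together with the second half of Theorem~\ref{thm:thetaYmY}, proves part~(2).

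Part~(3) is then bookkeeping on top of Corollary~\ref{cor:mYthetaY}: the passages $m_Y\mapsto\theta_Y\mapsto(a\shikaku{\lambda}{x}b)$ are bijections onto their images, so it suffices to check that the general-$\lambda$ formula for $a\shikaku{\lambda}{x}b$ in part~(2) reproduces the value read off from $\theta_Y$ in part~(1) — equivalently, that the whole family $\{a\shikaku{\lambda}{x}b\}_{\lambda\in H}$ is reconstructed from its slice at $\lambda_0$. I expect this propagation step to be \emph{the main obstacle}: one must use the $\sigma$-braid-commuting condition to carry the base-point data at $\lambda_0$ consistently to every $\lambda$, and it is precisely this that makes both braid-commuting hypotheses — not only $(m_Y,m_Y^{\mathrm{triv}})$ — necessary. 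Once that consistency is established, the formula for $a\shikaku{\lambda}{x}b$ in part~(2) is the unique such extension, and the two correspondences are mutually inverse.
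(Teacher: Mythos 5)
Your plan follows essentially the same route as the paper: reduce $m_Y$ to $\theta_Y$ and then to the family $a\shikaku{\lambda}{x}b$ via Corollaries \ref{cor:mYthetaY} and \ref{cor:equivmYshikaku}, translate the remaining conditions into group arithmetic in $G$, and recover everything from the slice at $\lambda_0$, where the constraint becomes multiplicativity of $f^{\lambda_0}_x$. The ``explicit computation'' and ``propagation step'' you defer are exactly what the paper carries out through the intermediate reparametrizations $\beta^\lambda_x(a)=a\shikaku{\lambda}{x}e_L$ and $\Pi^\lambda_x(a)$ in Propositions \ref{prop:equivshikakubeta}--\ref{prop:equivPif}, with \eqref{eq:Pi2} encoding the consistency across dynamical parameters that you correctly flag as the main obstacle.
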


It follows from 
the above theorem,
together with Propositions
\ref{prop:k1}, \ref{prop:k2}, \ref{prop:sigmabraid},
\ref{prop:sigmaisom}
and
Remarks \ref{rem:braidcommmYmYtriv},
\ref{rem:braidcommmYmYsigma},
that 
Corollary \ref{thm:reflection} and Proposition \ref{cor:reflection}
can produce dynamical reflection maps 
$k: L\otimes X\to L\otimes X\in\setH$
\eqref{eq:k}.
\section{Proof of Theorem \ref{theorem:grouphomo1}}
\label{section:proof}
This section is devoted to giving a proof of
Theorem \ref{theorem:grouphomo1}
(Cf.\ \cite[Section 8]{decommer}).

Let $L=(L, \cdot, e_L)$ be a left quasigroup with a unit
(Definition \ref{def:leftquasig}).
We set $H=L$.
Let $G$ be a group isomorphic to $L$ as sets.
By Proposition
\ref{prop:braidedmonoid},
$(L, m, \eta, \sigma)$
is a braided monoid in $\setH$
(see Definition \ref{def:braided}, \eqref{eq:sigma}, 
and \eqref{eq:m}).

Let $(X, m_X)$ be a left $(L, m, \eta)$-module in $\setH$.
We write
$Y=L\otimes X\in\setH$.
For $\lambda\in H, a, b\in L$,
elements $a\underset{\lambda}{\cdot}b$ and $\rho^\lambda_b(a)\in L$
are defined by:
\begin{align}
&a\underset{\lambda}{\cdot}b=\lambda\backslash
\pi^{-1}(\mu_1^G(\pi(\lambda a),
\pi(\lambda), \pi(\lambda b)));
\label{eq:dotlambda}
\\
&\rho^\lambda_b(a)=\lambda\backslash
\pi^{-1}(\mu_1^G(\pi((\lambda b)a), \pi(\lambda b), \pi(\lambda))).
\label{eq:rholambda}
\end{align}
Here,
$\mu_1^G: G\times G\times G\ni(a, b, c)\mapsto ab^{-1}c\in G$
is the ternary operation in
\eqref{eq:mu1G}.
\begin{remark}\label{rem:dotlambda}
$L$ with the binary operation \eqref{eq:dotlambda}
is a group
(Cf.\ \cite[Proposition 4.11]{shibu1}).
Its unit element is $e_L$
and
the inverse of $a\in L$ is
$\lambda\backslash\pi^{-1}(\mu_1^G(\pi(\lambda),
\pi(\lambda a),
\pi(\lambda)))$
(See \cite[Section 1]{johnstone}
and
\cite[Remark 5.3]{shibukawa}).
\end{remark}
\begin{proposition}\label{prop:thetaYshikaku1}
Let $(Y, \theta_Y)$ be a left module
of the twisted monoid $L\underset{\mathrm{tw}}{\otimes}L$
satisfying $\eqref{eq:thetamodule1}$
and $\eqref{eq:thetamodule2}$
$($For the twisted monoid, see below $\mathrm{Proposition}$
$\ref{prop:twistedmonnoid}$$)$.
We define
$a\shikaku{\lambda}{x}b(\in L)$
$(\lambda\in H, a, b\in L, x\in X)$
by
the first component of $\theta_Y(\lambda)(\iota^\lambda(a),
(b, m_X(\lambda b)((\lambda b)\backslash\lambda, x)))$
$($For 
$\iota^\lambda(a)$, see
$\eqref{eq:iota}$$)$.
Then they enjoy the following for all $\lambda\in H, a, b, c\in L$, 
and $x\in X$\/$:$
\begin{align}
&(a\underset{\lambda}{\cdot}b)\shikaku{\lambda}{x}c
=a\shikaku{\lambda}{x}(b\shikaku{\lambda}{x}c);
\label{eq:shikaku1}
\\
&e_L\shikaku{\lambda}{x}b=b;
\label{eq:shikaku2}
\\
&\lambda\backslash((\lambda b)(a\shikaku{\lambda b}{x}c))
=\rho^\lambda_b(a)\shikaku{\lambda}{m_X(\lambda)(b, x)}(\lambda\backslash
((\lambda b)c)).
\label{eq:shikaku3}
\end{align}
\end{proposition}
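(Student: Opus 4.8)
The three identities \eqref{eq:shikaku1}--\eqref{eq:shikaku3} should follow by translating the module axioms for $(Y,\theta_Y)$ together with the extra conditions \eqref{eq:thetamodule1} and \eqref{eq:thetamodule2} into the combinatorial language of $\setH$, evaluating everything on explicit elements. The plan is to first record, once and for all, the explicit form of the structure maps in $\setH$: the twisted multiplication $m_{L\underset{\mathrm{tw}}{\otimes}L}$ from \eqref{eq:mAotimesA1} (using that $\sigma$, $m$, and the associativity/unit constraints of $\setH$ are the concrete maps of Section \ref{section:setH}), the map $\iota^\lambda$ from \eqref{eq:iota}, and the fact that the first component of $\theta_Y(\lambda)(\iota^\lambda(a),(b,z))$ is precisely $a\shikaku{\lambda}{z\text{-datum}}b$ by definition. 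Since $\mu_1^G(\alpha,\beta,\gamma)=\alpha\beta^{-1}\gamma$ in $G$, the operations $\underset{\lambda}{\cdot}$ and $\rho^\lambda_b$ of \eqref{eq:dotlambda}--\eqref{eq:rholambda} become, after pushing through $\pi$, ordinary group multiplication in $G$ conjugated/translated by $\pi(\lambda)$; Remark \ref{rem:dotlambda} makes $(L,\underset{\lambda}{\cdot},e_L)$ a group, which is what makes \eqref{eq:shikaku1}--\eqref{eq:shikaku2} even type-check as a ``left action of a group on $L$.''

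For \eqref{eq:shikaku2} I would simply evaluate $\theta_Y((\eta\otimes 1_L)\otimes 1_Y)$, or rather use \eqref{eq:thetamodule2}: plugging $a=e_L$ into the definition of $a\shikaku{\lambda}{x}b$ makes $\iota^\lambda(e_L)=(e_L,\lambda\backslash\lambda)=(e_L,e_L)$, and \eqref{eq:thetamodule2} says $\theta_Y((1_L\otimes\eta)\otimes 1_Y)=(m\otimes 1_X)a_{LLX}^{-1}(r_L\otimes 1_Y)$, from which the first component reads off as $b$. For \eqref{eq:shikaku1} the key input is the left-module axiom $\theta_Y(m_{L\underset{\mathrm{tw}}{\otimes}L}\otimes 1_Y)=\theta_Y(1_{L\otimes L}\otimes\theta_Y)a_{L\otimes L\,L\otimes L\,Y}$: I would feed in the element built from $\iota^\lambda(a)$, $\iota^\lambda(b)$, and $(c,\ldots)$, compute both sides, and check that on the left the twisted product of the two ``$\iota$''-packets collapses (using that the second components of $\iota^\lambda$ are exactly what is needed so that $m_{L\underset{\mathrm{tw}}{\otimes}L}$ produces $\iota^\lambda(a\underset{\lambda}{\cdot}b)$ up to the bookkeeping data), while on the right one gets the nested $\theta_Y$ whose first component is $a\shikaku{\lambda}{x}(b\shikaku{\lambda}{x}c)$. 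Identity \eqref{eq:shikaku3} is the compatibility between $\theta_Y$ and $m_X$; here I would use \eqref{eq:thetamodule1}, $m_X\theta_Y=m_X(1_L\otimes m_X)a_{LLX}(1_{L\otimes L}\otimes m_X)$, together with the left-$(L,m,\eta)$-module axioms for $(X,m_X)$, to relate $a\shikaku{\lambda b}{x}c$ (which lives ``over $\lambda b$'') to an operation over $\lambda$ by the change-of-base $\rho^\lambda_b$; the appearance of $m_X(\lambda)(b,x)$ in the subscript on the right of \eqref{eq:shikaku3} is dictated by how $m_X$ propagates the $H$-decoration when one shifts the basepoint from $\lambda b$ to $\lambda$.

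The main obstacle will be \eqref{eq:shikaku3}: unlike the first two, it genuinely mixes $\theta_Y$ with the module map $m_X$ and with a basepoint shift, so the bookkeeping of which element of $H$ indexes each map has to be done very carefully — in particular one must verify that the second components produced by $\iota^{\lambda b}$ versus $\iota^\lambda$ are consistent under the group operation $\underset{\lambda}{\cdot}$, and that \eqref{eq:thetamodule1} really forces the $m_X$-decorations to match. Everything else is a finite, if tedious, unwinding of definitions in $\setH$ using the pentagon/triangle axioms (which in the concrete $\setH$ reduce to trivial rearrangements of tuples) and the group identity $\mu_1^G(\alpha,\beta,\gamma)=\alpha\beta^{-1}\gamma$.
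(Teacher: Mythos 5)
Your overall strategy --- evaluating the twisted-module axioms of $(Y,\theta_Y)$ on explicit elements built from $\iota^\lambda$ --- is the paper's strategy, and your treatment of \eqref{eq:shikaku2} is fine. But the plan has a genuine gap at the step you treat as routine in \eqref{eq:shikaku1}. By definition, $a\shikaku{\lambda}{x}d$ is the first component of $\theta_Y(\lambda)$ applied to $(\iota^\lambda(a),(d,m_X(\lambda d)((\lambda d)\backslash\lambda,x)))$; the second argument must carry the \emph{canonical} $X$-component attached to $d$ and $x$. When you rewrite $\theta_Y(m_{L\otimes L}\otimes 1_Y)$ as the nested $\theta_Y(1_{L\otimes L}\otimes\theta_Y)$, the inner factor outputs some pair $(b\shikaku{\lambda}{x}c,\,y)$ with an a priori unknown $y\in X$, and you cannot read off the outer first component as $a\shikaku{\lambda}{x}(b\shikaku{\lambda}{x}c)$ unless you first prove $y=m_X(\lambda(b\shikaku{\lambda}{x}c))((\lambda(b\shikaku{\lambda}{x}c))\backslash\lambda,x)$. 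This is exactly the paper's Claim \ref{clm:theta}, and it is where \eqref{eq:thetamodule1} actually enters: applying $m_X$ to the defining equation and using the left-$(L,m,\eta)$-module axioms for $(X,m_X)$ forces $m_X(\lambda)(a\shikaku{\lambda}{x}b,y)=x$, which pins down $y$. Without this lemma the iteration does not close, and the same control of second components is needed again for \eqref{eq:shikaku3}.

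For \eqref{eq:shikaku3} your plan is too vague to be carried out: ``relate $a\shikaku{\lambda b}{x}c$ to an operation over $\lambda$ by the change-of-base $\rho^\lambda_b$'' does not identify the identity that produces $\rho^\lambda_b$. The paper's mechanism is concrete: one proves the commutation relation
\[
m_{L\otimes L}(\lambda)((b,e_L),\iota^{\lambda b}(a))
=m_{L\otimes L}(\lambda)(\iota^\lambda(\rho^\lambda_b(a)),(b,e_L))
\]
in the twisted monoid (a direct computation with \eqref{eq:mAotimesA1}, \eqref{eq:xi}, \eqref{eq:eta}, \eqref{eq:iota}, and \eqref{eq:rholambda}), together with an explicit formula for $\theta_Y(\lambda)((b,e_L),-)$ obtained from \eqref{eq:thetamodule2}. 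Applying $\theta_Y$ to both sides of the displayed identity and using associativity of the twisted action then yields \eqref{eq:shikaku3}; this, rather than \eqref{eq:thetamodule1} alone, is the source of the basepoint shift and of the subscript $m_X(\lambda)(b,x)$. If you add these two lemmas --- the canonical-second-component claim and the commutation relation above --- your plan becomes essentially the paper's proof.
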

\begin{proof}
We first prove \eqref{eq:shikaku1}
and \eqref{eq:shikaku2}
by means of the following claim.
\begin{claim}\label{clm:theta}
For $\lambda\in H, a, b\in L$,
and $x\in X$,
\[
\theta_Y(\lambda)(\iota^\lambda(a), (b, m_X(\lambda b)((\lambda b)\backslash
\lambda, x)))
=
(a\shikaku{\lambda}{x}b,
m_X(\lambda(a\shikaku{\lambda}{x}b))
((\lambda(a\shikaku{\lambda}{x}b))\backslash\lambda, x)).
\]
\end{claim}
Assuming this claim for the moment, we complete the proof
of \eqref{eq:shikaku1}
and \eqref{eq:shikaku2}.

It follows from the above claim 
and the definition of 
$(a\underset{\lambda}{\cdot}b)\shikaku{\lambda}{x}c$
that,
for $\lambda\in H, a, b, c\in L$,
and $x\in X$,
\begin{align*}
&((a\underset{\lambda}{\cdot}b)\shikaku{\lambda}{x}c,
m_X(\lambda((a\underset{\lambda}{\cdot}b)\shikaku{\lambda}{x}c))
((\lambda((a\underset{\lambda}{\cdot}b)\shikaku{\lambda}{x}c))\backslash
\lambda, x))
\\
=&
\theta_Y(\lambda)(\iota^\lambda(a\underset{\lambda}{\cdot}b),
(c, m_X(\lambda c)((\lambda c)\backslash\lambda, x))),
\end{align*}
which is
\begin{equation}
\label{eq:prop711}
\theta_Y(\lambda)(m_{L\otimes L}(\lambda)
(\iota^\lambda(a), \iota^\lambda(b)),
(c, m_X(\lambda c)((\lambda c)\backslash\lambda, x))),
\end{equation}
since
\eqref{eq:mAotimesA1}, \eqref{eq:xi},
\eqref{eq:eta},
\eqref{eq:m},
\eqref{eq:iota},
and
\eqref{eq:dotlambda}
induce
\begin{equation}
\label{eq:prop718}
\iota^\lambda(a\underset{\lambda}{\cdot}b)=
m_{L\otimes L}(\lambda)
(\iota^\lambda(a), \iota^\lambda(b)).
\end{equation}
Because 
$(Y, \theta_Y)$ is a left module of the twisted monoid
$L\underset{\mathrm{tw}}{\otimes}L$
and
$\lambda\iota^\lambda(a)=\lambda$,
\eqref{eq:prop711}
is
\begin{equation}
\label{eq:prop712}
\theta_Y(\lambda)(\iota^\lambda(a),
\theta_Y(\lambda)(\iota^\lambda(b),
(c, m_X(\lambda c)((\lambda c)\backslash\lambda, x)))).
\end{equation}
By using Claim \ref{clm:theta} again,
we can show that \eqref{eq:prop712} coincides with
$(a\shikaku{\lambda}{x}(b\shikaku{\lambda}{x}c),$
$m_X(\lambda(a\shikaku{\lambda}{x}(b\shikaku{\lambda}{x}c)))
((\lambda(a\shikaku{\lambda}{x}(b\shikaku{\lambda}{x}c)))\backslash
\lambda, x))$,
and we have thus proved \eqref{eq:shikaku1},
comparing the first components.

The next task is to show \eqref{eq:shikaku2} with the aid of Claim
\ref{clm:theta}.
From this claim
and \eqref{eq:etaAotimesA1},
\begin{align}
&(e_L\shikaku{\lambda}{x}b,
m_X(\lambda(e_L\shikaku{\lambda}{x}b))
((\lambda(e_L\shikaku{\lambda}{x}b))\backslash\lambda, x))
\label{eq:prop715}
\\
=&
\theta_Y(\lambda)((e_L, e_L), (b, 
m_X(\lambda b)((\lambda b)\backslash\lambda, x))
\notag
\\
=&
(\theta_Y(\eta_{L\otimes L}\otimes 1_Y))(\lambda)
(\bullet, (b, 
m_X(\lambda b)((\lambda b)\backslash\lambda, x))).
\notag
\end{align}
Since
$(Y, \theta_Y)$ is a left module of the twisted monoid
$L\underset{\mathrm{tw}}{\otimes}L$,
the right-hand-side of
\eqref{eq:prop715} coincides with
$l_{L\otimes L}(\lambda)(\bullet, (b, 
m_X(\lambda b)((\lambda b)\backslash\lambda, x)))
=
(b, 
m_X(\lambda b)((\lambda b)\backslash\lambda,
x))$,
which establishes \eqref{eq:shikaku2}.

The task is now to prove \eqref{eq:shikaku3}.
For the proof, we need the following.
\begin{claim}\label{clm:mLotimesL}
For $\lambda\in H, a, b, c\in L$,
and $x\in X$,
\begin{align}
m_{L \otimes L}(\lambda)((b, e_{L}),  \iota^{\lambda b} (a))
&=m_{L \otimes L}(\lambda)
 (\iota^{\lambda} (\rho^\lambda_b(a)), (b, e_{L}))
 \label{eq:pfiotaandrho}
 \\
&=
(\lambda\backslash((\lambda b)a), ((\lambda b)a)\backslash(\lambda b)), 
 \notag
 \end{align}
 \begin{align}
 &\theta_{Y}(\lambda)((b, e_{L}), (c, m_{X}((\lambda b) c)(((\lambda b) c) \backslash (\lambda b), x))) 
\label{eq:pfthetaYandshikaku1}  \\
= &(\lambda \backslash ((\lambda b) c), 
m_{X}((\lambda b) c)(((\lambda b) c) \backslash \lambda, 
m_{X}(\lambda)(b, x))).
\notag
\end{align}
\end{claim}
Assuming this claim for the moment, we complete the proof
of \eqref{eq:shikaku3}.
From \eqref{eq:pfiotaandrho},
\begin{align}
&\theta_Y(\lambda)
(m_{L \otimes L}(\lambda)(\iota^\lambda(\rho^\lambda_b(a)), (b, e_{L})), 
(c, m_{X}((\lambda b) c)(((\lambda b) c) \backslash (\lambda b), x)))
\label{eq:prop714}
\\
=&\theta_Y(\lambda)
(m_{L \otimes L}(\lambda)((b, e_{L}), \iota^{\lambda b}(a)), 
(c, m_{X}((\lambda b) c)(((\lambda b) c) \backslash (\lambda b), x))),\notag
\end{align}
and we compute the both sides of \eqref{eq:prop714}.

Because of \eqref{eq:pfthetaYandshikaku1}, 
$\lambda(\iota^\lambda(\rho^\lambda_b(a)))=\lambda$, and 
the fact that $(Y, \theta_Y)$ is a left module of the twisted monoid
$L\underset{\mathrm{tw}}{\otimes}L$,
the left-hand-side of \eqref{eq:prop714}
is
\begin{align*}
&\theta_Y(\lambda)(\iota^\lambda(\rho^\lambda_b(a)), 
\theta_Y(\lambda\iota^\lambda(\rho^\lambda_b(a)))
((b, e_{L}), (c, m_{X}((\lambda b) c)(((\lambda b) c) \backslash (\lambda b), x))))
\\
=&
\theta_Y(\lambda)(\iota^\lambda(\rho^\lambda_b(a)), 
(\lambda\backslash((\lambda b)c), 
m_{X}((\lambda b) c)(((\lambda b) c) \backslash \lambda, m_X(\lambda)
(b, x)))).
\end{align*}
By the definition,
the first component of the right-hand-side of the above equation is exactly
$\rho^\lambda_b(a)\shikaku{\lambda}{m_X(\lambda)(b, x)}(\lambda\backslash
((\lambda b)c))$.

A slightly change in the proof actually shows that
the first component of the right-hand-side of \eqref{eq:prop714}
is 
$\lambda\backslash((\lambda b)(a\shikaku{\lambda b}{x}c))$,
and the proof of \eqref{eq:shikaku3} is therefore complete.
\end{proof}
\begin{proof}[Proof of Claim $\ref{clm:theta}$]
By the definition, 
$a\shikaku{\lambda}{x}b(\in L)$
$(\lambda\in H, a, b\in L, x\in X)$
is
the first component of $\theta_Y(\lambda)(\iota^\lambda(a),
(b, m_X(\lambda b)((\lambda b)\backslash\lambda, x)))$,
and we define $y\in X$ by
\begin{equation}
\label{eq:claim2}
\theta_Y(\lambda)(\iota^\lambda(a), (b, m_X(\lambda b)((\lambda b)\backslash
\lambda, x)))
=
(a\shikaku{\lambda}{x}b, y).
\end{equation}
Our goal is to show 
\begin{equation}
\label{eq:claim3}
y=
m_X(\lambda(a\shikaku{\lambda}{x}b))
((\lambda(a\shikaku{\lambda}{x}b))\backslash\lambda, x).
\end{equation}

From \eqref{eq:thetamodule1},
\begin{align}
&(m_X\theta_Y)(\lambda)(\iota^\lambda(a), (b, m_X(\lambda b)((\lambda b)\backslash
\lambda, x)))
\label{eq:claim1}
\\
=&
m_X(\lambda)(1_L\otimes m_X)(\lambda)a_{LLX}(\lambda)
(\iota^\lambda(a), m_X(\lambda)(b, m_X(\lambda b)((\lambda b)\backslash
\lambda, x))).
\notag
\end{align}
On account of \eqref{eq:m}
and the fact that
$(X, m_X)$ is a left $(L, m, \eta)$-module,
\begin{align}
&m_X(\lambda)(b, m_X(\lambda b)((\lambda b)\backslash
\lambda, x))
\label{eq:prop716}
\\
=&
(m_X(m\otimes 1_X)a_{LLX}^{-1})(\lambda)(b, ((\lambda b)\backslash
\lambda, x))
\notag
\\
=&
m_X(\lambda)(e_L, x)
\notag
\\
=&
(m_X(\eta\otimes 1_X))(\lambda)(\bullet, x)
\notag
\\
=&
l_X(\lambda)(\bullet, x)
\notag
\\
=&x,
\notag
\end{align}
and
the right-hand-side of \eqref{eq:claim1} is consequently
$m_X(\lambda)(a, m_X(\lambda a)((\lambda a)\backslash\lambda, x))$,
which coincides with
$x$
due to \eqref{eq:prop716}.
In view of \eqref{eq:claim2},
the left-hand-side of \eqref{eq:claim1} is
$m_X(\lambda)(a\shikaku{\lambda}{x}b, y)$,
and 
$x=m_X(\lambda)(a\shikaku{\lambda}{x}b, y)$
as a result.

The right-hand-side of \eqref{eq:claim3} is hence
\begin{align*}
&m_X(\lambda(a\shikaku{\lambda}{x}b))
((\lambda(a\shikaku{\lambda}{x}b))\backslash\lambda,
m_X(\lambda)(a\shikaku{\lambda}{x}b, y))
\\
=&
(m_X(1_L\otimes m_X))(\lambda(a\shikaku{\lambda}{x}b))
((\lambda(a\shikaku{\lambda}{x}b))\backslash\lambda,
(a\shikaku{\lambda}{x}b, y)),
\end{align*}
which coincides with
$y$,
because of 
\eqref{eq:m}
and the fact that $(X, m_X)$ is a left $(L, m, \eta)$-module.
Therefore, the claim follows. 
\end{proof}
\begin{proof}[Proof of Claim $\ref{clm:mLotimesL}$]
On account of \eqref{eq:thetamodule2}, we can  prove \eqref{eq:pfthetaYandshikaku1} immediately.
Let us show \eqref{eq:pfiotaandrho}.
Because of \eqref{eq:mAotimesA1},
\eqref{eq:xi},
\eqref{eq:eta},
\eqref{eq:m},
and \eqref{eq:iota},
the right-hand-side of \eqref{eq:pfiotaandrho} is
\begin{align*}
&m_{L \otimes L}(\lambda)((\rho^\lambda_b(a), 
(\lambda \rho^\lambda_b(a)) \backslash \lambda), (b, e_{L})) 
\\
=& (\lambda \backslash \pi^{-1}(\pi(\lambda \rho^\lambda_b(a)) 
\pi(\lambda)^{-1}\pi(\lambda b)), (\pi^{-1}(\pi(\lambda \rho^\lambda_b(a)) 
\pi(\lambda)^{-1}\pi(\lambda b))) \backslash (\lambda b)).
\end{align*}
By \eqref{eq:rholambda},
$\pi^{-1}(\pi(\lambda \rho^\lambda_b(a)) 
\pi(\lambda)^{-1}\pi(\lambda b))=(\lambda b)a$,
and
the right-hand-side of \eqref{eq:pfiotaandrho}
is $(\lambda\backslash((\lambda b) a), ((\lambda b)a)\backslash(\lambda b))$
as a result.

In the same manner, we can see that
the left-hand-side of \eqref{eq:pfiotaandrho}
is
\begin{align*}
&(m(\lambda)(b, \xi_{\lambda b}(e_L, a)),
\\
&\quad
m((\lambda b)\xi_{\lambda b}(e_L, a))(((\lambda b)\xi_{\lambda b}(e_L, a))
\backslash(((\lambda b)e_L)a), ((\lambda b)a)\backslash(\lambda b)))
\\
=& (\lambda \backslash ((\lambda b) a), ((\lambda b) a) \backslash 
(\lambda b)),
\end{align*}
and
\eqref{eq:pfiotaandrho} is thus proved.
%
\end{proof}
\begin{proposition}\label{prop:thetaYshikaku2}
Let
$a\shikaku{\lambda}{x}b$
$(\lambda\in H, a, b\in L, x\in X)$
be elements of $L$ satisfying
$\eqref{eq:shikaku1}$,
$\eqref{eq:shikaku2}$,
and
$\eqref{eq:shikaku3}$
for all $\lambda\in H, a, b, c\in L$,
and $x\in X$.
We define
$\theta_Y(\lambda)((a, b), (c, x))\in Y$
$(\lambda\in H, a, b, c\in L, x\in X)$
by\/$:$
\begin{align}
\theta_Y(\lambda)((a, b), (c, x))
&=
(\lambda\backslash d, 
m_X(d)(d\backslash((\lambda a)b), m_X((\lambda a)b)(c, x)))
\label{eq:deftheta}
\\
&\!
(=(\lambda\backslash d, m_X(d)(d\backslash(((\lambda a)b)c), x))).
\label{eq:deftheta2}
\end{align}
Here, $d=((\lambda a)b)((((\lambda a)b)\backslash(\lambda a))
\shikaku{(\lambda a)b}{m_X((\lambda a)b)(c, x)}c)$.
Then
$(Y, \theta_Y)$ is a left module
of the twisted monoid
$L\underset{\mathrm{tw}}{\otimes}L$
satisfying $\eqref{eq:thetamodule1}$
and $\eqref{eq:thetamodule2}$.
\end{proposition}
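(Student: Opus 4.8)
The plan is to verify the required properties of $\theta_Y$ in the order: well‑definedness, then the two left‑$L\underset{\mathrm{tw}}{\otimes}L$‑module axioms \eqref{eq:thetaYaction1} and \eqref{eq:thetaYaction2}, and finally the auxiliary identities \eqref{eq:thetamodule1} and \eqref{eq:thetamodule2}. First I would check that $\theta_Y$ is a well‑defined morphism of $\setH$. The elementary identity $m(d)(d\backslash((\lambda a)b),c)=d\backslash(((\lambda a)b)c)$ is immediate from the left‑quasigroup laws and \eqref{eq:m}, so the left‑$(L,m,\eta)$‑module property \eqref{eq:leftmod1} of $(X,m_X)$ forces the two expressions \eqref{eq:deftheta} and \eqref{eq:deftheta2} to coincide; and since $m_X$ is a morphism of $\setH$ one gets $\lambda\cdot_Y\theta_Y(\lambda)((a,b),(c,x))=d\cdot_X m_X(d)(d\backslash(((\lambda a)b)c),x)=(((\lambda a)b)c)\cdot_X x=\lambda\cdot_{(L\otimes L)\otimes Y}((a,b),(c,x))$, so $\theta_Y\in\setH$.

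The axioms \eqref{eq:thetaYaction2}, \eqref{eq:thetamodule1}, \eqref{eq:thetamodule2} are then routine. For \eqref{eq:thetaYaction2} one has $\eta_{L\otimes L}(\lambda)(\bullet)=(e_L,e_L)$, and feeding this into \eqref{eq:deftheta2} the element $d$ collapses to $d=\lambda c$ by \eqref{eq:shikaku2}, whence $\theta_Y(\lambda)((e_L,e_L),(c,x))=(c,m_X(\lambda c)(e_L,x))=(c,x)=l_Y(\lambda)(\bullet,(c,x))$ using \eqref{eq:leftmod2}. For \eqref{eq:thetamodule1}, expanding both sides and repeatedly invoking the module law for $(X,m_X)$ in the form $m_X(\mu)(m(\mu)(p,q),x)=m_X(\mu)(p,m_X(\mu p)(q,x))$, each side reduces to $m_X(\lambda)(\lambda\backslash(((\lambda a)b)c),x)$; no property of $\shikaku{}{}$ intervenes. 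For \eqref{eq:thetamodule2}, the operation $\shikaku{}{}$ occurs only through $e_L\shikaku{}{}c$, so \eqref{eq:shikaku2} gives $\theta_Y(\lambda)((a,e_L),(c,x))=(\lambda\backslash((\lambda a)c),x)$, which is precisely $((m\otimes 1_X)a_{LLX}^{-1}(r_L\otimes 1_Y))(\lambda)((a,\bullet),(c,x))$.

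The bulk of the work, and the expected main obstacle, is the associativity axiom \eqref{eq:thetaYaction1}. I would first record, exactly as in the proof of Proposition \ref{prop:thetaYshikaku1}, the explicit $\setH$‑formula for the twisted product obtained from \eqref{eq:mAotimesA1}, \eqref{eq:xi}, \eqref{eq:eta}, \eqref{eq:m}: $m_{L\otimes L}(\lambda)((p_1,q_1),(p_2,q_2))=(\lambda\backslash w,\,w\backslash((((\lambda p_1)q_1)p_2)q_2))$, where $w=\pi^{-1}(\pi(\lambda p_1)\pi((\lambda p_1)q_1)^{-1}\pi(((\lambda p_1)q_1)p_2))$. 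Substituting this together with \eqref{eq:deftheta}/\eqref{eq:deftheta2} into both sides of \eqref{eq:thetaYaction1} and simplifying with the left‑quasigroup laws and the module law for $(X,m_X)$, both sides acquire the shape $(\lambda\backslash d,\,m_X(d)(d\backslash(zc),x))$ with a common $z=(((\lambda p_1)q_1)p_2)q_2$, so the whole identity is equivalent to the equality of the two $L$‑elements $d$ (the $X$‑components then matching automatically, since $\lambda\backslash(\cdot)$ is injective). Unwinding this equality yields an identity among $\shikaku{}{}$, the group operation $\underset{\lambda}{\cdot}$ of \eqref{eq:dotlambda}, and $\rho^\lambda_{\cdot}(\cdot)$ of \eqref{eq:rholambda}, and it is supplied by \eqref{eq:shikaku1} (governing composition of $\shikaku{}{}$ in its left slot, which matches the $\underset{\lambda}{\cdot}$‑multiplication coming from the first components of the two $L$‑factors of $L\otimes L$) together with \eqref{eq:shikaku3} (governing the change of base point and the insertion of the $m_X$‑action coming from the second components), with \eqref{eq:shikaku2} disposing of the degenerate cases. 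Equivalently, one may shorten this step by noting that \eqref{eq:pfiotaandrho} presents every element of $L\otimes L$ over a base point $\lambda$ as $m_{L\otimes L}(\lambda)((b,e_L),\iota^{\lambda b}(a))$, so that — $m_{L\otimes L}$ being associative by Proposition \ref{prop:twistedmonnoid} and \eqref{eq:thetaYaction2} already in hand — it suffices to check \eqref{eq:thetaYaction1} when the first $L\otimes L$‑factor is $(b,e_L)$ or $\iota^\lambda(a)$; the case $\iota^\lambda(a)$ reduces to \eqref{eq:shikaku1} via the analogue of Claim \ref{clm:theta} (which now holds by direct substitution into \eqref{eq:deftheta}, using \eqref{eq:prop716}), and the case $(b,e_L)$ reduces to \eqref{eq:shikaku3} via the analogue of \eqref{eq:pfthetaYandshikaku1}. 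In both routes the only genuine difficulty is the bookkeeping — tracking the base point of each occurrence of $m_X$ and the subscript $x$ of each $\shikaku{}{}$ so that they align after the reductions — and this is exactly the reverse of the computation already carried out in Proposition \ref{prop:thetaYshikaku1}, so no new idea is needed.
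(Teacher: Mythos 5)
Your proposal is correct and follows essentially the same route as the paper: the easy checks (well-definedness, \eqref{eq:thetaYaction2}, \eqref{eq:thetamodule1}, \eqref{eq:thetamodule2}) are handled identically, and for the associativity axiom the paper implements precisely your second route, reparametrizing the inputs via the bijection $F^\lambda$ built from $m_{L\otimes L}(\lambda)((a,e_L),\iota^{\lambda a}(b))$ (Claim \ref{clm:claim6}) and reducing both sides to a common expression by means of \eqref{eq:shikaku1}, \eqref{eq:shikaku3}, and the cocycle identity $\rho^\lambda_a(\rho^{\lambda a}_c(d))=\rho^\lambda_{\lambda\backslash((\lambda a)c)}(d)$ (Claim \ref{clm:claim5}). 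The bookkeeping you defer is exactly the content of those two claims, and it goes through as you predict.
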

\begin{proof}
An easy computation shows that
$\lambda\theta_Y(\lambda)((a, b), (c, x))
=\lambda((a, b), (c, x))$
for
$\lambda\in H, (a, b)\in L\times L$,
and
$(c, x)\in L\times X$,
and
$\theta_{Y}: (L\otimes L)\otimes Y\to Y$ is consequently
a morphism in $\setH$. 

Because of \eqref{eq:leftmod2},
\eqref{eq:m},
\eqref{eq:shikaku2},
and the fact that
$a\backslash a
=e_L$
for every $a\in L$,
we check at once that $\theta_{Y}: (L\otimes L)\otimes Y\to Y$ 
satisfies \eqref{eq:thetamodule2}. 
From the definition \eqref{eq:m}, \eqref{eq:deftheta}, 
and
the fact that $(X, m_X)$ is a left $(L, m, \eta)$-module, we
can  show \eqref{eq:thetamodule1}.
%

On account of \eqref{eq:leftmod2},
\eqref{eq:etaAotimesA1},
and $\eqref{eq:shikaku2}$, it is immediate that 
$\theta_{Y}(\eta_{L \otimes L} \otimes 1_{Y}) = l_{Y}$. 

Now we show 
$\theta_{Y}(1_{L \otimes L} \otimes \theta_{Y}) 
= \theta_{Y}(m_{L \otimes L} \otimes 1_{Y})a_{L \otimes L L \otimes L Y}^{-1}$. 
For the proof,
we need:
\begin{claim}
\label{clm:claim5}
For $\lambda\in H(=L), a, b, c, d, f\in L$, and $x\in X$,
\begin{align}
&
\lambda\backslash
((\lambda a)(b\shikaku{\lambda a}{m_X(\lambda a)(c, x)}((\lambda a)\backslash
(((\lambda a)c)(d\shikaku{(\lambda a)c}{x}f)))))
\label{eq:prop7111}
\\
=&
(\rho^\lambda_a(b)
\underset{\lambda}{\cdot}
\rho^\lambda_a(\rho^{\lambda a}_c(d)))
\shikaku{\lambda}{m_X(\lambda)(\lambda\backslash((\lambda a)c), x)}
(\lambda\backslash(((\lambda a)c)f)),
\notag
\\
&\theta_Y(\lambda)
(m_{L\otimes L}(\lambda)(\iota^\lambda(a), (b, e_L)),
(f, m_X((\lambda b)f)(((\lambda b)f)\backslash(\lambda b), x)))
\label{eq:prop7113}
\\
=&
(a\shikaku{\lambda}{m_X(\lambda)(b, x)}(\lambda\backslash((\lambda b)f)), m_X(\lambda(a\shikaku{\lambda}{m_X(\lambda)(b, x)}
(\lambda\backslash((\lambda b)f)))) 
\notag
\\
&\quad(\lambda(a\shikaku{\lambda}{m_X(\lambda)(b, x)}
(\lambda\backslash((\lambda b)f)))\backslash(\lambda b), x)).
\notag
\end{align}
\end{claim}
\begin{claim}\label{clm:claim6}
For every $\lambda\in H(=L)$,
the following map
$F^\lambda: (L\times L)\times((L\times L)\times Y)\to
(L\times L)\times((L\times L)\times Y)$
is bijective\/$:$
\begin{align*}
&F^\lambda((a, b), ((c, d), (f, x)))
\\
=&
(m_{L\otimes L}(\lambda)((a, e_L), \iota^{\lambda a}(b)),
(m_{L\otimes L}(\lambda a)((c, e_L), \iota^{(\lambda a)c}(d)),
\\
&\quad(f, m_X(((\lambda a)c)f)((((\lambda a)c)f)\backslash((\lambda a)c), x)))).
\end{align*}
Here,
$((a, b), ((c, d), (f, x)))\in(L\times L)\times((L\times L)\times Y)$.
\end{claim}
Assuming these claims for the moment, we complete the proof.

On account of Claim \ref{clm:claim6},
it suffices to show
\begin{align}
&(\theta_{Y}(1_{L \otimes L} \otimes \theta_{Y}))(\lambda)
F^\lambda((a, b), ((c, d), (f, x)))
\label{eq:prop7112}
\\
= &(\theta_{Y}(m_{L \otimes L} \otimes 1_{Y})a_{L \otimes L L \otimes L Y}^{-1})
(\lambda)F^\lambda((a, b), ((c, d), (f, x)))
\notag
\end{align}
for $\lambda\in H, a, b, c, d, f\in L$, and $x\in X$.

Because of \eqref{eq:m}
and the fact that $(X, m_X)$ is a left $(L, m, \eta)$-module,
\[
m_X((\lambda a)c)(f, m_X(((\lambda a)c)f)((((\lambda a)c)f)\backslash((\lambda a)c),
x))=x,
\]
and consequently
the left-hand-side of \eqref{eq:prop7112} is
\begin{align*}
&\theta_Y(\lambda)((\lambda\backslash((\lambda a)b), ((\lambda a)b)\backslash
(\lambda a)),
\theta_Y(\lambda a)
(((\lambda a)
\backslash(((\lambda a)c)d),
\\
&\quad(((\lambda a)c)d)\backslash((\lambda a)c)),(f, m_X(((\lambda a)c)f)((((\lambda a)c)f)\backslash((\lambda a)c), x))))
\\
=&
\theta_Y(\lambda)((\lambda\backslash((\lambda a)b), ((\lambda a)b)\backslash
(\lambda a)),
((\lambda a)\backslash(((\lambda a)c)
(d\shikaku{(\lambda a)c}{x}f)),
\\
&\quad
m_X(((\lambda a)c)
(d\shikaku{(\lambda a)c}{x}f))
((((\lambda a)c)(d\shikaku{(\lambda a)c}{x}f)) \backslash ((\lambda a)c), x)))
\end{align*}
in view of \eqref{eq:pfiotaandrho}
and \eqref{eq:deftheta}.

By using 
\eqref{eq:mAotimesA}
for $A=L$,
\eqref{eq:deftheta},
and the fact that
$(X, m_X)$ is a left $(L, m, \eta)$-module,
we deduce that
the right-hand-side of the above equation is
\begin{equation}
\label{eq:prop7115}
(\lambda\backslash p, m_X(p)(p\backslash((\lambda a)c), x)).
\end{equation}
Here,
$p = (\lambda a)(b\shikaku{\lambda a}
{m_{X}(\lambda a)(c, x)}((\lambda a)
\backslash(((\lambda a)c)(d\shikaku{(\lambda a)c}{x}f))))$.

Because of 
\eqref{eq:pfiotaandrho}
and
the fact that
$\lambda\iota^\lambda(\rho^\lambda_a(b))=
(\lambda\rho^\lambda_a(b))((\lambda\rho^\lambda_a(b))\backslash\lambda)
=\lambda$
and
$\lambda(a, e_L)=\lambda a$,
the
right-hand-side of \eqref{eq:prop7112} is
\begin{align}
&\theta_Y(\lambda)(m_{L\otimes L}(\lambda)(\iota^\lambda(\rho^\lambda_a(b)),
m_{L\otimes L}(\lambda)(
m_{L\otimes L}(\lambda)((a, e_L), \iota^{\lambda a}(\rho^{\lambda a}_c(d))),
\label{eq:prop7114}
\\
&\quad 
(c, e_L))),
(f,m_X(((\lambda a)c)f)((((\lambda a)c)f)\backslash((\lambda a)c), x)))
\notag
\\
=&
\theta_Y(\lambda)(m_{L\otimes L}(\lambda)(\iota^\lambda(\rho^\lambda_a(b)),
m_{L\otimes L}(\lambda)(
m_{L\otimes L}(\lambda)(\iota^\lambda(\rho^\lambda_a(\rho^{\lambda a}_c(d))), (a, e_L)),
\notag
\\
&\quad
 (c, e_L))),
(f,m_X(((\lambda a)c)f)((((\lambda a)c)f)\backslash((\lambda a)c), x))).
\notag
\end{align}

Owing to \eqref{eq:mAotimesA1}
and
\eqref{eq:xi}--\eqref{eq:sigma},
\begin{equation}
m_{L\otimes L}(\lambda)
((a, e_L), (c, e_L))=(\lambda\backslash((\lambda a)c), e_L).
\label{eq:prop7110}
\end{equation}
By taking account of 
\eqref{eq:mAotimesA}
for $A=L$,
\eqref{eq:prop718},
\eqref{eq:prop7110},
and the fact that
$\lambda(\rho^\lambda_a(\rho^{\lambda a}_c(d)))
=\lambda\iota^\lambda(\rho^\lambda_a(b))=\lambda$,
we obtain
\begin{align*}
&m_{L\otimes L}(\lambda)(\iota^\lambda(\rho^\lambda_a(b)),
m_{L\otimes L}(\lambda)(
m_{L\otimes L}(\lambda)(\iota^\lambda(\rho^\lambda_a(\rho^{\lambda a}_c(d))),
(a, e_L)), (c, e_L)))
\\
=&
m_{L\otimes L}(\lambda)(\iota^\lambda(\rho^\lambda_a(b)),
m_{L\otimes L}(\lambda)(
\iota^\lambda(\rho^\lambda_a(\rho^{\lambda a}_c(d))),
m_{L\otimes L}(\lambda)((a, e_L)), (c, e_L)))
\\
=&
m_{L\otimes L}(\lambda)(\iota^\lambda(\rho^\lambda_a(b)),
m_{L\otimes L}(\lambda)(
\iota^\lambda(\rho^\lambda_a(\rho^{\lambda a}_c(d))),
(\lambda\backslash((\lambda a)c), e_L)))
\\
=&
m_{L\otimes L}(\lambda)(m_{L\otimes L}(\lambda)(
\iota^\lambda(\rho^\lambda_a(b)),
\iota^\lambda(\rho^\lambda_a(\rho^{\lambda a}_c(d)))),
(\lambda\backslash((\lambda a)c), e_L)))
\\
=&
m_{L\otimes L}(\lambda)(
\iota^\lambda(\rho^\lambda_a(b)\underset{\lambda}{\cdot}
\rho^\lambda_a(\rho^{\lambda a}_c(d))),
(\lambda\backslash((\lambda a)c), e_L)).
\end{align*}
From
\eqref{eq:prop7111}
and
\eqref{eq:prop7113},
the right-hand-side of 
\eqref{eq:prop7114}
is
exactly the same as
\eqref{eq:prop7115},
and the proposition follows.
\end{proof}
\begin{proof}[Proof of Claims $\ref{clm:claim5}$
and $\ref{clm:claim6}$]
We first prove \eqref{eq:prop7111}
in Claim \ref{clm:claim5}.
Because of \eqref{eq:m},
\eqref{eq:shikaku3},
and
the fact that
$(X, m_X)$ is a left $(L, m, \eta)$-module,
the left-hand-side of 
\eqref{eq:prop7111}
is
\[
\rho^\lambda_a(b)\shikaku{\lambda}
{m_X(\lambda)(\lambda\backslash((\lambda a)c), x)}
(\rho^\lambda_{\lambda\backslash((\lambda a)c)}(d)
\shikaku{\lambda}{m_X(\lambda)(\lambda\backslash((\lambda a)c), x)}
(\lambda\backslash(((\lambda a)c)f))),
\]
which coincides with
\[
(\rho^\lambda_a(b)\underset{\lambda}{\cdot}
\rho^\lambda_{\lambda\backslash((\lambda a)c)}(d))
\shikaku{\lambda}{m_X(\lambda)(\lambda\backslash((\lambda a)c), x)}
(\lambda\backslash(((\lambda a)c)f)))
\]
in view of 
\eqref{eq:shikaku1}.
From
\eqref{eq:rholambda},
$\rho^\lambda_a(\rho^{\lambda a}_c(d))
=\rho^\lambda_{\lambda\backslash((\lambda a)c)}(d)$,
and
the above element is exactly the right-hand-side of
\eqref{eq:prop7111} as a result.

The next task is to show
\eqref{eq:prop7113}.
From
\eqref{eq:mAotimesA1}
and
\eqref{eq:xi}--\eqref{eq:sigma},
the left-hand-side of \eqref{eq:prop7113}
is 
\begin{align*}
&\theta_Y(\lambda)((\lambda\backslash((\lambda a)\xi_{\lambda a}((\lambda a)
\backslash\lambda ,b)),
m((\lambda a)\xi_{\lambda a}((\lambda a)\backslash\lambda,
b))
(\eta_{\lambda a}((\lambda a)\backslash\lambda, b), e_L)), 
\\
&\quad
(f, m_X((\lambda b)f)(((\lambda b)f)\backslash(\lambda b), x))),
\end{align*}
which coincides with
\begin{align}
\label{eq:prop7116}
(\lambda\backslash g, m_X(g)(g\backslash((\lambda b)f),
m_X((\lambda b)f)(((\lambda b)f)\backslash(\lambda b), x)))
\end{align}
because of \eqref{eq:deftheta}
and the fact that
$m_X(\lambda b)(f, m_X((\lambda b)f)(((\lambda b)f)\backslash(\lambda b), x))
=x$.
Here,
$g=(\lambda b)(((\lambda b)\backslash
((\lambda a)\xi_{\lambda a}((\lambda a)\backslash\lambda, b)))
\shikaku{\lambda b}{x}f)$.

On account of \eqref{eq:rholambda}
and
\eqref{eq:shikaku3},
$\lambda\backslash g=a\shikaku{\lambda}{m_X(\lambda)(b, x)}(\lambda\backslash
((\lambda b)f))$,
and
the element \eqref{eq:prop7116}
is
exactly the right-hand-side of
\eqref{eq:prop7113}
with the aid of \eqref{eq:m}
and the fact that $(X, m_X)$ is a left $(L, m, \eta)$-module.
This gives \eqref{eq:prop7113},
and the proof of Claim \ref{clm:claim5} is complete.

The map $F^\lambda$ has its inverse $G^\lambda$
defined by
\begin{align*}
&G^\lambda((a, b), ((c, d), (f, x)))
\\
=&
((\lambda\backslash((\lambda a)b),
((\lambda a)b)\backslash(\lambda a)),
((((\lambda a)b)\backslash
((((\lambda a)b)c)d),
\\
&\quad((((\lambda a)b)c)d)\backslash(((\lambda a)b)c)),(f, m_X((((\lambda a)b)c)d)(f, x)))),
\end{align*}
which proves Claim
\ref{clm:claim6}.
\end{proof}
\begin{proposition}\label{prop:thetaYshikaku3}
The correspondence in Proposition $\ref{prop:thetaYshikaku1}$
is the inverse of that in Proposition $\ref{prop:thetaYshikaku2}$
and vice versa.
\end{proposition}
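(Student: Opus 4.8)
The plan is to verify the two round-trips directly, using the defining formula \eqref{eq:deftheta} together with Claims \ref{clm:theta} and \ref{clm:mLotimesL}. \emph{From Proposition \ref{prop:thetaYshikaku2} back to Proposition \ref{prop:thetaYshikaku1}.} Begin with a family $a\shikaku{\lambda}{x}b$ ($\lambda\in H$, $a,b\in L$, $x\in X$) satisfying \eqref{eq:shikaku1}--\eqref{eq:shikaku3}, and let $(Y,\theta_Y)$ be the left $L\underset{\mathrm{tw}}{\otimes}L$-module built from it by \eqref{eq:deftheta}. Running the construction of Proposition \ref{prop:thetaYshikaku1} on $(Y,\theta_Y)$, I would evaluate $\theta_Y(\lambda)(\iota^\lambda(a),(b,m_X(\lambda b)((\lambda b)\backslash\lambda,x)))$ through \eqref{eq:deftheta}. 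Since $\iota^\lambda(a)=(a,(\lambda a)\backslash\lambda)$ by \eqref{eq:iota}, we have $(\lambda a)((\lambda a)\backslash\lambda)=\lambda$, so in \eqref{eq:deftheta} the element playing the role of ``$(\lambda a)b$'' is $\lambda$, that of ``$((\lambda a)b)\backslash(\lambda a)$'' is $a$, and $m_X(\lambda)(b,m_X(\lambda b)((\lambda b)\backslash\lambda,x))=x$ by \eqref{eq:m}, \eqref{eq:leftmod1}, and \eqref{eq:leftmod2} (this is the computation \eqref{eq:prop716}). Hence ``$d$'' in \eqref{eq:deftheta} is $\lambda(a\shikaku{\lambda}{x}b)$, and the first component $\lambda\backslash d$ of $\theta_Y(\lambda)(\iota^\lambda(a),(b,m_X(\lambda b)((\lambda b)\backslash\lambda,x)))$ equals $a\shikaku{\lambda}{x}b$; thus the family recovered from $(Y,\theta_Y)$ is the original one.

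\emph{From Proposition \ref{prop:thetaYshikaku1} back to Proposition \ref{prop:thetaYshikaku2}.} Now start with a left $L\underset{\mathrm{tw}}{\otimes}L$-module $(Y,\theta_Y)$ obeying \eqref{eq:thetamodule1} and \eqref{eq:thetamodule2}, let $a\shikaku{\lambda}{x}b$ be the family it produces via Proposition \ref{prop:thetaYshikaku1}, and let $\theta_Y'$ be defined from that family by \eqref{eq:deftheta}; the goal is $\theta_Y'=\theta_Y$. The key device is the identity $(a,b)=m_{L\otimes L}(\lambda)((b',e_L),\iota^{\lambda b'}(a'))$ with $b'=\lambda\backslash((\lambda a)b)=m(\lambda)(a,b)$ and $a'=((\lambda a)b)\backslash(\lambda a)$, which follows from \eqref{eq:pfiotaandrho} because $\lambda b'=(\lambda a)b$. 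Feeding this into the left $L\underset{\mathrm{tw}}{\otimes}L$-module axiom for $\theta_Y$ (and using $\lambda(b',e_L)=\lambda b'$) gives $\theta_Y(\lambda)((a,b),(c,x))=\theta_Y(\lambda)((b',e_L),\theta_Y(\lambda b')(\iota^{\lambda b'}(a'),(c,x)))$. Writing $x'=m_X(\lambda b')(c,x)$, one has $(c,x)=(c,m_X((\lambda b')c)(((\lambda b')c)\backslash(\lambda b'),x'))$ by \eqref{eq:m}, \eqref{eq:leftmod1}, \eqref{eq:leftmod2}, so Claim \ref{clm:theta} at level $\lambda b'$ turns the inner $\theta_Y$ into $(e,m_X((\lambda b')e)(((\lambda b')e)\backslash(\lambda b'),x'))$ with $e:=a'\shikaku{\lambda b'}{x'}c$, and then \eqref{eq:pfthetaYandshikaku1} of Claim \ref{clm:mLotimesL} yields $\theta_Y(\lambda)((a,b),(c,x))=(\lambda\backslash((\lambda b')e),\,m_X((\lambda b')e)(((\lambda b')e)\backslash\lambda,\,m_X(\lambda)(b',x')))$. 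On the other hand, substituting $((a,b),(c,x))$ into \eqref{eq:deftheta}--\eqref{eq:deftheta2} and using $(\lambda a)b=\lambda b'$, $((\lambda a)b)\backslash(\lambda a)=a'$, $m_X((\lambda a)b)(c,x)=x'$ shows the ``$d$'' there is $(\lambda b')e$, so $\theta_Y'(\lambda)((a,b),(c,x))=(\lambda\backslash((\lambda b')e),\,m_X((\lambda b')e)(((\lambda b')e)\backslash((\lambda b')c),x))$. The first components agree, and for the second I would use $m_X(\lambda)(b',x')=m_X(\lambda)(b',m_X(\lambda b')(c,x))=m_X(\lambda)(\lambda\backslash((\lambda b')c),x)$ (by \eqref{eq:leftmod1}, \eqref{eq:m}) and then one more application of \eqref{eq:leftmod1} to collapse both second components to $m_X((\lambda b')e)(((\lambda b')e)\backslash((\lambda b')c),x)$; hence $\theta_Y'=\theta_Y$, which together with the first part proves the proposition.

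\emph{Where the work is.} Almost all of this is routine manipulation in the left quasigroup $L$ and with the two module laws. The only genuinely nontrivial point is the decomposition $(a,b)=m_{L\otimes L}(\lambda)((b',e_L),\iota^{\lambda b'}(a'))$, which converts an arbitrary pair into the special shape that Claim \ref{clm:theta} and \eqref{eq:pfthetaYandshikaku1} can handle; after that, the step I expect to be most error-prone is checking that the two a priori different expressions for the $X$-component, namely $m_X((\lambda b')e)(((\lambda b')e)\backslash\lambda,m_X(\lambda)(b',x'))$ and $m_X((\lambda b')e)(((\lambda b')e)\backslash((\lambda b')c),x)$, really collapse to the same value, since that is where the module law for $\theta_Y$ and the module law for $m_X$ must be applied in exactly the right order.
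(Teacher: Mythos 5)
Your proof is correct and follows essentially the same route as the paper: the key step in both is the decomposition $(a,b)=m_{L\otimes L}(\lambda)((\lambda\backslash((\lambda a)b),e_L),\iota^{(\lambda a)b}(((\lambda a)b)\backslash(\lambda a)))$ coming from \eqref{eq:pfiotaandrho}, combined with the left $L\underset{\mathrm{tw}}{\otimes}L$-module law and Claims \ref{clm:theta} and \ref{clm:mLotimesL} to reduce everything to inputs on which both maps are explicitly computable. The only differences are cosmetic: you also write out the (easier) family-to-module-to-family round trip, which the paper omits, and you compare $\theta_Y$ against the explicit formula \eqref{eq:deftheta} for $\theta'_Y$ rather than against a module-law decomposition of $\theta'_Y$.
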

\begin{proof}
We only show the correspondence in Proposition \ref{prop:thetaYshikaku2} 
is a left inverse of that in Proposition \ref{prop:thetaYshikaku1}. 
Let $(Y, \theta_Y)$ be a left module of
the twisted monoid
$L\underset{\mathrm{tw}}{\otimes}L$
satisfying \eqref{eq:thetamodule1}
and \eqref{eq:thetamodule2},
and let
$\theta'_Y: (L\otimes L)\otimes Y\to Y$ denote
the morphism of $\setH$
defined by
the right-hand-side of 
\eqref{eq:deftheta},
in which
$a\shikaku{\lambda}{x}b(\in L)$ is
the first component of $\theta_Y(\lambda)(\iota^\lambda(a),
(b, m_X(\lambda b)((\lambda b)\backslash\lambda, x)))$
as was explained in Proposition \ref{prop:thetaYshikaku1}.
We show $\theta'_Y=\theta_Y$.

For the proof,
we make use of the following claim.
\begin{claim}\label{clm:thm714}
For $\lambda\in H(=L), a, b\in L$,
and $x\in X$,
\begin{align}
&\theta'_Y(\lambda)((a, e_L), (b, x))
=\theta_Y(\lambda)((a, e_L), (b, x)),
\label{eq:prop7117}
\\
&\theta'_Y(\lambda)(\iota^\lambda(a), (b, x))
=
\theta_Y(\lambda)(\iota^\lambda(a), (b, x)).
\label{eq:prop7118}
\end{align}
\end{claim}
Assuming this claim for the moment, we complete the proof.

By
\eqref{eq:pfiotaandrho},
$m_{L\otimes L}(\lambda)
((\lambda\backslash((\lambda a)b), e_L), \iota^{(\lambda a)b}
(((\lambda a)b)\backslash(\lambda a)))=(a, b)$
for $\lambda\in H, a, b, c\in L$, and $x\in X$,
and, as a result,
\begin{align*}
&\theta'_Y(\lambda)((a, b), (c, x))
\\
=&
(\theta'_Y(m_{L\otimes L}\otimes 1_Y))(\lambda)
((\lambda\backslash((\lambda a)b), e_L), \iota^{(\lambda a)b}
(((\lambda a)b)\backslash(\lambda a)), (c, x)).
\end{align*}
From Proposition \ref{prop:thetaYshikaku2},
$(Y, \theta'_Y)$ is a left module of
the twisted monoid
$L\underset{\mathrm{tw}}{\otimes}L$,
and,
the right-hand-side of the above equation is consequently
\begin{align*}
&(\theta'_Y(1_{L\otimes L}\otimes\theta'_Y)a_{L\otimes LL\otimes LY})(\lambda)
((\lambda\backslash((\lambda a)b), e_L), \iota^{(\lambda a)b}
(((\lambda a)b)\backslash(\lambda a)), (c, x))
\\
=&
\theta'_Y(\lambda)((\lambda\backslash((\lambda a)b), e_L), 
\theta'_Y((\lambda a)b)(\iota^{(\lambda a)b}
(((\lambda a)b)\backslash(\lambda a)), (c, x))).
\end{align*}

We now apply this argument again,
with $\theta'_Y$ replaced by $\theta_Y$,
to obtain
\begin{align*}
&\theta_Y(\lambda)((a, b), (c, x))
\\
=&
\theta_Y(\lambda)((\lambda\backslash((\lambda a)b), e_L), 
\theta_Y((\lambda a)b)(\iota^{(\lambda a)b}
(((\lambda a)b)\backslash(\lambda a)), (c, x))).
\end{align*}
Claim \ref{clm:thm714}
implies that 
$\theta'_Y(\lambda)((a, b), (c, x))
=\theta_Y(\lambda)((a, b), (c, x))$,
which is the desired conclusion.
\end{proof}
\begin{proof}[Proof of Claim $\ref{clm:thm714}$]
We first prove \eqref{eq:prop7117}.
From
\eqref{eq:shikaku2}
and
\eqref{eq:deftheta},
\[
\theta'_Y(\lambda)((a, e_L), (c, x))
=(\lambda\backslash((\lambda a)c),
m_X((\lambda a)c)(e_L, x))
\]
for $\lambda\in H, a, c\in L$, and $x\in X$.
Since $(X, m_X)$ is a left $(L, m, \eta)$-module,
$m_X((\lambda a)c)(e_L, x)=x$,
and consequently
$\theta'_Y(\lambda)((a, e_L), (c, x))=(\lambda\backslash((\lambda a)c), x)$.

From
\eqref{eq:thetamodule2},
\begin{align*}
\theta_Y(\lambda)((a, e_L), (c, x))
&=
(\theta_Y((1_L\otimes\eta)\otimes 1_Y))(\lambda)
((a, \bullet), (c, x))
\\
&=
((m\otimes 1_X)a_{LLX}^{-1}(r_L\otimes 1_Y))(\lambda)
((a, \bullet), (c, x))
\\
&=
(\lambda\backslash((\lambda a)c), x),
\end{align*}
which gives \eqref{eq:prop7117}.

Let $\lambda\in H, a, b\in L$, and $x\in X$.
Because of \eqref{eq:m}
and the fact that $(X, m_X)$ is a left $(L, m, \eta)$-module,
\[
\theta_Y(\lambda)(\iota^\lambda(a), (b, x))
=\theta_Y(\iota^\lambda(a), (b, m_X(\lambda b)((\lambda b)\backslash\lambda,
m_X(\lambda)(b, x)))),
\]
which is exactly
\begin{equation}
\label{eq:prop719}
(a\shikaku{\lambda}{m_X(\lambda)(b, x)}b,
m_X(\lambda(a\shikaku{\lambda}{m_X(\lambda)(b, x)}b))
((\lambda(a\shikaku{\lambda}{m_X(\lambda)(b, x)}b))\backslash
\lambda, m_X(\lambda)(b, x)))
\end{equation}
by Claim \ref{clm:theta}.

It follows from the definition of $\theta'_Y$
\eqref{eq:deftheta2}
that
$\theta'_Y(\lambda)(\iota^\lambda(a), (b, x))$
coincides with \eqref{eq:prop719},
and \eqref{eq:prop7118} follows.
\end{proof}
We also rephrase the condition that the pair
$(m_Y, m_Y^\sigma)$ defined by $\eqref{eq:mYtensor}$
and
$\eqref{eq:mYsigma}$
braid-commutes
(Definition \ref{def:braid-commute}),
by means of the elements
$a\shikaku{\lambda}{x}b\in L$.
\begin{proposition}\label{prop:equivbraidshikaku}
Let $(Y, \theta_Y)$ be a left module
of the twisted monoid $L\underset{\mathrm{tw}}{\otimes}L$
satisfying $\eqref{eq:thetamodule1}$
and $\eqref{eq:thetamodule2}$
$($For the twisted monoid, see below $\mathrm{Proposition}$
$\ref{prop:twistedmonnoid}$$)$.
We define
$a\shikaku{\lambda}{x}b(\in L)$
$(\lambda\in H, a, b\in L, x\in X)$
by
the first component of $\theta_Y(\lambda)(\iota^\lambda(a),
(b, m_X(\lambda b)((\lambda b)\backslash\lambda, x)))$
$($For 
$\iota^\lambda(a)$, see
$\eqref{eq:iota}$$)$.
In addition,
$m_Y: L\otimes Y\to Y$
and
$m^\sigma_Y: L\otimes Y\to Y$
are defined by
$\eqref{eq:mYtensor}$
and
$\eqref{eq:mYsigma}$
for $A=(L, m, \eta)$,
respectively.
Then the following two conditions are equivalent\/$:$
\begin{enumerate}
\item[$(1)$]
$(m_Y, m_Y^{\sigma})$ braid-commutes
$($See 
$\mathrm{Definition}$ $\ref{def:braid-commute}$$)$\/$;$
\item[$(2)$]
For all $\lambda\in H, a, b, c\in L, x\in X$,
\begin{equation}\label{eq:equivbraidcomm}
a\shikaku{\lambda}{x}(b\underset{\lambda}{\cdot}c)=\lambda\backslash
\pi^{-1}(\pi(\lambda a)\pi(\lambda)^{-1}\pi(\lambda b)\pi(\lambda a)^{-1}
\pi(\lambda(a\shikaku{\lambda}{x}c))).
\end{equation}
\end{enumerate}
\end{proposition}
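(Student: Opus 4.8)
The plan is to rewrite condition~(1) in coordinates and then recognise it as \eqref{eq:equivbraidcomm}. Since $(L,m,\eta,\sigma)$ is a braided monoid (Proposition~\ref{prop:braidedmonoid}), $(X,m_X)$ is a left $L$-module by hypothesis, and $(Y,m_Y)$ with $m_Y$ from \eqref{eq:mYtensor} is a left $L$-module by Theorem~\ref{thm:thetaYmY}, Remark~\ref{rem:braidcommmYmYsigma} shows that (1) is equivalent to \eqref{eq:braidcommmYmYsigma}, i.e.\ to $m_Y(1_L\otimes m_Y^\sigma)=m_Y^\sigma(1_L\otimes m_Y)a_{LLY}(\sigma\otimes 1_Y)a_{LLY}^{-1}$. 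I evaluate both sides on a general element $(a,(b,(c,x)))$ of $L\otimes(L\otimes Y)$ at a base $\lambda\in H$, using: the explicit formula $m_Y^\sigma(\nu)(u,(v,z))=(\xi_\nu(u,v),\,m_X(\nu\xi_\nu(u,v))(\eta_\nu(u,v),z))$ coming from \eqref{eq:mYsigma} and \eqref{eq:sigma}; the identity $m_Y(\nu)(u,(v,z))=\theta_Y(\nu)((e_L,u),(v,z))$ coming from \eqref{eq:mYtensor}, together with the closed form \eqref{eq:deftheta} of $\theta_Y$; the left-module axioms of $(X,m_X)$ to collapse nested actions; and the group-theoretic descriptions $\nu\xi_\nu(u,v)=\pi^{-1}(\pi(\nu)\pi(\nu u)^{-1}\pi((\nu u)v))$ and $(\nu\xi_\nu(u,v))\,\eta_\nu(u,v)=(\nu u)v$.

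A short computation then shows that both sides take the form $\bigl(p,\;m_X(\lambda p)((\lambda p)\backslash(((\lambda a)b)c),x)\bigr)$, with the same endpoint $((\lambda a)b)c\in L$; so \eqref{eq:braidcommmYmYsigma} is equivalent to $p_{\mathrm{LHS}}=p_{\mathrm{RHS}}$ for all $\lambda,a,b,c,x$. Put $\mu=\lambda a$, $s=(\lambda a)\backslash\lambda$ (so $\mu s=\lambda$, $\mu b=(\lambda a)b$), and $w=m_X(\lambda a)((\lambda a)\backslash(((\lambda a)b)c),x)$. Unwinding $\theta_Y$ gives $\lambda p_{\mathrm{LHS}}=\mu\bigl(s\shikaku{\mu}{w}\xi_\mu(b,c)\bigr)$ directly. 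For the right side the first component is $\xi_\lambda(\xi_\lambda(a,b),p_1)$ with $p_1$ the first component of a $\theta_Y$-value; unfolding $p_1$ and invoking \eqref{eq:shikaku3} (at base $\lambda a$, second slot $b$), \eqref{eq:rholambda}, and $\pi(\lambda\xi_\lambda(a,b))=\pi(\lambda)\pi(\lambda a)^{-1}\pi((\lambda a)b)$, the $\rho$-factors collapse to $s$ and one obtains $\lambda p_{\mathrm{RHS}}=\pi^{-1}\bigl(\pi(\mu s)\pi(\mu b)^{-1}\pi(\mu)\pi(\mu s)^{-1}\,\pi(\mu(s\shikaku{\mu}{w}m(\mu)(b,c)))\bigr)$. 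Applying $\pi$, the condition $p_{\mathrm{LHS}}=p_{\mathrm{RHS}}$ becomes
\[
\pi\bigl(\mu(s\shikaku{\mu}{w}\xi_\mu(b,c))\bigr)
=\pi(\mu s)\,\pi(\mu b)^{-1}\,\pi(\mu)\,\pi(\mu s)^{-1}\,\pi\bigl(\mu(s\shikaku{\mu}{w}m(\mu)(b,c))\bigr).
\]

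It remains to match this with \eqref{eq:equivbraidcomm}. Let $\hat b$ be the inverse of $b$ in the group $(L,\underset{\mu}{\cdot})$ (Remark~\ref{rem:dotlambda}); a direct computation in the torsor $\mu_1^G$ yields $\hat b\underset{\mu}{\cdot}m(\mu)(b,c)=\xi_\mu(b,c)$ and $\pi(\mu\hat b)=\pi(\mu)\pi(\mu b)^{-1}\pi(\mu)$. Substituting these, \eqref{eq:equivbraidcomm} at the tuple $(\mu,s,\hat b,m(\mu)(b,c),w)$ is exactly the displayed identity. Moreover each map $m_X(\nu)(q,-)\colon X\to X$ $(\nu,q\in L)$ is a bijection, its inverse being $m_X(\nu q)((\nu q)\backslash\nu,-)$ (by the module axioms and the left-quasigroup property), so $(\lambda,a,b,c,x)\mapsto(\mu,s,\hat b,m(\mu)(b,c),w)$ is a bijection of $H\times L^3\times X$ onto itself; hence the displayed identity holds for all $\lambda,a,b,c,x$ if and only if \eqref{eq:equivbraidcomm} does. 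This establishes $(1)\Leftrightarrow(2)$.

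The main difficulty is the bookkeeping: one must follow the entry $x\in X$ through the iterated actions to see that both sides share the endpoint $((\lambda a)b)c$, so that the whole equation reduces to one identity in $L$; and one must shift base points among $\lambda$, $\lambda a$, $(\lambda a)b$ and $\lambda\xi_\lambda(a,b)$ by repeated use of \eqref{eq:shikaku3}, \eqref{eq:rholambda} and the group formulas for $\xi$, $\eta$ and $\underset{\lambda}{\cdot}$. No single step is deep, but the identity only surfaces after these reductions; the structural point is that $s\shikaku{\mu}{w}\xi_\mu(b,c)$ and $s\shikaku{\mu}{w}m(\mu)(b,c)$, having the same base, the same subscript, and second slots related by a $\underset{\mu}{\cdot}$-product, are precisely the two sides of an instance of \eqref{eq:equivbraidcomm}.
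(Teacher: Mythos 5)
Your argument is correct and follows essentially the same route as the paper's proof: reduce \eqref{eq:braidcommmYmYsigma} to equality of first components (your observation that both sides end at $((\lambda a)b)c$ is exactly the paper's preliminary lemma), use \eqref{eq:shikaku3} together with the collapse of the $\rho$-factors and the group formulas for $\xi$ and $\underset{\lambda}{\cdot}$ to align base points, and then identify the resulting identity in $L$ with \eqref{eq:equivbraidcomm} via a bijective reparametrization of $(\lambda,a,b,c,x)$. The only differences are cosmetic: you normalize to the base $\mu=\lambda a$ and perform one change of variables, where the paper works at base $\lambda$ and chains three successive substitutions (noting they are reversible), which amounts to the same bijectivity argument.
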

\begin{proof}
For $\lambda\in H(=L), a, b, c\in L$,
and $x\in X$,
we write
\begin{align*}
(f_1, y_1)=&(m_Y(1_L\otimes m_Y^{\sigma}))(\lambda)
(a, (b, (c, m_X(((\lambda a)b)c)((((\lambda a)b)c)\backslash((\lambda a)b), x)))),
\\
(f_2, y_2)=&
(m_Y^{\sigma}(1_L\otimes m_Y)a_{LLY}(\sigma\otimes 1_Y)a_{LLY}^{-1})(\lambda)
\\
&\quad(a, (b, (c, m_X(((\lambda a)b)c)((((\lambda a)b)c)\backslash((\lambda a)b), x)))).
\end{align*}
\begin{lemma}
If $f_1=f_2$,
then $y_1=y_2$.
\end{lemma}
\begin{proof}
We note that
Propositions \ref{prop:thetaYshikaku1},
\ref{prop:thetaYshikaku2},
and
\ref{prop:thetaYshikaku3}
induce \eqref{eq:deftheta}.
On account of \eqref{eq:mYtensor} and \eqref{eq:deftheta},
\begin{align}
m_Y(\lambda)(a, (b, x))
=&
(\lambda\backslash p, m_X(p)(p\backslash(\lambda a), m_X(\lambda a)(b, x)))
\label{eq:mYshikakumX}
\\
=&
(\lambda\backslash p, m_X(p)(p\backslash((\lambda a)b), x)).
\notag
\end{align}
Here, 
$p=(\lambda a)(((\lambda a)\backslash\lambda)\shikaku{\lambda a}{m_X(\lambda a)
(b, x)}b)$.
It follows from
\eqref{eq:mYsigma}
and
\eqref{eq:mYshikakumX}
that
\[
(f_1, y_1)
=
(\lambda\backslash p', 
m_X(p')(p'\backslash((\lambda a)b), x)).
\]
Here,
$p'=(\lambda a)(((\lambda a)\backslash\lambda)
\shikaku{\lambda a}{m_X(\lambda a)(b, x)}\xi_{\lambda a}(b, c))$.

We can see in a similar way that
\begin{align*}
&(f_2, y_2)
\\
=&
m_Y^\sigma(\lambda)(\xi_\lambda(a, b),
((\lambda\xi_\lambda(a, b))\backslash q, 
m_X(q)(q\backslash((\lambda a)b), x)))
\\
=&
(\xi_\lambda(\xi_\lambda(a, b), (\lambda\xi_\lambda(a, b))\backslash q),
m_X(\lambda\xi_\lambda(\xi_\lambda(a, b), (\lambda\xi_\lambda(a, b))\backslash q))
\\
&\quad
((\lambda\xi_\lambda(\xi_\lambda(a, b), (\lambda\xi_\lambda(a, b))\backslash q))
\backslash((\lambda a)b), x)).
\end{align*}
Here,
$q=
((\lambda a)b)((((\lambda a)b)\backslash(\lambda\xi_\lambda(a, b)))\shikaku{(\lambda a)b}
{x}c)$.
Hence,
$f_1=f_2$,
if and only if
\begin{equation}\label{eq:prop771}
\lambda\backslash p'
=\xi_\lambda(\xi_\lambda(a, b), (\lambda\xi_\lambda(a, b))\backslash q),
\end{equation}
which immediately induces $y_1=y_2$.
This is our assertion.
\end{proof}
From
this lemma
and the fact that
\begin{align*}
&m_X(((\lambda a)b)c)((((\lambda a)b)c)\backslash((\lambda a)b), m_X((\lambda a)b)
(c, x'))
\\
=&m_X(((\lambda a)b)c)(e_L, x')=x'
\end{align*}
for every $x'\in X$,
the condition $(1)$
in Proposition \ref{prop:equivbraidshikaku}
is equivalent to 
the following condition $(3)$:
\begin{enumerate}
\item[$(3)$]
$f_1=f_2$
for all $\lambda \in H, a, b, c\in L$, and $x\in X$.
\end{enumerate}

Because of Proposition \ref{prop:thetaYshikaku1},
\eqref{eq:shikaku3} holds, and
we can rewrite the both sides of \eqref{eq:prop771}
equivalent to the condition $(3)$
by means of
\eqref{eq:xi},
\eqref{eq:rholambda},
and \eqref{eq:shikaku3}.
\begin{align}
&\lambda\backslash
((\lambda a)(((\lambda a)\backslash\lambda)\shikaku{\lambda a}
{m_X(\lambda a)(b, x)}\xi_{\lambda a}(b, c)))
\label{eq:prop772}
\\
=&
(\lambda\backslash\pi^{-1}(\pi(\lambda)\pi(\lambda a)^{-1}\pi(\lambda)))
\shikaku{\lambda}{m_X(\lambda)(\lambda\backslash((\lambda a)b), x)} 
\notag
\\
&\quad(\lambda\backslash\pi^{-1}(\pi(\lambda a)\pi((\lambda a)b)^{-1}
\pi(((\lambda a)b)c))),
\notag
\\
\label{eq:prop773}
&\xi_\lambda(\xi_\lambda(a, b), (\lambda\xi_\lambda(a, b))\backslash
((\lambda a)b)((((\lambda a)b)\backslash(\lambda\xi_\lambda(a, b)))\shikaku{(\lambda a)b}
{x}c))
\\
=&
\lambda\backslash
\pi^{-1}(\pi(\lambda)\pi((\lambda a)b)^{-1}\pi(\lambda a)\pi(\lambda)^{-1}
\notag
\\
&\quad
\pi(\lambda((\lambda\backslash\pi^{-1}(\pi(\lambda)\pi(\lambda a)^{-1}\pi(\lambda)))
\shikaku{\lambda}{m_X(\lambda)(\lambda\backslash((\lambda a)b), x)}(\lambda
\backslash(((\lambda a)b)c))))),
\notag
\end{align}
and, as a result,
the condition $(3)$ is equivalent to the condition $(4)$:
\begin{enumerate}
\item[$(4)$]
The right-hand-side of \eqref{eq:prop772}
coincides with that of \eqref{eq:prop773} 
for all $\lambda \in H, a, b, c\in L$, and $x\in X$.
\end{enumerate}

This condition $(4)$ is equivalent to
the condition $(2)$.
We prove $(2)$ from $(4)$.
First we respectively substitute
$((\lambda a)b)\backslash(\lambda c')$ for $c$
and $m_X((\lambda a)b)
(((\lambda a)b)\backslash\lambda, x')$ for $x$
in the equation in $(4)$.
Because of \eqref{eq:m}
and the fact that
$(X, m_X)$ is a left $(L, m, \eta)$-module,
\[
m_X(\lambda)(\lambda\backslash((\lambda a)b), m_X((\lambda a)b)
(((\lambda a)b)\backslash\lambda, x')=m_X(\lambda)(e_L, x')=x',
\]
and
consequently
\begin{align}
&(\lambda\backslash\pi^{-1}(\pi(\lambda)\pi(\lambda a)^{-1}\pi(\lambda)))
\shikaku{\lambda}{x'}
(\lambda\backslash\pi^{-1}(\pi(\lambda a)\pi((\lambda a)b)^{-1}
\pi(\lambda c')))
\label{eq:prop781}
\\
=&
\lambda\backslash
\pi^{-1}(\pi(\lambda)\pi((\lambda a)b)^{-1}\pi(\lambda a)\pi(\lambda)^{-1}
\notag
\\
&\quad
\pi(\lambda((\lambda\backslash\pi^{-1}(\pi(\lambda)\pi(\lambda a)^{-1}\pi(\lambda)))
\shikaku{\lambda}{x'}c')))
\notag
\end{align}
for all $\lambda \in H, a, b, c'\in L$, and $x'\in X$.
By substituting
$(\lambda a)\backslash
\pi^{-1}(\pi(\lambda)\pi(\lambda b')^{-1}
\pi(\lambda a))$
for $b$ 
in \eqref{eq:prop781},
we deduce
that
\begin{align}
&(\lambda\backslash\pi^{-1}(\pi(\lambda)\pi(\lambda a)^{-1}\pi(\lambda)))
\shikaku{\lambda}{x}
(\lambda\backslash\pi^{-1}(\pi(\lambda b')\pi(\lambda)^{-1}
\pi(\lambda c)))
\label{eq:prop782}
\\
=&
\lambda\backslash
\pi^{-1}(\pi(\lambda)\pi(\lambda a)^{-1}\pi(\lambda b')\pi(\lambda)^{-1}
\notag
\\
&\quad
\pi(\lambda a)\pi(\lambda)^{-1}
\pi(\lambda((\lambda\backslash\pi^{-1}(\pi(\lambda)\pi(\lambda a)^{-1}\pi(\lambda)))
\shikaku{\lambda}{x}c)))
\notag
\end{align}
for all $\lambda \in H, a, b', c\in L$, and $x\in X$.
Substituting $\lambda\backslash\pi^{-1}(\pi(\lambda)
\pi(\lambda a')^{-1}\pi(\lambda))$ for $a$ in \eqref{eq:prop782}
yields 
the condition $(2)$.

We can follow the steps above in reverse,
and  the condition $(2)$
implies $(4)$.
This proves the proposition.
\end{proof}

Proposition \ref{prop:equivbraidshikaku}
immediately induces 
the following corollary on account of Theorems
\ref{thm:mYthetaY},
\ref{thm:thetaYmY},
Corollary \ref{cor:mYthetaY},
and Propositions \ref{prop:thetaYshikaku1},
\ref{prop:thetaYshikaku2},
\ref{prop:thetaYshikaku3}.
\begin{corollary}\label{cor:equivmYshikaku}
$(1)$
Let  $(Y, m_Y)$ be
a left $(L, m, \eta)$-module satisfying
$\eqref{eq:mXmY}$ and that
two pairs
$(m_Y, m_Y^{\mathrm{triv}})$ and
$(m_Y, m_Y^{\sigma})$ braid-commute respectively.
For $\lambda\in H, a, b\in L, x\in X$,
let $a\shikaku{\lambda}{x} b(\in L)$ denote
the first component of $\theta_Y(\lambda)(\iota^\lambda(a), (b,
m_X(\lambda b)((\lambda b)\backslash\lambda, x)))$
$($For 
$\theta_Y: (L\otimes L)\otimes Y\to Y\in\setH$, see
$\eqref{eq:theta}$$)$.
Then they enjoy 
$\eqref{eq:shikaku1}$,
$\eqref{eq:shikaku2}$,
$\eqref{eq:shikaku3}$,
and $\eqref{eq:equivbraidcomm}$
for all $\lambda\in H, a, b, c\in L, x\in X$.

\noindent{$(2)$}
Let
$a\shikaku{\lambda}{x}b$
$(\lambda\in H, a, b\in L, x\in X)$
be elements of $L$ satisfying
$\eqref{eq:shikaku1}$,
$\eqref{eq:shikaku2}$,
$\eqref{eq:shikaku3}$,
and $\eqref{eq:equivbraidcomm}$
for all $\lambda\in H, a, b, c\in L, x\in X$.
Then 
$(Y, m_Y)$ defined by $\eqref{eq:mY}$ is a left $(L, m, \eta)$-module satisfying
$\eqref{eq:mXmY}$ and that
two pairs
$(m_Y, m_Y^{\mathrm{triv}})$ and
$(m_Y, m_Y^{\sigma})$ braid-commute respectively.

\noindent{$(3)$}
The correspondence in $(1)$
is the inverse of that in $(2)$
and vice versa.
\end{corollary}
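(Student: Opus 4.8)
The plan is to derive Corollary \ref{cor:equivmYshikaku} purely by composing the bijective correspondences already established in this section, so that essentially no new computation is needed. Fix the braided monoid $(L, m, \eta, \sigma)$ of Proposition \ref{prop:braidedmonoid} and the left $(L, m, \eta)$-module $(X, m_X)$, and set $Y = L\otimes X$. First I would record, from Theorems \ref{thm:mYthetaY} and \ref{thm:thetaYmY} together with Corollary \ref{cor:mYthetaY}, that $m_Y \mapsto \theta_Y = m_Y^{\mathrm{triv}}(1_L\otimes m_Y)a_{LLY}$ is a bijection between the left $(L, m, \eta)$-modules $(Y, m_Y)$ satisfying \eqref{eq:mXmY} for which $(m_Y, m_Y^{\mathrm{triv}})$ braid-commutes, and the left $L\underset{\mathrm{tw}}{\otimes}L$-modules $(Y, \theta_Y)$ satisfying \eqref{eq:thetamodule1} and \eqref{eq:thetamodule2}, the inverse sending $\theta_Y$ to $m_Y = \theta_Y(((\eta\otimes 1_L)l_L^{-1})\otimes 1_Y)$ as in \eqref{eq:mYtensor}. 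Here Remark \ref{rem:braidcommmYmYtriv} identifies ``$(m_Y, m_Y^{\mathrm{triv}})$ braid-commutes'' with the relation \eqref{eq:braidcommmYmYtriv} that those theorems use.

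Next, from Propositions \ref{prop:thetaYshikaku1}, \ref{prop:thetaYshikaku2}, and \ref{prop:thetaYshikaku3}, I would record that reading off $a\shikaku{\lambda}{x}b$ as the first component of $\theta_Y(\lambda)(\iota^\lambda(a), (b, m_X(\lambda b)((\lambda b)\backslash\lambda, x)))$ is a bijection between such $(Y, \theta_Y)$ and the families $\{a\shikaku{\lambda}{x}b \in L\}$ satisfying \eqref{eq:shikaku1}, \eqref{eq:shikaku2}, and \eqref{eq:shikaku3}, with inverse given by the formula \eqref{eq:deftheta}. Composing the two bijections yields the claimed correspondence for the relations \eqref{eq:shikaku1}--\eqref{eq:shikaku3}, and at the same time pins down the $m_Y$ of part $(2)$: substituting \eqref{eq:deftheta} into \eqref{eq:mYtensor} and using $(\lambda e_L)a = \lambda a$ reproduces the formula \eqref{eq:mY} in Theorem \ref{theorem:grouphomo1} verbatim.

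Finally, to incorporate the fourth relation I would invoke Proposition \ref{prop:equivbraidshikaku}: under the standing hypotheses \eqref{eq:thetamodule1}--\eqref{eq:thetamodule2}, equivalently \eqref{eq:shikaku1}--\eqref{eq:shikaku3}, the pair $(m_Y, m_Y^{\sigma})$ (with $m_Y$ from \eqref{eq:mYtensor} and $m_Y^{\sigma}$ from \eqref{eq:mYsigma}, and using Remark \ref{rem:braidcommmYmYsigma} to turn braid-commutativity into \eqref{eq:braidcommmYmYsigma}) braid-commutes if and only if \eqref{eq:equivbraidcomm} holds. Adjoining this equivalence to the composite bijection gives parts $(1)$ and $(2)$ in full, and since each constituent correspondence is a two-sided inverse of its counterpart, part $(3)$ follows at once. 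I expect the only real difficulty to be bookkeeping at the three joints: making sure ``braid-commutes'' is in each case the exact equation the cited result needs, that the $\theta_Y$ of \eqref{eq:theta} used in Proposition \ref{prop:equivbraidshikaku} agrees with the one used in Theorems \ref{thm:mYthetaY}--\ref{thm:thetaYmY}, and that the family $\{a\shikaku{\lambda}{x}b\}$ carries the relations \eqref{eq:shikaku1}--\eqref{eq:shikaku3} and \eqref{eq:equivbraidcomm} consistently through the composition. Once the hypotheses are aligned, the corollary is a formal consequence.
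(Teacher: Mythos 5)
Your proposal is correct and matches the paper's own argument: the corollary is stated there as an immediate consequence of Theorems \ref{thm:mYthetaY}, \ref{thm:thetaYmY}, Corollary \ref{cor:mYthetaY}, and Propositions \ref{prop:thetaYshikaku1}--\ref{prop:thetaYshikaku3} combined with Proposition \ref{prop:equivbraidshikaku}, i.e.\ exactly the composition of bijective correspondences you describe. Your extra check that substituting \eqref{eq:deftheta} into \eqref{eq:mYtensor} recovers \eqref{eq:mY} is the same computation the paper carries out as \eqref{eq:mYshikakumX}.
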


The following proposition states that
$a\shikaku{\lambda}{x}b\in L$
are recovered by
the elements $a\shikaku{\lambda}{x}e_L$.
\begin{proposition}\label{prop:equivshikakubeta}
$(1)$
If elements
$a\shikaku{\lambda}{x}b\in L$
$(\lambda\in H, a, b\in L, x\in X)$
satisfy
$\eqref{eq:shikaku1}$,
$\eqref{eq:shikaku2}$,
$\eqref{eq:shikaku3}$,
and $\eqref{eq:equivbraidcomm}$
for any $\lambda\in H, a, b, c\in L, x\in X$,
then the elements
$\beta^\lambda_x(a)=a\shikaku{\lambda}{x}e_L\in L$
$(\lambda\in H, a\in L, x\in X)$
enjoy\/$:$
\begin{align}
\label{eq:beta1}
&\beta^\lambda_x(a\underset{\lambda}{\cdot}b)=
\lambda\backslash\pi^{-1}(\pi(\lambda a)
\pi(\lambda)^{-1}\pi(\lambda\beta^\lambda_x(b))\pi(\lambda a)^{-1}
\pi(\lambda\beta^\lambda_x(a))),
\\
\label{eq:beta2}
&(\lambda b)\beta^{\lambda b}_{m_X(\lambda b)((\lambda b)
\backslash\lambda, x)}(\rho^{\lambda b}_{(\lambda b)\backslash\lambda}(a))
\\
\notag
&\quad
=
\pi^{-1}(\pi(\lambda a)\pi(\lambda)^{-1}\pi(\lambda b)
\pi(\lambda a)^{-1}\pi(\lambda\beta^\lambda_x(a)))
\end{align}
for all $\lambda\in H, a, b\in L, x\in X$.
Here, $e_L$ is the unit element of the left quasigroup $(L, \cdot)$.

\noindent{$(2)$}
We assume that elements $\beta^\lambda_x(a)\in L$ satisfy
$\eqref{eq:beta1}$
and
$\eqref{eq:beta2}$
for all $\lambda\in H, a, b\in L, x\in X$.
We write
\begin{align}
&a\shikaku{\lambda}{x}b=\lambda\backslash\pi^{-1}(\pi(\lambda a)
\pi(\lambda)^{-1}\pi(\lambda b)\pi(\lambda a)^{-1}
\pi(\lambda\beta^\lambda_x(a)))
\label{eq:shikakubeta1}
\\
&\ \;\quad(=\lambda\backslash((\lambda b)\beta^{\lambda b}_{m_X(\lambda b)
((\lambda b)\backslash\lambda, x)}
(\rho^{\lambda b}_{(\lambda b)\backslash\lambda}(a))))
\label{eq:shikakubeta2}
\end{align}
for $\lambda\in H, a, b\in L, x\in X$.
Then the elements
$a\shikaku{\lambda}{x}b$
satisfy
$\eqref{eq:shikaku1}$,
$\eqref{eq:shikaku2}$,
$\eqref{eq:shikaku3}$,
and $\eqref{eq:equivbraidcomm}$
for all $\lambda\in H, a, b, c\in L, x\in X$.

\noindent{$(3)$}
The correspondence in $(1)$
is the inverse of that in $(2)$
and vice versa.
\end{proposition}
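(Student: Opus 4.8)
The plan is to use the explicit formula \eqref{eq:shikakubeta1} as the bridge between the two descriptions. I will show that, whenever \eqref{eq:shikaku1}--\eqref{eq:shikaku3} and \eqref{eq:equivbraidcomm} hold, the $a\shikaku{\lambda}{x}b$ are \emph{forced} to be given by \eqref{eq:shikakubeta1}, and conversely that \eqref{eq:shikakubeta1}, together with \eqref{eq:beta1} and \eqref{eq:beta2}, reconstructs a system obeying all four relations; part $(3)$ then comes for free.

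For part $(1)$: substituting $c=e_L$ into \eqref{eq:equivbraidcomm} and using $b\underset{\lambda}{\cdot}e_L=b$ and $a\shikaku{\lambda}{x}e_L=\beta^\lambda_x(a)$ yields exactly \eqref{eq:shikakubeta1}. Feeding $b=\beta^\lambda_x(b)$ into that identity and combining with \eqref{eq:shikaku1} taken at $c=e_L$ (so that $\beta^\lambda_x(a\underset{\lambda}{\cdot}b)=(a\underset{\lambda}{\cdot}b)\shikaku{\lambda}{x}e_L=a\shikaku{\lambda}{x}\beta^\lambda_x(b)$), then rewriting $\pi(\lambda(a\underset{\lambda}{\cdot}b))=\pi(\lambda a)\pi(\lambda)^{-1}\pi(\lambda b)$ via \eqref{eq:dotlambda}, gives \eqref{eq:beta1}. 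For \eqref{eq:beta2} I set $c=e_L$ in \eqref{eq:shikaku3}, whose right-hand side becomes $\rho^\lambda_b(a)\shikaku{\lambda}{m_X(\lambda)(b,x)}b$; then I substitute $a\mapsto\rho^{\lambda b}_{(\lambda b)\backslash\lambda}(a)$ and $x\mapsto m_X(\lambda b)((\lambda b)\backslash\lambda,x)$, using the special case $\rho^\lambda_b(\rho^{\lambda b}_{(\lambda b)\backslash\lambda}(a))=\rho^\lambda_{e_L}(a)=a$ of the composition law $\rho^\lambda_a(\rho^{\lambda a}_c(d))=\rho^\lambda_{\lambda\backslash((\lambda a)c)}(d)$ from the proof of Claim \ref{clm:claim5}, and the left-module identity $m_X(\lambda)(b,m_X(\lambda b)((\lambda b)\backslash\lambda,x))=m_X(\lambda)(e_L,x)=x$. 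This produces \eqref{eq:shikakubeta2}; applying $\lambda\cdot(-)$ to the equality of \eqref{eq:shikakubeta1} and \eqref{eq:shikakubeta2} is precisely \eqref{eq:beta2}.

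For part $(2)$, I take \eqref{eq:shikakubeta1} as the definition. Applying $\lambda\cdot(-)$ to the hypothesis \eqref{eq:beta2} shows \eqref{eq:shikakubeta1} $=$ \eqref{eq:shikakubeta2}, so both formulas describe $a\shikaku{\lambda}{x}b$, which is what makes \eqref{eq:shikaku3} work below. Putting $a=b=e_L$ in \eqref{eq:beta1} forces $\pi(\lambda\beta^\lambda_x(e_L))=\pi(\lambda)$, hence $\beta^\lambda_x(e_L)=e_L$ because $b\mapsto\lambda b$ is a bijection of $L$; then \eqref{eq:shikaku2} is immediate from \eqref{eq:shikakubeta1}. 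The relations \eqref{eq:shikaku1} and \eqref{eq:equivbraidcomm} are pure identities in $G$: writing $A=\pi(\lambda a)$, $B=\pi(\lambda b)$, $C=\pi(\lambda c)$, $Q=\pi(\lambda)$, $\alpha=\pi(\lambda\beta^\lambda_x(a))$, $\beta=\pi(\lambda\beta^\lambda_x(b))$, one checks (using \eqref{eq:beta1} to expand $\pi(\lambda\beta^\lambda_x(a\underset{\lambda}{\cdot}b))$) that both sides of \eqref{eq:shikaku1} have $\pi$-image $AQ^{-1}BQ^{-1}CB^{-1}\beta A^{-1}\alpha$, and both sides of \eqref{eq:equivbraidcomm} have $\pi$-image $AQ^{-1}BQ^{-1}CA^{-1}\alpha$. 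For \eqref{eq:shikaku3} I evaluate both sides via \eqref{eq:shikakubeta2} rather than \eqref{eq:shikakubeta1}: with the common inner base point $(\lambda b)c$, the composition law for $\rho$ (giving $\rho^{(\lambda b)c}_{((\lambda b)c)\backslash\lambda}(\rho^\lambda_b(a))=\rho^{(\lambda b)c}_{((\lambda b)c)\backslash(\lambda b)}(a)$) and the left-module axiom for $m_X$ (giving $m_X((\lambda b)c)(((\lambda b)c)\backslash\lambda,m_X(\lambda)(b,x))=m_X((\lambda b)c)(((\lambda b)c)\backslash(\lambda b),x)$) collapse both sides to $\lambda\backslash\big(((\lambda b)c)\,\beta^{(\lambda b)c}_{m_X((\lambda b)c)(((\lambda b)c)\backslash(\lambda b),x)}(\rho^{(\lambda b)c}_{((\lambda b)c)\backslash(\lambda b)}(a))\big)$.

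Finally, part $(3)$ is short. If one starts from a $\shikaku$-system, part $(1)$ already shows it agrees with \eqref{eq:shikakubeta1} built from $\beta^\lambda_x(a)=a\shikaku{\lambda}{x}e_L$, so applying $(2)$ after $(1)$ returns it; if one starts from a $\beta$-system, putting $b=e_L$ in \eqref{eq:shikakubeta1} gives $a\shikaku{\lambda}{x}e_L=\lambda\backslash\pi^{-1}(\pi(\lambda\beta^\lambda_x(a)))=\beta^\lambda_x(a)$, so applying $(1)$ after $(2)$ returns it. The step I expect to be the main obstacle is \eqref{eq:shikaku3} in part $(2)$ --- equivalently, obtaining \eqref{eq:shikakubeta2} in part $(1)$ --- since it is the only relation that simultaneously entangles the group multiplication of $G$, the left division of the left quasigroup $L$, the maps $\rho$, and the module action $m_X$; the care needed is to invoke \eqref{eq:shikakubeta2} (not \eqref{eq:shikakubeta1}) and to apply the $\rho$-composition law and the left-module axioms in the correct order, after which the remaining work is a one-line substitution or a mechanical computation inside $G$.
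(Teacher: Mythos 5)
Your proposal is correct and follows essentially the same route as the paper: specialize the second (resp.\ third) argument to $e_L$ in \eqref{eq:equivbraidcomm} and \eqref{eq:shikaku3} to obtain \eqref{eq:shikakubeta1} and \eqref{eq:shikakubeta2}, extract \eqref{eq:beta1}--\eqref{eq:beta2} from these together with \eqref{eq:shikaku1}, and then verify the four relations by direct computation in $G$ (using \eqref{eq:shikakubeta2}, the $\rho$-composition law, and the left-module axiom for \eqref{eq:shikaku3}), with the inversion checks in $(3)$ reducing to the $b=e_L$ and $c=e_L$ specializations. The only differences are cosmetic (e.g.\ you prove \eqref{eq:shikaku2} from \eqref{eq:shikakubeta1} rather than \eqref{eq:shikakubeta2}, and you are more explicit than the paper about needing \eqref{eq:beta1} to verify \eqref{eq:shikaku1}).
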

\begin{proof}
We only prove $(1)$ and $(2)$.

\noindent{$(1)$}
Let us first show \eqref{eq:beta1}. 
Substituting $e_L$ for $c$ in \eqref{eq:equivbraidcomm}
yields
\begin{equation}
a \shikaku{\lambda}{x} b
=  \lambda \backslash \pi^{-1}(\pi(\lambda a)\pi(\lambda)^{-1}\pi(\lambda b) \pi(\lambda a)^{-1}\pi(\lambda \beta^{\lambda}_{x}(a)))
\label{eq:pfshikakuandbeta1}
\end{equation} 
for $\lambda\in H(=L), a, b\in L$,
and $x\in X$,
because
$a\underset{\lambda}{\cdot}e_L=a$
by
the definition
\eqref{eq:dotlambda}.
It follows from \eqref{eq:dotlambda} 
and
\eqref{eq:pfshikakuandbeta1}
that
\begin{align*}
&(a\underset{\lambda}{\cdot}b)  \shikaku{\lambda}{x} c \\
=& \lambda \backslash \pi^{-1}(\pi(\lambda a)\pi(\lambda)^{-1}\pi(\lambda b)
\pi(\lambda )^{-1}\pi(\lambda c)\pi(\lambda b)^{-1}\pi(\lambda )
\pi(\lambda a)^{-1}\pi(\lambda \beta^{\lambda}_{x}(a\underset{\lambda}{\cdot}b))), 
\\
& a \shikaku{\lambda}{x} (b \shikaku{\lambda}{x} c) 
\\
=& \lambda \backslash \pi^{-1}(\pi(\lambda a)\pi(\lambda)^{-1}\pi(\lambda b)
\pi(\lambda )^{-1}\pi(\lambda c)\pi(\lambda b)^{-1}
\pi(\lambda \beta^{\lambda}_{x}(b))\pi(\lambda a)^{-1}
\pi(\lambda \beta^{\lambda}_{x}(a)))
\end{align*}
for all $\lambda\in H, a, b, c\in L$, and $x\in X$. 
On account of
\eqref{eq:shikaku1}, we get \eqref{eq:beta1}.

The next task is to show \eqref{eq:beta2}. 
For the proof, 
we use the fact that \eqref{eq:shikaku3} is equivalent to the following:
for $\lambda\in H, a, b, c\in L$, and $x\in X$,
\begin{equation}
a \shikaku{\lambda}{x} (\lambda \backslash ((\lambda b) c)) 
= \lambda \backslash 
((\lambda b)(\rho^{\lambda b}_{(\lambda b) \backslash \lambda}(a) 
\shikaku{\lambda b}{m_{X}(\lambda b)((\lambda b) \backslash \lambda, x)}c)). 
\label{eq:shikaku3prime}
\end{equation}
We show
that \eqref{eq:shikaku3} induces
\eqref{eq:shikaku3prime} only.
Because of \eqref{eq:m}
and the fact that
$(X, m_X)$ is a left $(L, m, \eta)$-module,
$m_X(\lambda)(b, m_X(\lambda b)((\lambda b)\backslash\lambda, x))=x$,
and
the right-hand-side of \eqref{eq:shikaku3prime}
is consequently
$\rho^\lambda_b(\rho^{\lambda b}_{(\lambda b)\backslash\lambda}(a))
\shikaku{\lambda}{x}(\lambda\backslash((\lambda b)c))$
owing to
\eqref{eq:shikaku3}.
This element is exactly 
the left-hand-side of
\eqref{eq:shikaku3prime},
since
$\rho^\lambda_b(\rho^{\lambda b}_{(\lambda b)\backslash\lambda}(a))
=a$
by \eqref{eq:rholambda},
and this is our claim.

We now proceed the proof of \eqref{eq:beta2}. 
By substituting $e_L$ for $c$ in \eqref{eq:shikaku3prime}, 
we obtain
\begin{align}
a \shikaku{\lambda}{x} b =&  
\lambda \backslash 
((\lambda b)(\rho^{\lambda b}_{(\lambda b) \backslash \lambda}(a) 
\shikaku{\lambda b}{m_{X}(\lambda b)((\lambda b) \backslash \lambda, x)}e_{L}))
\label{eq:pfshikakuandbeta2} \\
=& \lambda \backslash 
((\lambda b) \beta^{\lambda b}_{m_{X}(\lambda b)((\lambda b) \backslash \lambda, x)}
(\rho^{\lambda b}_{(\lambda b) \backslash \lambda}(a))),
\notag
\end{align}
and combining
\eqref{eq:pfshikakuandbeta1}
and
\eqref{eq:pfshikakuandbeta2}
yields
\eqref{eq:beta2}.

\noindent{$(2)$}
A straightforward computation
with the aid of
\eqref{eq:dotlambda}
and
\eqref{eq:shikakubeta1} shows 
\eqref{eq:shikaku1} easily.

Substituting $e_L$ for $a$ and $b$
in \eqref{eq:beta1} yields
$\beta^\lambda_x(e_L)=e_L$
for
all $\lambda\in H$ and $x\in X$,
and
$\rho^{\lambda b}_{(\lambda b)\backslash\lambda}(e_L)=e_L$
by the definition
\eqref{eq:rholambda}.
Combining these,
we obtain \eqref{eq:shikaku2}.

Since $(X, m_X)$ is a left $(L, m, \eta)$-module,
$m_X((\lambda b)c)(((\lambda b)c)\backslash(\lambda b), x)
=
m_X((\lambda b)c)(((\lambda b)c)\backslash\lambda, m_X(\lambda)(b, x))$
for $\lambda \in H, b, c\in L$, and $x\in X$,
in view of \eqref{eq:m}.
In addition,
$\rho^{(\lambda b)c}_{((\lambda b)c)\backslash(\lambda b)}(a)=
\rho^{(\lambda b)c}_{((\lambda b)c)\backslash\lambda}(\rho^\lambda_b(a))$
by the definition
\eqref{eq:rholambda}.
Combining these with \eqref{eq:pfshikakuandbeta2}
yields
\eqref{eq:shikaku3}.

The task is now to show \eqref{eq:equivbraidcomm}.
From \eqref{eq:dotlambda}
and
\eqref{eq:shikakubeta1}, 
\[
a \shikaku{\lambda}{x}(b\underset{\lambda}{\cdot}c) 
= \lambda \backslash \pi^{-1}(\pi(\lambda a)\pi(\lambda)^{-1}\pi(\lambda b)
\pi(\lambda )^{-1}\pi(\lambda c)\pi(\lambda a)^{-1}
\pi(\lambda  \beta^{\lambda}_{x}(a)))
\]
for all $\lambda\in H, a, b, c\in L$, 
and $x\in X$. 
The right-hand-side of the above equation coincides with
that of \eqref{eq:equivbraidcomm},
since
\[
\pi(\lambda c)\pi(\lambda a)^{-1}\pi(\lambda \beta^{\lambda}_{x}(a)) = \pi(\lambda)\pi(\lambda a)^{-1}\pi(\lambda (a \shikaku{\lambda}{x}c))
\]
on account of 
\eqref{eq:shikakubeta1},
and $(2)$ is therefore proved.
\end{proof}
We now introduce
$\Pi^\lambda_x(a)\in L$
instead of the elements
$\beta^\lambda_x(a)\in L$.
\begin{proposition}\label{prop:equivbetaPi}
\noindent{$(1)$}
If elements
$\beta^\lambda_x(a)\in L$ 
$(\lambda\in H, a\in L, x\in X)$
satisfy
$\eqref{eq:beta1}$
and
$\eqref{eq:beta2}$
for any $\lambda\in H, a, b\in L, x\in X$,
then the elements $\Pi^\lambda_x(a)\in L$
$(\lambda\in H, a\in L, x\in X)$
defined by
\begin{equation}\label{eq:defPilambda}
\Pi^\lambda_x(a)=\lambda\backslash\pi^{-1}(\pi(\lambda)
\pi(\lambda\beta^\lambda_x(a))^{-1}\pi(\lambda a))\in L
\end{equation} 
enjoy
\begin{align}
\label{eq:Pi1}
&\Pi^\lambda_x(a\underset{\lambda}{\cdot}b)=
\Pi^\lambda_x(a)\underset{\lambda}{\cdot}\Pi^\lambda_x(b),
\\
\label{eq:Pi2}
&\Pi^\lambda_x(\lambda\backslash((\lambda a)b))
\\
=&\lambda\backslash\pi^{-1}(\pi(\lambda)\pi(\lambda a)^{-1}
 \pi((\lambda a)\Pi^{\lambda a}_{m_X(\lambda a)((\lambda a)\backslash\lambda, x)}(b))
\pi(\lambda)^{-1}\pi(\lambda\Pi^\lambda_x(a)))
\notag
\end{align}
for all $\lambda\in H, a, b\in L, x\in X$.

\noindent{$(2)$}
If elements $\Pi^\lambda_x(a)\in L$ satisfy
$\eqref{eq:Pi1}$
and
$\eqref{eq:Pi2}$
for any $\lambda\in H, a, b\in L, x\in X$,
then the elements 
\begin{equation}\label{eq:prop71111}
\beta^\lambda_x(a)=\lambda\backslash\pi^{-1}(\pi(\lambda a)
\pi(\lambda\Pi^\lambda_x(a))^{-1}\pi(\lambda))\in L
\end{equation}
enjoy
$\eqref{eq:beta1}$
and
$\eqref{eq:beta2}$
for all $\lambda\in H, a, b\in L, x\in X$.

\noindent{$(3)$}
The correspondence in $(1)$
is the inverse of that in $(2)$
and vice versa.
\end{proposition}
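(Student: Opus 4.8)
The plan is to reduce every assertion to elementary computations in the group $G$, carried out by applying $\pi$ and its inverse and using that for each $\lambda$ the map $b\mapsto\lambda b$ is a bijection of $L$ (so that all cancellations can be run in either direction), together with the identity $\pi(\lambda(a\underset{\lambda}{\cdot}b))=\pi(\lambda a)\pi(\lambda)^{-1}\pi(\lambda b)$, which is just \eqref{eq:dotlambda} unwound through \eqref{eq:mu1G}. The structural point is that \eqref{eq:defPilambda} and \eqref{eq:prop71111} are \emph{mutually inverse substitutions}: writing $\Lambda=\pi(\lambda)$, $A=\pi(\lambda a)$, $B=\pi(\lambda\beta^\lambda_x(a))$, $P=\pi(\lambda\Pi^\lambda_x(a))$, equation \eqref{eq:defPilambda} reads $P=\Lambda B^{-1}A$ and \eqref{eq:prop71111} reads $B=AP^{-1}\Lambda$, and indeed $\Lambda(AP^{-1}\Lambda)^{-1}A=P$ and $A(\Lambda B^{-1}A)^{-1}\Lambda=B$. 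Hence, once we know that the defining relations correspond to one another under this reciprocal pair of substitutions, part (3) is immediate, and parts (1) and (2) are just the two directions of that correspondence.

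First I would treat \eqref{eq:beta1} versus \eqref{eq:Pi1}. Apply $\pi(\lambda\,\cdot\,)$ to \eqref{eq:beta1}, then substitute $\pi(\lambda\beta^\lambda_x(c))=\pi(\lambda c)\pi(\lambda\Pi^\lambda_x(c))^{-1}\pi(\lambda)$ for $c=a,\,b,\,a\underset{\lambda}{\cdot}b$, and use $\pi(\lambda(a\underset{\lambda}{\cdot}b))=\pi(\lambda a)\pi(\lambda)^{-1}\pi(\lambda b)$. All factors $\pi(\lambda a)^{\pm1}$ and the interior $\pi(\lambda)^{\pm1}$ telescope away, and after inverting one is left with $\pi(\lambda\Pi^\lambda_x(a\underset{\lambda}{\cdot}b))=\pi(\lambda\Pi^\lambda_x(a))\pi(\lambda)^{-1}\pi(\lambda\Pi^\lambda_x(b))$, which is \eqref{eq:Pi1} by the same $\underset{\lambda}{\cdot}$\,-identity together with the injectivity of $c\mapsto\pi(\lambda c)$. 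Every step reverses, so \eqref{eq:beta1}$\Leftrightarrow$\eqref{eq:Pi1}.

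Next comes \eqref{eq:beta2} versus \eqref{eq:Pi2}, which I would carry out assuming \eqref{eq:Pi1} (equivalently \eqref{eq:beta1}). The sub-identity to isolate first is that \eqref{eq:rholambda} yields $\pi\!\left((\lambda b)\rho^{\lambda b}_{(\lambda b)\backslash\lambda}(a)\right)=\pi(\lambda a)\pi(\lambda)^{-1}\pi(\lambda b)$, since $(\lambda b)\bigl((\lambda b)\backslash\lambda\bigr)=\lambda$; the $X$-parameter $m_X(\lambda b)\bigl((\lambda b)\backslash\lambda,x\bigr)$ is simply carried along. Substituting \eqref{eq:prop71111} at the base points $\lambda$ and $\lambda b$ into both sides of \eqref{eq:beta2}, applying $\pi$, and cancelling the common prefix $\pi(\lambda a)\pi(\lambda)^{-1}\pi(\lambda b)$ turns \eqref{eq:beta2} into the equivalent identity
\[
\pi(\lambda b)^{-1}\,\pi\!\left((\lambda b)\,\Pi^{\lambda b}_{m_X(\lambda b)((\lambda b)\backslash\lambda,\,x)}\!\bigl(\rho^{\lambda b}_{(\lambda b)\backslash\lambda}(a)\bigr)\right)
=\pi(\lambda)^{-1}\,\pi\!\left(\lambda\,\Pi^\lambda_x(a)\right).
\]
Applying this with $a$ replaced by $\lambda\backslash((\lambda a)b)$ and $b$ replaced by $a$, evaluating $\rho^{\lambda a}_{(\lambda a)\backslash\lambda}\bigl(\lambda\backslash((\lambda a)b)\bigr)$ from \eqref{eq:rholambda}, and then using the homomorphism property \eqref{eq:Pi1} of $\Pi^{\lambda a}_{\bullet}$ (together with the instance of the displayed identity in which $b$ is replaced by $a$, which gives $\pi(\lambda a)^{-1}\pi\!\left((\lambda a)\Pi^{\lambda a}_{m_X(\lambda a)((\lambda a)\backslash\lambda,x)}(a)\right)=\pi(\lambda)^{-1}\pi\!\left(\lambda\Pi^\lambda_x(a)\right)$) to split the product, one arrives at \eqref{eq:Pi2}; each step reverses, so \eqref{eq:beta2}$\Leftrightarrow$\eqref{eq:Pi2} in the presence of \eqref{eq:Pi1}. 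Combining the two equivalences proves (1) and (2), and (3) follows from the reciprocity of \eqref{eq:defPilambda} and \eqref{eq:prop71111} noted above.

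The main obstacle is entirely the bookkeeping in the last paragraph: one must keep straight the left-quasigroup product $\cdot$, its division $\backslash$, the $\lambda$-shifted group product $\underset{\lambda}{\cdot}$, the operators $\rho^\lambda_b$, the module action $m_X$, and the multiplication of $G$ pulled back along $\pi$, and in particular track how the base point and the $X$-parameter transform when one passes from $\lambda$ to $\lambda a$ or to $\lambda b$. Once the right substitutions are chosen, what remains is a finite, mechanical sequence of cancellations in $G$, with no conceptual content beyond Remark \ref{rem:dotlambda} and the definitions \eqref{eq:dotlambda}, \eqref{eq:rholambda}.
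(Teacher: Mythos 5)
Your argument is correct and follows essentially the same route as the paper: you reduce everything to computations in $G$ via $\pi$, first establish \eqref{eq:beta1}$\Leftrightarrow$\eqref{eq:Pi1} by direct substitution, and then, assuming \eqref{eq:Pi1}, pass between \eqref{eq:beta2} and \eqref{eq:Pi2} through an intermediate identity that is precisely the paper's condition $(2)$ (equivalently \eqref{eq:lem7141}/\eqref{eq:lem7161}), with part $(3)$ coming from the mutual inverseness of the substitutions \eqref{eq:defPilambda} and \eqref{eq:prop71111}. The only brisk point is the claim that the passage from the displayed intermediate identity to \eqref{eq:Pi2} ``reverses'': the converse is not a literal undoing of your steps (which used two instances of the intermediate identity) but instead uses \eqref{eq:Pi1} to split $\Pi^\lambda_x(\lambda\backslash((\lambda a)b))=\Pi^\lambda_x(\rho^\lambda_a(b)\underset{\lambda}{\cdot}a)$ and cancel the common right factor, exactly as the paper does — so the gap is presentational, not mathematical.
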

Obviously,
\eqref{eq:defPilambda}
induces \eqref{eq:prop71111}
and vice versa.
\begin{remark}
On account of Remark
\ref{rem:dotlambda},
\eqref{eq:Pi1} means
that
every map $\Pi^\lambda_x: L\to L$
is a homomorphism of the group
$(L, \underset{\lambda}{\cdot})$.
\end{remark}
\begin{proof}[Proof of Proposition $\ref{prop:equivbetaPi}$]
We prove $(1)$ and $(2)$ only.
The proof will be divided into a sequence of lemmas and a corollary.
\begin{lemma}\label{lemma:betaPi}
If elements $\beta^\lambda_x(a)\in L$
$(\lambda\in H, a, b\in L, x\in X)$
satisfy $\eqref{eq:beta1}$,
then the elements $\Pi^\lambda_x(a)$
defined by $\eqref{eq:defPilambda}$
enjoy
$\eqref{eq:Pi1}$.
Conversely,
if elements $\Pi^\lambda_x(a)\in L$
$(\lambda\in H, a, b\in L, x\in X)$
satisfy $\eqref{eq:Pi1}$,
then the elements $\beta^\lambda_x(a)$
defined by $\eqref{eq:prop71111}$
enjoy 
$\eqref{eq:beta1}$.
\end{lemma}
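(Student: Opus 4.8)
The plan is to transport everything to the group $G$ via the bijection $\pi$ and to carry out a direct computation with group words there. Since $L$ is a left quasigroup with a unit, for each $\lambda\in H$ the map $L\ni c\mapsto\lambda c\in L$ is injective, and $\pi$ is a bijection; hence two elements $u,v\in L$ coincide if and only if $\pi(\lambda u)=\pi(\lambda v)$ in $G$. So in each direction it suffices to verify the claimed identity after applying $\lambda\cdot(-)$ and then $\pi$. Throughout I would fix $\lambda\in H$, $a,b\in L$, $x\in X$ and abbreviate $A=\pi(\lambda a)$, $B=\pi(\lambda b)$, $E=\pi(\lambda)$, together with $\widehat A=\pi(\lambda\beta^\lambda_x(a))$, $\widehat B=\pi(\lambda\beta^\lambda_x(b))$ for the first half and $P_a=\pi(\lambda\Pi^\lambda_x(a))$, $P_b=\pi(\lambda\Pi^\lambda_x(b))$ for the second. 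Unwinding \eqref{eq:mu1G} and \eqref{eq:dotlambda} gives $\pi(\lambda(u\underset{\lambda}{\cdot}v))=\pi(\lambda u)E^{-1}\pi(\lambda v)$, in particular $\pi(\lambda(a\underset{\lambda}{\cdot}b))=AE^{-1}B$; the group form of \eqref{eq:beta1} is $\pi(\lambda\beta^\lambda_x(a\underset{\lambda}{\cdot}b))=AE^{-1}\widehat BA^{-1}\widehat A$; that of \eqref{eq:Pi1} is $\pi(\lambda\Pi^\lambda_x(a\underset{\lambda}{\cdot}b))=P_aE^{-1}P_b$; and \eqref{eq:defPilambda}, \eqref{eq:prop71111} read $P_a=E\widehat A^{-1}A$ and $\widehat A=AP_a^{-1}E$ respectively, which are mutually inverse relations.

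For the first assertion, assume \eqref{eq:beta1}. Substituting $c=a\underset{\lambda}{\cdot}b$ into the group form of \eqref{eq:defPilambda} and inserting the formulas just recorded, I would compute
\[
\pi(\lambda\Pi^\lambda_x(a\underset{\lambda}{\cdot}b))
=E\,(AE^{-1}\widehat BA^{-1}\widehat A)^{-1}\,(AE^{-1}B)
=E\widehat A^{-1}A\widehat B^{-1}B ,
\]
while on the other hand $\pi(\lambda(\Pi^\lambda_x(a)\underset{\lambda}{\cdot}\Pi^\lambda_x(b)))=P_aE^{-1}P_b=(E\widehat A^{-1}A)E^{-1}(E\widehat B^{-1}B)=E\widehat A^{-1}A\widehat B^{-1}B$, the same word; injectivity of $c\mapsto\lambda c$ and of $\pi$ then gives \eqref{eq:Pi1}. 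For the converse, assume \eqref{eq:Pi1} and set $\widehat A=AP_a^{-1}E$ as in \eqref{eq:prop71111}. Running the same bookkeeping, I would evaluate
\[
\pi(\lambda\beta^\lambda_x(a\underset{\lambda}{\cdot}b))
=(AE^{-1}B)\,(P_aE^{-1}P_b)^{-1}\,E
=AE^{-1}BP_b^{-1}EP_a^{-1}E ,
\]
and independently $AE^{-1}\widehat BA^{-1}\widehat A=AE^{-1}(BP_b^{-1}E)A^{-1}(AP_a^{-1}E)=AE^{-1}BP_b^{-1}EP_a^{-1}E$; comparing the two words yields the group form of \eqref{eq:beta1}, hence \eqref{eq:beta1}.

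There is no conceptual obstacle: once one passes to $G$, the whole argument is a short elementary manipulation, and only \eqref{eq:beta1} (resp.\ \eqref{eq:Pi1}) is used in each direction, as the lemma asserts. The only thing requiring care is the bookkeeping of the non-commutative group words — tracking the order of factors and of inverses when one substitutes \eqref{eq:beta1} (or \eqref{eq:Pi1}) into the defining formula \eqref{eq:defPilambda} (or \eqref{eq:prop71111}) — and justifying each cancellation back in $L$ by the left-quasigroup axiom together with the bijectivity of $\pi$.
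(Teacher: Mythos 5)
Your proof is correct and follows essentially the same route as the paper's: both transport the identities to $G$ via $\pi$, substitute the defining formulas \eqref{eq:defPilambda} and \eqref{eq:prop71111}, and check that the resulting group words ($E\widehat A^{-1}A\widehat B^{-1}B$ in one direction, $AE^{-1}BP_b^{-1}EP_a^{-1}E$ in the other) agree on both sides. The bookkeeping in your displayed computations checks out.
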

\begin{proof}
Let $\beta^\lambda_x(a)$
$(\lambda\in H, a, b\in L, x\in X)$
be elements of $L$ satisfying
$\eqref{eq:beta1}$.
By \eqref{eq:dotlambda}, \eqref{eq:beta1},
\eqref{eq:defPilambda},
and \eqref{eq:prop71111},
the left-hand-side of 
\eqref{eq:Pi1}
is
\[
\lambda\backslash\pi^{-1}(\pi(\lambda)\pi(\lambda\beta^\lambda_x(a))^{-1}
\pi(\lambda a)\pi(\lambda\beta^\lambda_x(b))^{-1}\pi(\lambda b)),
\]
which is exactly
the right-hand-side of
\eqref{eq:Pi1}
because of \eqref{eq:dotlambda}
and
\eqref{eq:prop71111}.

Conversely,
let
$\Pi^\lambda_x(a)$
$(\lambda\in H, a, b\in L, x\in X)$
be elements of $L$
satisfying $\eqref{eq:Pi1}$.
From \eqref{eq:dotlambda},
\eqref{eq:defPilambda},
\eqref{eq:Pi1},
and \eqref{eq:prop71111},
the left-hand-side of 
\eqref{eq:beta1}
is
\[
\lambda\backslash\pi^{-1}(\pi(\lambda a)\pi(\lambda)^{-1}
\pi(\lambda b)
\pi(\lambda\Pi^\lambda_x(b))^{-1}
\pi(\lambda)\pi(\lambda\Pi^\lambda_x(a))^{-1}\pi(\lambda)),
\]
which coincides with
the right-hand-side of
\eqref{eq:beta1}
due to \eqref{eq:prop71111}.
\end{proof}
By taking account of this lemma,
we are left with the task of clarifying a relation between
\eqref{eq:beta2}
and
\eqref{eq:Pi2}.
\begin{lemma}\label{lemma:PieL}
If elements $\Pi^\lambda_x(a)\in L$
$(\lambda\in H, a, b\in L, x\in X)$
satisfy $\eqref{eq:Pi1}$,
then
$\Pi^\lambda_x(e_L)=e_L$
and
\begin{equation}\label{eq:prop71112}
\pi(\lambda)\pi(\lambda\Pi^\lambda_x(a))^{-1}\pi(\lambda)
=
\pi(\lambda\Pi^\lambda_x(\lambda\backslash
\pi^{-1}(\pi(\lambda)\pi(\lambda a)^{-1}\pi(\lambda))))
\end{equation}
for all $\lambda\in H, a\in L$,
and $x\in X$.
\end{lemma}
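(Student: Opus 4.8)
The plan is to use the group structure supplied by Remark~\ref{rem:dotlambda}: for each fixed $\lambda\in H$, the pair $(L,\underset{\lambda}{\cdot})$ is a group with unit $e_L$ (the operation $\underset{\lambda}{\cdot}$ being that of \eqref{eq:dotlambda}), and the $\underset{\lambda}{\cdot}$-inverse of an element $a\in L$ is $\lambda\backslash\pi^{-1}(\pi(\lambda)\pi(\lambda a)^{-1}\pi(\lambda))$, i.e.\ $\lambda\backslash\pi^{-1}(\mu_1^G(\pi(\lambda),\pi(\lambda a),\pi(\lambda)))$. Relation \eqref{eq:Pi1} says precisely that, for each fixed $x\in X$, the map $\Pi^\lambda_x\colon L\to L$ is an endomorphism of this group; hence it sends the unit to the unit and each element to its $\underset{\lambda}{\cdot}$-inverse. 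Both assertions of the lemma will follow from these observations alone.

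For $\Pi^\lambda_x(e_L)=e_L$, I would put $a=b=e_L$ in \eqref{eq:Pi1} and use $e_L\underset{\lambda}{\cdot}e_L=e_L$ to get $\Pi^\lambda_x(e_L)=\Pi^\lambda_x(e_L)\underset{\lambda}{\cdot}\Pi^\lambda_x(e_L)$; multiplying on the left by the $\underset{\lambda}{\cdot}$-inverse of $\Pi^\lambda_x(e_L)$ yields $e_L=\Pi^\lambda_x(e_L)$.

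For \eqref{eq:prop71112}, write $\bar a:=\lambda\backslash\pi^{-1}(\pi(\lambda)\pi(\lambda a)^{-1}\pi(\lambda))$ for the $\underset{\lambda}{\cdot}$-inverse of $a$. Since $\Pi^\lambda_x$ is a group endomorphism of $(L,\underset{\lambda}{\cdot})$, the element $\Pi^\lambda_x(\bar a)$ is the $\underset{\lambda}{\cdot}$-inverse of $\Pi^\lambda_x(a)$, which by Remark~\ref{rem:dotlambda} equals $\lambda\backslash\pi^{-1}(\pi(\lambda)\pi(\lambda\Pi^\lambda_x(a))^{-1}\pi(\lambda))$. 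Thus $\Pi^\lambda_x(\bar a)=\lambda\backslash\pi^{-1}(\pi(\lambda)\pi(\lambda\Pi^\lambda_x(a))^{-1}\pi(\lambda))$; left-multiplying by $\lambda$ (that is, using $\lambda(\lambda\backslash c)=c$) and then applying $\pi$ turns this identity into exactly \eqref{eq:prop71112}.

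The argument involves no real difficulty; the only point demanding attention is keeping the bookkeeping of $\pi$, $\pi^{-1}$, the left division $\lambda\backslash(-)$, and the convention $\mu_1^G(a,b,c)=ab^{-1}c$ consistent when invoking the explicit inverse formula of Remark~\ref{rem:dotlambda}. I do not expect any genuine obstacle here.
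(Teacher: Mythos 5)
Your proposal is correct and follows essentially the same route as the paper: the paper also obtains $\Pi^\lambda_x(e_L)=e_L$ by setting $a=b=e_L$ in \eqref{eq:Pi1}, and obtains \eqref{eq:prop71112} by substituting the $\underset{\lambda}{\cdot}$-inverse $\lambda\backslash\pi^{-1}(\pi(\lambda)\pi(\lambda a)^{-1}\pi(\lambda))$ for $b$ in \eqref{eq:Pi1}, which is exactly your ``a group endomorphism preserves inverses'' step made explicit. The bookkeeping with $\pi$, $\lambda\backslash(-)$, and the inverse formula of Remark \ref{rem:dotlambda} works out as you anticipate.
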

\begin{proof}
From \eqref{eq:dotlambda},
$e_L\underset{\lambda}{\cdot}e_L=e_L$
for every $\lambda\in H(=L)$,
and
substituting
$e_L$ for
$a$ and $b$ in \eqref{eq:Pi1},
together with
\eqref{eq:dotlambda},
yields
$\Pi^\lambda_x(e_L)=e_L$
for all $\lambda\in H$
and $x\in X$
as a result.

Moreover,
by substituting
$\lambda\backslash\pi^{-1}(\pi(\lambda)\pi(\lambda a)^{-1}\pi(\lambda))$
for $b$
in
\eqref{eq:Pi1},
we can show:
the left-hand-side is
$e_L$ because
of \eqref{eq:dotlambda}
and
the fact that
$\Pi^\lambda_x(e_L)=e_L$;
and 
the right-hand-side 
is
\[
\lambda\backslash\pi^{-1}(
\pi(\lambda\Pi^\lambda_x(a))\pi(\lambda)^{-1}
\pi(\lambda\Pi^\lambda_x(\lambda\backslash
\pi^{-1}(\pi(\lambda)\pi(\lambda a)^{-1}\pi(\lambda)))))
\]
due to \eqref{eq:dotlambda}.
This gives \eqref{eq:prop71112} immediately.
\end{proof}
\begin{lemma}
Let $\beta^\lambda_x(a)$
$(\lambda\in H, a, b\in L, x\in X)$ be elements of $L$
satisfying $\eqref{eq:beta1}$,
and we define
$\Pi^\lambda_x(a)$
$(\lambda\in H, a\in L, x\in X)$
by $\eqref{eq:defPilambda}$.
Then
the following three conditions are equivalent\/$:$
\begin{enumerate}
\item[$(1)$]
$\eqref{eq:beta2}$
for
all
$\lambda\in L, a, b\in L$,
and $x\in X$\/$;$
\item[$(2)$]
$\Pi^\lambda_{m_X(\lambda)(a, x)}(b)
=
\lambda\backslash\pi^{-1}(\pi(\lambda)
\pi(\lambda a)^{-1}
\pi((\lambda a)\Pi^{\lambda a}_x(\rho^{\lambda a}_{(\lambda a)\backslash\lambda}
(b))))$
for
all
$\lambda\in L, a, b\in L$,
and $x\in X$\/$;$
\item[$(3)$]
$\eqref{eq:Pi2}$
for
all
$\lambda\in L, a, b\in L$,
and $x\in X$.
\end{enumerate}
\end{lemma}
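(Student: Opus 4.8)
The plan is to establish the two equivalences $(1)\Leftrightarrow(2)$ and $(2)\Leftrightarrow(3)$, translating everything into the group $G$ via $\pi\colon L\to G$ and using only reversible substitutions. Throughout I would invoke Lemmas~\ref{lemma:betaPi} and~\ref{lemma:PieL}: since $\beta^\lambda_x$ satisfies \eqref{eq:beta1}, the elements $\Pi^\lambda_x$ given by \eqref{eq:defPilambda} satisfy \eqref{eq:Pi1}, whence $\Pi^\lambda_x(e_L)=e_L$; moreover \eqref{eq:defPilambda} and \eqref{eq:prop71111} allow passing freely between $\beta^\lambda_x$ and $\Pi^\lambda_x$. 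Two auxiliary bijectivity facts will be used repeatedly. Since $(X,m_X)$ is a left $(L,m,\eta)$-module and $L$ is a left quasigroup, the maps $m_X(\mu)(c,-)$ and $m_X(\mu c)((\mu c)\backslash\mu,-)$ on $X$ are mutually inverse bijections (this is the computation \eqref{eq:prop716}, used once as stated and once with $\mu$ replaced by $\mu c$, together with \eqref{eq:leftmod1}, \eqref{eq:leftmod2}, \eqref{eq:m}). Likewise $\rho^{\lambda a}_{(\lambda a)\backslash\lambda}\colon L\to L$ is a bijection with inverse $\rho^\lambda_a$, and $\lambda\backslash((\lambda a)b)=\rho^\lambda_a(b)\underset{\lambda}{\cdot}a$ for all $\lambda,a,b$; both follow by direct computations in $G$ from \eqref{eq:rholambda} and \eqref{eq:dotlambda}.

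For $(1)\Leftrightarrow(2)$, I would substitute \eqref{eq:prop71111} for $\beta^\lambda_x(a)$ and for $\beta^{\lambda b}_{m_X(\lambda b)((\lambda b)\backslash\lambda,x)}\bigl(\rho^{\lambda b}_{(\lambda b)\backslash\lambda}(a)\bigr)$ into \eqref{eq:beta2}, and compute $\pi\bigl((\lambda b)\rho^{\lambda b}_{(\lambda b)\backslash\lambda}(a)\bigr)=\pi(\lambda a)\pi(\lambda)^{-1}\pi(\lambda b)$ from \eqref{eq:rholambda}. Cancelling the common left factor $\pi(\lambda a)\pi(\lambda)^{-1}\pi(\lambda b)$ and rearranging, \eqref{eq:beta2} becomes equivalent to
\[
\pi(\lambda\Pi^\lambda_x(a))=\pi(\lambda)\pi(\lambda b)^{-1}\pi\bigl((\lambda b)\,\Pi^{\lambda b}_{m_X(\lambda b)((\lambda b)\backslash\lambda,x)}\bigl(\rho^{\lambda b}_{(\lambda b)\backslash\lambda}(a)\bigr)\bigr)\qquad(\lambda,a,b\in L,\ x\in X).
\]
Then the substitution interchanging $a$ with the ``base'' $b$ and rescaling $x$ (namely $b\mapsto a'$, $a\mapsto b'$, $x\mapsto m_X(\lambda)(a',x')$, a bijection of parameter tuples because $m_X(\lambda)(a',-)$ is bijective and $\rho^{\lambda a'}_{(\lambda a')\backslash\lambda}$ is injective) carries this identity to $(2)$, and conversely.

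For $(2)\Leftrightarrow(3)$, from $(2)$ I would substitute $b\mapsto\rho^\lambda_a(b)$ (using $\rho^{\lambda a}_{(\lambda a)\backslash\lambda}\circ\rho^\lambda_a=\mathrm{id}$) and $x\mapsto m_X(\lambda a)((\lambda a)\backslash\lambda,x)$ (using $m_X(\lambda)\bigl(a,m_X(\lambda a)((\lambda a)\backslash\lambda,x)\bigr)=x$), obtaining the equivalent statement
\[
\pi\bigl(\lambda\Pi^\lambda_x(\rho^\lambda_a(b))\bigr)=\pi(\lambda)\pi(\lambda a)^{-1}\pi\bigl((\lambda a)\,\Pi^{\lambda a}_{m_X(\lambda a)((\lambda a)\backslash\lambda,x)}(b)\bigr).
\]
Substituting this into the right-hand side of \eqref{eq:Pi2} collapses that side to $\pi\bigl(\lambda\Pi^\lambda_x(\rho^\lambda_a(b))\bigr)\pi(\lambda)^{-1}\pi(\lambda\Pi^\lambda_x(a))$; on the other hand the left-hand side of \eqref{eq:Pi2}, using $\lambda\backslash((\lambda a)b)=\rho^\lambda_a(b)\underset{\lambda}{\cdot}a$ and then \eqref{eq:Pi1} together with \eqref{eq:dotlambda}, equals the very same expression. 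Hence \eqref{eq:Pi2} holds, and every step reverses, giving $(2)\Leftrightarrow(3)$.

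I expect the genuine difficulty to lie not in any single manipulation but in the bookkeeping: proving the two bijectivity facts from the module axioms and the left-quasigroup structure, and choosing substitutions so that the conjugation-type right-hand sides of \eqref{eq:beta2}, $(2)$ and \eqref{eq:Pi2} line up after cancellation. The pivotal observation is the factorization $m(\lambda)(a,b)=\lambda\backslash((\lambda a)b)=\rho^\lambda_a(b)\underset{\lambda}{\cdot}a$: this is what lets the group-homomorphism property \eqref{eq:Pi1} of $\Pi^\lambda_x$ with respect to $\underset{\lambda}{\cdot}$ split the left-hand side of \eqref{eq:Pi2}, without which that side cannot be brought into the required form.
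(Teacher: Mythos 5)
Your proof is correct and follows essentially the same route as the paper's: both reduce conditions $(1)$, $(2)$, and $(3)$ to the single identity \eqref{eq:lem7141} via reversible parameter substitutions, and both use the factorization $\lambda\backslash((\lambda a)b)=\rho^\lambda_a(b)\underset{\lambda}{\cdot}a$ together with \eqref{eq:Pi1} to split the left-hand side of \eqref{eq:Pi2}. The only (harmless) difference is that you pass from \eqref{eq:beta2} to that identity by substituting \eqref{eq:prop71111} directly and cancelling a common factor, whereas the paper detours through the $\underset{\lambda}{\cdot}$-inverse and the auxiliary identity \eqref{eq:prop71112} of Lemma \ref{lemma:PieL}.
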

\begin{proof}
As we have already pointed out,
\eqref{eq:defPilambda}
induces \eqref{eq:prop71111}.
From
\eqref{eq:rholambda},
\eqref{eq:prop71111},
and \eqref{eq:prop71112},
the left-hand-side of \eqref{eq:beta2} is
\[
\pi^{-1}(
\pi(\lambda a)\pi(\lambda)^{-1}
\pi((\lambda b)
\Pi^{\lambda b}_{m_X(\lambda b)((\lambda b)\backslash\lambda, x)}
((\lambda b)\backslash\pi^{-1}(\pi(\lambda)
\pi(\lambda a)^{-1}
\pi(\lambda b))))).
\]
From Lemmas \ref{lemma:betaPi}
and \ref{lemma:PieL},
\eqref{eq:prop71112} holds,
and,
by \eqref{eq:prop71111}
and 
\eqref{eq:prop71112},
the right-hand-side of \eqref{eq:beta2} is
\[
\pi^{-1}(
\pi(\lambda a)\pi(\lambda)^{-1}
\pi(\lambda b)\pi(\lambda)^{-1}
\pi(\lambda
\Pi^\lambda_x(\lambda\backslash\pi^{-1}(\pi(\lambda)\pi(\lambda a)^{-1}
\pi(\lambda))))).
\]
The condition $(1)$
is consequently equivalent to:
\begin{align}
&\pi((\lambda b)
\Pi^{\lambda b}_{m_X(\lambda b)((\lambda b)\backslash\lambda, x)}
((\lambda b)\backslash\pi^{-1}(\pi(\lambda)
\pi(\lambda a)^{-1}
\pi(\lambda b))))
\label{eq:prop71114}
\\
=&
\pi(\lambda b)\pi(\lambda)^{-1}
\pi(\lambda
\Pi^\lambda_x(\lambda\backslash\pi^{-1}(\pi(\lambda)\pi(\lambda a)^{-1}
\pi(\lambda))))
\notag
\end{align}
for all
$\lambda\in H, a, b\in L$,
and $x\in X$.

Replacing
$a$ in \eqref{eq:prop71114} by 
$\lambda\backslash\pi^{-1}(\pi(\lambda)
\pi(\lambda a)^{-1}\pi(\lambda))$,
we can assert that the above condition is equivalent to:
\begin{align*}
&\Pi^\lambda_x(a) \\
=&
\lambda\backslash\pi^{-1}(\pi(\lambda)
\pi(\lambda b)^{-1}
\pi((\lambda b)\Pi^{\lambda b}_{m_X(\lambda b)((\lambda b)\backslash\lambda, x)}
((\lambda b)\backslash\pi^{-1}(\pi(\lambda a)\pi(\lambda)^{-1}
\pi(\lambda b)))))
\end{align*}
for all
$\lambda\in H, a, b\in L$,
and $x\in X$.
In view of \eqref{eq:m},
\eqref{eq:rholambda},
and the fact that $(X, m_X)$ is a left $(L, m, \eta)$-module,
the above condition is exactly the same as the condition $(2)$,
which coincides with the condition
\begin{equation}
\label{eq:lem7161}
\pi((\lambda a)
\Pi^{\lambda a}_x(\rho^{\lambda a}_{(\lambda a)\backslash\lambda}(b)))
=
\pi(\lambda a)\pi(\lambda)^{-1}
\pi(\lambda\Pi^\lambda_{m_X(\lambda)(a, x)}(b))
\end{equation}
for all
$\lambda\in H, a, b\in L$,
and $x\in X$.

This condition is equivalent to that
\begin{equation}\label{eq:lem7141}
\pi(\lambda\Pi^\lambda_x(\rho^\lambda _a(b)))
=
\pi(\lambda)\pi(\lambda a)^{-1}
\pi((\lambda a)\Pi^{\lambda a}_{m_X(\lambda a)((\lambda a)\backslash\lambda , x)}
(b))
\end{equation}
for all
$\lambda\in H, a, b\in L$,
and $x\in X$.
In fact,
substituting respectively $\lambda a$
and
$(\lambda a)\backslash\lambda$
for
$\lambda$ and $a$ in
\eqref{eq:lem7161}
gives 
\eqref{eq:lem7141}
and vice versa.

From
\eqref{eq:dotlambda}
and
\eqref{eq:Pi1},
\begin{align}
\lambda\backslash\pi^{-1}(
\pi(\lambda\Pi^\lambda_x(\rho^\lambda_a(b)))
\pi(\lambda)^{-1}\pi(\lambda\Pi^\lambda_x(a)))
=&
\Pi^\lambda_x(\rho^\lambda_a(b))\underset{\lambda}{\cdot}
\Pi^\lambda_x(a)
\label{eq:lem7142}
\\
=&\Pi^\lambda_x(\rho^\lambda_a(b)\underset{\lambda}{\cdot}a),
\notag
\end{align}
which is exactly 
the left-hand-side of
\eqref{eq:Pi2}
in view of \eqref{eq:dotlambda} and \eqref{eq:rholambda}.

Therefore,
\eqref{eq:Pi2} holds, if and only if
the right-hand-side of \eqref{eq:Pi2}
is equal to the left-hand-side of \eqref{eq:lem7142}.
Because this equation is equivalent to
\eqref{eq:lem7141},
the conditions $(2)$ and $(3)$ are equivalent.
This is our assertion.
\end{proof}
Lemma \ref{lemma:betaPi}
immediately implies the following corollary.
\begin{corollary}
Let $\Pi^\lambda_x(a)$
$(\lambda\in H, a, b\in L, x\in X)$ be elements of $L$
satisfying $\eqref{eq:Pi1}$,
and we define
$\beta^\lambda_x(a)$
$(\lambda\in H, a\in L, x\in X)$
by $\eqref{eq:prop71111}$.
Then
the three conditions in the previous lemma are equivalent.
\end{corollary}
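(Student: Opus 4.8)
The plan is to obtain this corollary as an immediate consequence of Lemma \ref{lemma:betaPi} together with the remark, recorded just after the statement of Proposition \ref{prop:equivbetaPi}, that the formulas \eqref{eq:defPilambda} and \eqref{eq:prop71111} define mutually inverse assignments between the families $(\beta^\lambda_x)$ and $(\Pi^\lambda_x)$.

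First I would apply the second half of Lemma \ref{lemma:betaPi}: since the given elements $\Pi^\lambda_x(a)$ satisfy \eqref{eq:Pi1}, the elements $\beta^\lambda_x(a)$ defined by \eqref{eq:prop71111} satisfy \eqref{eq:beta1}. Next I would note that, because \eqref{eq:prop71111} and \eqref{eq:defPilambda} are mutually inverse, the original family $(\Pi^\lambda_x)$ is precisely the one obtained from this $(\beta^\lambda_x)$ via \eqref{eq:defPilambda}; that is, $\Pi^\lambda_x(a)=\lambda\backslash\pi^{-1}(\pi(\lambda)\pi(\lambda\beta^\lambda_x(a))^{-1}\pi(\lambda a))$ for all $\lambda\in H$, $a\in L$, and $x\in X$.

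These two observations place the pair $(\beta^\lambda_x,\Pi^\lambda_x)$ exactly in the hypotheses of the preceding (unnamed) lemma — a family $\beta^\lambda_x(a)$ satisfying \eqref{eq:beta1}, with $\Pi^\lambda_x(a)$ given by \eqref{eq:defPilambda} — so that lemma applies verbatim and yields the pairwise equivalence of \eqref{eq:beta2}, the intermediate condition $\Pi^\lambda_{m_X(\lambda)(a,x)}(b)=\lambda\backslash\pi^{-1}(\pi(\lambda)\pi(\lambda a)^{-1}\pi((\lambda a)\Pi^{\lambda a}_x(\rho^{\lambda a}_{(\lambda a)\backslash\lambda}(b))))$, and \eqref{eq:Pi2}, for all $\lambda\in H$, $a,b\in L$, and $x\in X$. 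That is exactly the assertion of the corollary. I do not anticipate any genuine obstacle; the only point requiring a moment's care is checking that the $\Pi^\lambda_x$ one starts from coincides with the one reconstructed from $\beta^\lambda_x$ through \eqref{eq:defPilambda}, which is precisely the mutual-inverse remark, so the chain of lemmas announced at the opening of the proof of Proposition \ref{prop:equivbetaPi} closes cleanly.
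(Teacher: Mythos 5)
Your argument is correct and is exactly the route the paper takes: it cites Lemma \ref{lemma:betaPi} (to get \eqref{eq:beta1} for the constructed $\beta^\lambda_x$) together with the observation that \eqref{eq:defPilambda} and \eqref{eq:prop71111} are mutually inverse, so the hypotheses of the preceding lemma are met and its conclusion transfers verbatim. You have merely spelled out the details that the paper compresses into the single sentence ``Lemma \ref{lemma:betaPi} immediately implies the following corollary.''
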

The above sequence of lemmas and a corollary completes the proof
of Proposition \ref{prop:equivbetaPi}
$(1)$ and $(2)$.
\end{proof}
Let $\lambda_0(\in L)$ denote the unique element satisfying
$\pi(\lambda_0)=e_G$.
Here, $e_G$ is the unit element of the group $G$.

The following proposition is our final step in the proof of Theorem
\ref{theorem:grouphomo1}.
\begin{proposition}\label{prop:equivPif}
\noindent{$(1)$}
We assume that elements $\Pi^\lambda_x(a)\in L$ 
$(\lambda\in H, a, \in L, x\in X)$
satisfy
$\eqref{eq:Pi1}$
and
$\eqref{eq:Pi2}$
for all $\lambda\in H, a, b\in L, x\in X$.
Then the maps $f^{\lambda_0}_x: G\to G$
defined by
\begin{equation}
f^{\lambda_0}_x(a)=\pi(\lambda_0\Pi^{\lambda_0}_x(\lambda_0\backslash
\pi^{-1}(a)))
\label{eq:fusingPi}
\end{equation}
are group homomorphisms
and enjoy
\begin{equation}\label{eq:Pi}
\Pi^\lambda_x(a)=\lambda\backslash\pi^{-1}(\pi(\lambda)
f^{\lambda_0}_{m_X(\lambda_0)(\lambda_0\backslash\lambda, x)}
(\pi(\lambda a))
f^{\lambda_0}_{m_X(\lambda_0)(\lambda_0\backslash\lambda, x)}
(\pi(\lambda))^{-1})
\end{equation}
for all $\lambda\in H, a\in L, x\in X$.

\noindent{$(2)$}
Let 
$\{f^{\lambda_0}_x: G\to G\mid x\in X\}$ be a family of group homomorphisms.
If we define elements $\Pi^\lambda_x(a)\in L$
$(\lambda\in H, a\in L, x\in X)$
by $\eqref{eq:Pi}$,
then they satisfy
$\eqref{eq:Pi1}$
and
$\eqref{eq:Pi2}$
for all $\lambda\in H, a, b\in L, x\in X$.

\noindent{$(3)$}
The correspondence in $(1)$
is the inverse of that in $(2)$,
and vice versa.
\end{proposition}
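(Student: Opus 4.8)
The plan is to push everything into the single group $G$. For $\lambda\in H$ let $\varphi_\lambda\colon L\to G$, $\varphi_\lambda(a)=\pi(\lambda a)\pi(\lambda)^{-1}$. This is a bijection with $\varphi_\lambda(e_L)=e_G$, and a one-line computation from \eqref{eq:dotlambda} gives $\varphi_\lambda(a\underset{\lambda}{\cdot}b)=\varphi_\lambda(a)\varphi_\lambda(b)$; hence $\varphi_\lambda\colon(L,\underset{\lambda}{\cdot})\to G$ is a group isomorphism (cf.\ Remark \ref{rem:dotlambda}), and since $\pi(\lambda_0)=e_G$ one has $\varphi_{\lambda_0}(a)=\pi(\lambda_0 a)$ and $\varphi_{\lambda_0}^{-1}(a)=\lambda_0\backslash\pi^{-1}(a)$. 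Set $\widetilde{\Pi}^\lambda_x=\varphi_\lambda\circ\Pi^\lambda_x\circ\varphi_\lambda^{-1}\colon G\to G$, so that $f^{\lambda_0}_x=\widetilde{\Pi}^{\lambda_0}_x$ by \eqref{eq:fusingPi}. Using $\varphi_\lambda$, I would first check that \eqref{eq:Pi} is equivalent to
\[
\widetilde{\Pi}^\lambda_x(g)=\pi(\lambda)\,f^{\lambda_0}_{m_X(\lambda_0)(\lambda_0\backslash\lambda,x)}(g)\,\pi(\lambda)^{-1}\qquad(g\in G);
\]
and, provided \eqref{eq:Pi1} holds so that each $\widetilde{\Pi}^\lambda_x$ is a group homomorphism of $G$, that \eqref{eq:Pi2} — upon applying $\varphi_\lambda$, writing $\mu=\lambda a$, putting $g=\pi(\mu)\pi(\lambda)^{-1}$ and $h=\pi(\mu b)\pi(\mu)^{-1}$ (which range freely over $G$ as $a$, resp.\ $b$, range over $L$), and using the homomorphism property of $\widetilde{\Pi}^\lambda_x$ to cancel a common factor — is equivalent to
\[
\widetilde{\Pi}^\mu_{m_X(\mu)(\mu\backslash\lambda,x)}(g)=\bigl(\pi(\mu)\pi(\lambda)^{-1}\bigr)\,\widetilde{\Pi}^\lambda_x(g)\,\bigl(\pi(\mu)\pi(\lambda)^{-1}\bigr)^{-1}\qquad(\lambda,\mu\in H,\ g\in G).
\]

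For $(1)$: that $f^{\lambda_0}_x$ is a group homomorphism of $G$ is immediate, since $\Pi^{\lambda_0}_x$ is a homomorphism of $(L,\underset{\lambda_0}{\cdot})$ by \eqref{eq:Pi1} and $\varphi_{\lambda_0}$ is a group isomorphism onto $G$. Specializing the second displayed equivalence at $\lambda=\lambda_0$ (so $\pi(\lambda_0)=e_G$ and $\widetilde{\Pi}^{\lambda_0}_x=f^{\lambda_0}_x$) gives $\widetilde{\Pi}^\mu_{m_X(\mu)(\mu\backslash\lambda_0,x)}(g)=\pi(\mu)\,f^{\lambda_0}_x(g)\,\pi(\mu)^{-1}$. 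The map $x\mapsto m_X(\mu)(\mu\backslash\lambda_0,x)$ is a bijection of $X$ with inverse $z\mapsto m_X(\lambda_0)(\lambda_0\backslash\mu,z)$, which follows from the module axioms \eqref{eq:leftmod1}, \eqref{eq:leftmod2} for $(X,m_X)$ together with $\mu(\mu\backslash\lambda_0)=\lambda_0$, $\lambda_0(\lambda_0\backslash\mu)=\mu$ and $\mu\backslash\mu=\lambda_0\backslash\lambda_0=e_L$; replacing the subscript accordingly yields $\widetilde{\Pi}^\mu_z(g)=\pi(\mu)\,f^{\lambda_0}_{m_X(\lambda_0)(\lambda_0\backslash\mu,z)}(g)\,\pi(\mu)^{-1}$ for all $\mu,z,g$, which is the first displayed equivalence, so undoing $\varphi_\mu$ gives \eqref{eq:Pi}.

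For $(2)$: given group homomorphisms $f^{\lambda_0}_x$ of $G$, define $\widetilde{\Pi}^\lambda_x$ by the first display and $\Pi^\lambda_x=\varphi_\lambda^{-1}\circ\widetilde{\Pi}^\lambda_x\circ\varphi_\lambda$ (this is \eqref{eq:Pi}). Each $\widetilde{\Pi}^\lambda_x$ is a homomorphism of $G$, hence $\Pi^\lambda_x$ is a homomorphism of $(L,\underset{\lambda}{\cdot})$, i.e.\ \eqref{eq:Pi1}. Then \eqref{eq:Pi2} holds iff the second display holds; its left-hand side equals $\pi(\mu)\,f^{\lambda_0}_{m_X(\lambda_0)(\lambda_0\backslash\lambda,x)}(g)\,\pi(\mu)^{-1}$ by the identity $m_X(\lambda_0)(\lambda_0\backslash\mu,m_X(\mu)(\mu\backslash\lambda,x))=m_X(\lambda_0)(\lambda_0\backslash\lambda,x)$ (again from \eqref{eq:leftmod1}, \eqref{eq:leftmod2} and $\mu(\mu\backslash\lambda)=\lambda$), and its right-hand side equals the same thing because conjugating by $\pi(\lambda)$ and then by $\pi(\mu)\pi(\lambda)^{-1}$ is conjugation by $\pi(\mu)$. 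Part $(3)$ is then immediate: by $(1)$ the maps produced there already satisfy \eqref{eq:Pi}, the defining formula used in $(2)$; conversely substituting \eqref{eq:Pi} at $\lambda=\lambda_0$ into \eqref{eq:fusingPi} returns $f^{\lambda_0}_x$, using $\pi(\lambda_0)=e_G$, $\lambda_0\backslash\lambda_0=e_L$, and $m_X(\lambda_0)(e_L,x)=x$ from \eqref{eq:leftmod2}.

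The step I expect to be the main obstacle is the bookkeeping of the dynamical parameter: one must verify, using only \eqref{eq:leftmod1}, \eqref{eq:leftmod2} and the left-quasigroup identities, that the subscript $m_X(\mu)(\mu\backslash\lambda_0,x)$ coming out of \eqref{eq:Pi2} at $\lambda_0$ — and in $(2)$ the nested subscript $m_X(\lambda_0)(\lambda_0\backslash\mu,m_X(\mu)(\mu\backslash\lambda,x))$ — collapses to exactly $m_X(\lambda_0)(\lambda_0\backslash\lambda,x)$, and that the attendant reparametrization of $X$ is a bijection. Once that is in place, the remaining $G$-theoretic content, phrased through the conjugating isomorphisms $\varphi_\lambda$, is routine manipulation of inner automorphisms.
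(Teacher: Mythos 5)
Your argument is correct, and at bottom it performs the same substitutions as the paper's proof: the homomorphism property of $f^{\lambda_0}_x$ is \eqref{eq:Pi1} at $\lambda=\lambda_0$ read through the bijection $a\mapsto\lambda_0\backslash\pi^{-1}(a)$; \eqref{eq:Pi} is obtained by specializing \eqref{eq:Pi2} at $\lambda=\lambda_0$ and reparametrizing $X$; and both directions rest on the collapse $m_X(\lambda_0)(\lambda_0\backslash\mu,\,m_X(\mu)(\mu\backslash\lambda,x))=m_X(\lambda_0)(\lambda_0\backslash\lambda,x)$, which does follow from \eqref{eq:leftmod1}, \eqref{eq:leftmod2}, and \eqref{eq:m} exactly as you say. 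What you do differently is to transport everything to $G$ through the isomorphisms $\varphi_\lambda\colon(L,\underset{\lambda}{\cdot})\to G$, $\varphi_\lambda(a)=\pi(\lambda a)\pi(\lambda)^{-1}$, so that \eqref{eq:Pi1} says each $\widetilde{\Pi}^\lambda_x$ is an endomorphism of $G$, \eqref{eq:Pi} says $\widetilde{\Pi}^\lambda_x$ is $f^{\lambda_0}_{m_X(\lambda_0)(\lambda_0\backslash\lambda,x)}$ conjugated by $\pi(\lambda)$, and \eqref{eq:Pi2} becomes the compatibility $\widetilde{\Pi}^\mu_{m_X(\mu)(\mu\backslash\lambda,x)}(h)=\bigl(\pi(\mu)\pi(\lambda)^{-1}\bigr)\widetilde{\Pi}^\lambda_x(h)\bigl(\pi(\mu)\pi(\lambda)^{-1}\bigr)^{-1}$; the paper instead manipulates the raw $\pi$-formulas directly (its \eqref{eq:pf_Piandf2} is essentially your second display before the homomorphism property is used to combine terms). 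Your packaging makes the inner-automorphism structure, the role of $\lambda_0$ as a base point, and the reversibility of every step visible at a glance, at the cost of first verifying the two translation lemmas; the paper's version is longer but needs no auxiliary dictionary. One small imprecision to fix: the equivalence of \eqref{eq:Pi} with your first display already uses that $f^{\lambda_0}_{m_X(\lambda_0)(\lambda_0\backslash\lambda,x)}$ is a group homomorphism (to rewrite $f(\pi(\lambda a))f(\pi(\lambda))^{-1}$ as $f(\pi(\lambda a)\pi(\lambda)^{-1})$), not only the second equivalence; this is harmless, since the homomorphism property is established in $(1)$ before the equivalence is invoked and is a hypothesis in $(2)$, but the proviso should be attached to both displays.
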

\begin{proof}
We only prove $(1)$ and $(2)$.

\noindent{$(1)$}
We begin by showing that  
$f^{\lambda_0}_x(ab) = f^{\lambda_0}_x(a) f^{\lambda_0}_x(b)$ 
for all $a, b \in G$. 
Because of $\eqref{eq:dotlambda}$ and the fact that $\pi(\lambda_0) = e_{G}$,
\begin{align}
f^{\lambda_0}_x(ab) =& \pi(\lambda_0\Pi^{\lambda_0}_x(\lambda_0
\backslash\pi^{-1}(a \pi(\lambda_0)^{-1} b))) 
\label{eq:pf_Piandf1}
 \\
=& \pi(\lambda_0\Pi^{\lambda_0}_x((\lambda_0 \backslash \pi^{-1}(a)) 
\underset{\lambda_0}{\cdot} (\lambda_0 \backslash \pi^{-1}(b)))). 
\notag
\end{align}
From \eqref{eq:Pi1}, the right-hand-side of \eqref{eq:pf_Piandf1} is
\begin{align*}
& \pi(\lambda_0(\Pi^{\lambda_0}_x(\lambda_0 \backslash \pi^{-1}(a)) 
\underset{\lambda_0}{\cdot} \Pi^{\lambda_0}_x(\lambda_0 \backslash \pi^{-1}(b)))) 
\\
=& \pi(\lambda_0 \Pi^{\lambda_0}_x(\lambda_0 \backslash \pi^{-1}(a))) 
\pi(\lambda_0)^{-1} \pi(\lambda_0 
\Pi^{\lambda_0}_x(\lambda_0 \backslash \pi^{-1}(b))),
\end{align*}
which coincides with $f^{\lambda_0}_x(a) f^{\lambda_0}_x(b)$, 
and the map $f^{\lambda_0}_x: G \to G$ is consequently a group homomorphism.

The next task is to show \eqref{eq:Pi}.  
From \eqref{eq:Pi2},
\begin{align}
&\Pi^{\lambda a}_{m_{X}(\lambda a)((\lambda a) \backslash \lambda, x)}(b) 
\label{eq:pf_Piandf2}
\\
=& (\lambda a) \backslash \pi^{-1}(\pi(\lambda a)\pi(\lambda)^{-1}
\pi(\lambda \Pi^{\lambda}_x(\lambda \backslash ((\lambda a) b)))
\pi(\lambda \Pi^{\lambda}_x(a))^{-1}\pi(\lambda)). 
\notag
\end{align}
Since $(X, m_{X})$ is a left $(L, m, \eta)$-module, 
$m_{X}(\lambda)(\lambda \backslash \lambda_0, 
m_{X}(\lambda_0)(\lambda_0 \backslash \lambda, x)) = x$ 
on account of \eqref{eq:m}. 
As a result, we deduce 
\eqref{eq:Pi} by means of \eqref{eq:fusingPi}, 
substituting 
$\lambda_0, 
\lambda_0\backslash\lambda, 
a, m_{X}(\lambda_0)(\lambda_0 \backslash \lambda, x)$
for $\lambda, a, b, x$ in \eqref{eq:pf_Piandf2}. 
This is our claim.

\noindent{$(2)$}
Since $(X, m_{X})$ is a left $(L, m, \eta)$-module, 
$m_{X}(\lambda_0)(\lambda_0 \backslash (\lambda a), 
m_{X}(\lambda a)((\lambda a) $ $\backslash \lambda, x)) 
= m_{X}(\lambda_0)(\lambda_0 \backslash \lambda, x)$ 
in view of \eqref{eq:m}, and we can immediately prove 
\eqref{eq:Pi2}, by using \eqref{eq:Pi}.

On account of \eqref{eq:dotlambda},  \eqref{eq:Pi}, 
and the fact that every $f^{\lambda_0}_x: G\to G$ is a group homomorphism,
we can show \eqref{eq:Pi1}, and $(2)$ is proved.
\end{proof}
Combining Corollary
\ref{cor:equivmYshikaku}
and
Propositions
\ref{prop:equivshikakubeta},
\ref{prop:equivbetaPi},
\ref{prop:equivPif},
we can prove
Theorem \ref{theorem:grouphomo1}.

We are now in a position to describe the dynamical reflection map
$k: L\otimes X\to L\otimes X\in\setH$
in \eqref{eq:k}
by means of the elements $\Pi^\lambda_x(a)$:
for $\lambda\in H, a\in L, x\in X$,
\begin{align}
&k(\lambda)(a, x)
\label{eq:REkusePi}
\\
=&(\Pi^\lambda_{m_X(\lambda)(a, x)}(a),
m_X(\lambda\Pi^\lambda_{m_X(\lambda)(a, x)}(a))
((\lambda\Pi^\lambda_{m_X(\lambda)(a, x)}(a))\backslash(\lambda a), x)).
\notag
\end{align}
\section{Examples}
\label{section:examples}
From Proposition \ref{prop:braidedmonoid},
the left quasigroup with a unit $(L, \cdot, e_L)$
(Definition \ref{def:leftquasig})
and the group $G$ isomorphic to $L$ as sets
give birth to a braided monoid $(L, m, \eta, \sigma)$ in $\setH$
(Here, $H=L$).
With the aid of Propositions
\ref{prop:k1}, \ref{prop:k2}, \ref{prop:sigmabraid},
\ref{prop:sigmaisom},
Remarks \ref{rem:braidcommmYmYtriv},
\ref{rem:braidcommmYmYsigma},
Corollary \ref{thm:reflection}, and Proposition \ref{cor:reflection},
Theorem \ref{theorem:grouphomo1} may be summarized by saying that
every left $(L, m, \eta)$-module $(X, m_X)$ in $\setH$,
together with
a family (indexed by $X$) of homomorphisms of the group $G$,
can produce a dynamical reflection map
$k: L\otimes X\to L\otimes X\in\setH$
\eqref{eq:k}.

In this section, we construct examples of dynamical reflection maps
and reflection maps
from this viewpoint
(Cf.\ \cite[Section 3]{decommer}).

We first present examples of left $(L, m, \eta)$-module $(X, m_X)$.
\begin{example}\label{ex:leftregular}
$(L, m)$ is a left $(L, m, \eta)$-module,
as is easy to check.
Here, $m: L\otimes L\to L$ is defined by
\eqref{eq:m}.
This module is called left regular.
\end{example}

The set $X$ of one element is a left $(L, m, \eta)$-module.
\begin{example}
Let $X$ be a set of one point.
Write $X=\{ x\}$.
We fix any $\lambda_1\in H$,
and define
$\lambda\cdot_Xx=\lambda_1$.
Then
$(X, \cdot_X)$ is an object of $\setH$;
moreover,
$(X, m_X)$ is a left $(L, m, \eta)$-module.
Obviously, $m_X(\lambda)(a, x)=x$
$(\lambda\in H, a\in L)$.
\end{example}

Every nonempty
set $X$ has a left $(L, m, \eta)$-module structure.
\begin{example}\label{standardexofmX}
Let $L\times X\ni (a, x)\mapsto ax\in X$
be a map satisfying
that,
for all $a\in L$ and $y\in X$,
there uniquely exists $x\in X$ such that $ax=y$.
We will use the symbol $a\backslash y$ to denote this unique element $x\in X$.
Thus, $a(a\backslash y)=y$
and
$a\backslash(ay)=y$.

For any map $f: X\to H(=L)$,
we define
$\lambda\cdot_Xx\in H$
$(\lambda\in H, x\in X)$
by
\begin{equation}
\lambda\cdot_Xx=f(e_L\backslash(\lambda x))
\ (\lambda\in H(=L), x\in X).
\label{eq:defcdotX}
\end{equation}
Then
\begin{equation}
\label{eq:cdotX}
\lambda\cdot_X(\lambda\backslash((\lambda a)x))=
(\lambda a)\cdot_Xx
\end{equation}
for all $\lambda\in H(=L), a\in L$,
and $x\in X$. 
The proof is immediate from the definition of
$\lambda\cdot_Xx\in H$.
$(X, \cdot_X)$ is hence an object of $Set_H$.
\begin{remark}\label{remstmX}
If a map $\cdot_X: L\times X\to X$
satisfies \eqref{eq:cdotX},
then
the set $\{ e_L\cdot_Xy\in L\mid
y\in X\}$ completely
determines $\lambda\cdot_Xx$
for all $\lambda\in H(=L)$ and $x\in X$.
In fact,
$\lambda\cdot_Xx=
e_L\cdot_X(e_L\backslash(\lambda x))$
from \eqref{eq:cdotX}.
\end{remark}

Let $m_X(\lambda)$
$(\lambda\in H)$ denote the map from $L\times X$ to $X$ defined by
\[
m_X(\lambda)(a, x)=\lambda\backslash((\lambda a)x) \ (a\in L, x\in X).
\]
It follows from \eqref{eq:cdotX}
that $m_X: L\otimes X\to X$ is a morphism of $\setH$.

Moreover,
$(X, m_X)$ is a left $(L, m, \eta)$-module.
In fact,
for $\lambda\in H(=L), a, b\in L$, and $x\in X$,
\begin{align*}
(m_X(m\otimes 1_X))(\lambda)((a, b), x)
=&
\lambda\backslash(((\lambda a)b)x)
\\
=&
\lambda\backslash((\lambda a)((\lambda a)\backslash(((\lambda a)b)x)))
\\
=&
(m_X(1_L\otimes m_X)a_{LLX})(\lambda)((a, b), x),
\end{align*}
and
$(m_X(\eta\otimes 1_X))(\lambda)(\bullet, x)
=
\lambda\backslash((\lambda e_L)x)
=\lambda\backslash(\lambda x)
=x
=l_X(\lambda)(\bullet, x)$.
The morphism $m_X: L\otimes X\to X$ thus satisfies
\eqref{eq:leftmod2}
and
\eqref{eq:leftmod1}.
\end{example}

Let $(X, m_X)$ be a left $(L, m, \eta)$-module,
and
let $\lambda_0(\in L)$ denote the unique element satisfying
$\pi(\lambda_0)=e_G$.
Here, $e_G$ is the unit element of the group $G$.
Next we introduce 
a family 
$\{ f^{\lambda_0}_x: G\to G \mid x\in X\}$ of homomorphisms of the group $G$.
\begin{example}\label{ex:1}
Let $f^{\lambda_0}_{x}:
G \to G$ $(x \in X)$
denote the group homomorphisms of $G$ defined by
$f^{\lambda_0}_x(a)=e_G$
$(a\in G)$.
By means of \eqref{eq:Pi},
the family
$\{ f^{\lambda_0}_x: G\to G \mid x\in X\}$ produces
$\Pi^\lambda_x(a)=e_L$ 
$(\lambda\in H(=L), a\in L, x\in X)$,
and
the dynamical reflection map
$k:L \otimes X \to L \otimes X$
in \eqref{eq:REkusePi}
is consequently
$k(\lambda)(a, x) = (e_{L}, m_{X}(\lambda)(a, x))$
(Cf.\ \cite[Example 3.1]{decommer}).
\end{example}
\begin{example}
For every $x \in X$,  we set $f^{\lambda_0}_{x}=
1_{G}$, which is a group homomorphism. 
By means of \eqref{eq:Pi},
the family
$\{ f^{\lambda_0}_x: G\to G \mid x\in X\}$ yields
\begin{equation}
\label{eq:exid}
\Pi^\lambda_x(a)=\lambda\backslash
\pi^{-1}(\pi(\lambda)\pi(\lambda a)\pi(\lambda)^{-1})
\quad(\lambda\in H(=L), a\in L, x\in X),
\end{equation}
and
the dynamical reflection map
$k:L \otimes X \to L \otimes X$
in \eqref{eq:REkusePi}
is 
\begin{align*}
&k(\lambda)(a, x) 
\\
= &(\lambda\backslash
\pi^{-1}(\pi(\lambda)\pi(\lambda a)\pi(\lambda)^{-1}),
\\
&\quad
m_X(\pi^{-1}(\pi(\lambda)\pi(\lambda a)\pi(\lambda)^{-1}))
(\pi^{-1}(\pi(\lambda)\pi(\lambda a)\pi(\lambda)^{-1})\backslash(\lambda a), x)).
\end{align*}
If the group $G$ is abelian,
then
$\Pi^\lambda_x(a)=a$ 
$(\lambda\in H(=L), a\in L, x\in X)$
from \eqref{eq:exid}
(Cf.\ \cite[Example 3.4]{decommer}),
and
the dynamical reflection map is trivial:
$k(\lambda)(a, x)=
(a, x)$.
\end{example}

If the group $G$ is abelian,
then the map
$G\ni a\mapsto a^{-1}\in G$ is a group homomorphism.
\begin{example}\label{ex:a-1}
We assume that the group $G$ is abelian. 
For every $x \in X$, 
the map $f^{\lambda_0}_{x}:
G \to G$ is defined
by $f^{\lambda_0}_{x}(a) = a^{-1}$
$(a \in G)$.
From \eqref{eq:Pi},
the family
$\{ f^{\lambda_0}_x: G\to G \mid x\in X\}$ gives birth to
$\Pi^\lambda_x(a)=\lambda\backslash
\pi^{-1}(\pi(\lambda)^2\pi(\lambda a)^{-1})$
$(\lambda\in H(=L), a\in L, x\in X)$,
and
the dynamical reflection map
$k:L \otimes X \to L \otimes X$
in \eqref{eq:REkusePi}
is 
\begin{align*}
&k(\lambda)(a, x) 
\\
= &(\lambda\backslash
\pi^{-1}(\pi(\lambda)^2\pi(\lambda a)^{-1}),
\\
&\quad
m_X(\pi^{-1}(\pi(\lambda)^2\pi(\lambda a)^{-1}))
(\pi^{-1}(\pi(\lambda)^2\pi(\lambda a)^{-1})\backslash(\lambda a), x)).
\end{align*}
\end{example}

A family of the inner automorphisms of the group $G$ can produce
a dynamical reflection map.
\begin{example}\label{ex:2}
Let $g_{x}$ $(x \in X)$
be elements of $G$, 
and let $f^{\lambda_0}_{x}:G \to G$ $(x\in X)$
denote the inner automorphisms
of the group $G$:
$f^{\lambda_0}_x(a) = g_x^{-1} a g_x$
$(a \in G)$. 
Then
the dynamical reflection map
$k:L \otimes X \to L \otimes X$ in $\eqref{eq:REkusePi}$
is 
$
k(\lambda)(a, x) 
=(\lambda\backslash b,
m_X(b)(b\backslash(\lambda a), x))$
for
$\lambda \in H, a \in L$, and $x \in X$
(Cf.\ \cite[Examples 3.6 and 3.7]{decommer}).
Here, 
\[
b=
\pi^{-1}(\pi(\lambda)(g_{m_X(\lambda_0)(\lambda_0\backslash(\lambda a), x)})^{-1}
\pi(\lambda a)\pi(\lambda)^{-1}
g_{m_X(\lambda_0)(\lambda_0\backslash(\lambda a), x)}).
\]
\end{example}

If a left $(L, m, \eta)$-module $(X, m_X)$ is left regular
(Example \ref{ex:leftregular})
and
$g_c=\pi(\lambda_0 c)$
for $c\in L(=X)$,
then
the dynamical reflection map
$k:L \otimes L \to L \otimes L$ in the above example
is exactly the morphism $\sigma^2: L\otimes L\to L\otimes L$
(Cf.\ \cite[Examples 4.3 and 8.6]{decommer}).

The following example shows that there exists a dynamical reflection map 
$k: L \otimes X \to L \otimes X$ that is dependent on $ \lambda \in H$. 
\begin{example}
\label{ex:dependent}
Let $(L, \cdot, e_{L})$ denote the left quasigroup with a unit in 
Example \ref{leftquasiexample}.
We set 
$H = L$
and $X = \{x_{1}, x_{2}, x_{3} \}$.
The map 
$L \times X \ni (a, x) \mapsto ax \in X$ is defined
by Table 2. 
We note that, for any
$a \in L$ and $ y \in X$,
there uniquely exists 
$x \in X$
such that
$ax = y$.
\begin{table}[t]
\caption{ The map $L \times X \ni (a, x) \mapsto ax \in X$}
\begin{center}
  \begin{tabular}{l | r  r  r } 
\hline
    & $x_{1}$ & $x_{2}$ & $x_{3}$  \\  \hline
   $e_{L}$ &  $x_{1}$ & $x_{2}$ & $x_{3}$ \\ 
   $l_{1}$ & $x_{2}$ & $x_{1}$ & $x_{3}$ \\  
   $l_{2}$ & $x_{3}$ & $x_{2}$ & $x_{1}$ \\ 
   $l_{3}$ & $x_{1}$ & $x_{3}$ & $x_{2}$ \\ 
   $l_{4}$ & $x_{2}$ & $x_{3}$ & $x_{1}$  \\ 
   $l_{5}$ & $x_{3}$ & $x_{1}$ & $x_{2}$  \\ \hline
  \end{tabular}
\end{center}
\end{table}

On account of Example \ref{standardexofmX}, 
we can define $\lambda\cdot_Xx \in L$
$(\lambda \in H, x \in X)$ 
by \eqref{eq:defcdotX}, 
in which the following map $f: X \to H$ is used: 
$f(x_1) = l_2; f(x_2) = l_4$; and $f(x_3) = l_3$. 
We present the map $\cdot_X: H\times X\to H$ in Table 3. 
\begin{table}[t]
\caption{ The map $\cdot_X: H \times X \to H$ }
\begin{center} 
  \begin{tabular}{l | r  r  r } 
\hline
    & $x_{1}$ & $x_{2}$ & $x_{3}$  \\ \hline 
   $e_{L}$ &  $l_{2}$ & $l_{4}$ & $l_{3}$ \\ 
   $l_{1}$ & $l_{4}$ & $l_{2}$ & $l_{3}$ \\  
   $l_{2}$ & $l_{3}$ & $l_{4}$ & $l_{2}$ \\ 
   $l_{3}$ & $l_{2}$ & $l_{3}$ & $l_{4}$ \\ 
   $l_{4}$ & $l_{4}$ & $l_{3}$ & $l_{2}$  \\ 
   $l_{5}$ & $l_{3}$ & $l_{2}$ & $l_{4}$  \\ \hline
  \end{tabular}
\end{center}
\end{table}
Example \ref{standardexofmX} can produce 
a left $(L, m, \eta)$-module $(X, m_X)$.

Let 
$G$
denote the symmetric group
$S_{3}$ of $\{1, 2, 3 \}$.
We next define a bijection $\pi: L \to G$ by: $ 
\pi(e_{L}) = \id; \pi (l_{1}) = (123); \pi(l_{2}) = (132);  \pi(l_{3}) = (12); \pi(l_{4}) = (13)$; 
and $\pi(l_{5}) = (23)$.
We write
$g_{x_1} = (132), g_{x_2} = (13)$, and $g_{x_3} = (12).$

The dynamical reflection map 
$k(\lambda): L \otimes X \to L \otimes X$ by  
Examples \ref{standardexofmX} and  \ref{ex:2} depends on 
$\lambda \in H (= L)$.
In fact,
$k(l_{1})(l_{2}, x_{2}) = (l_{2}, x_{2})$
and
$k(l_{3})(l_{2}, x_{2}) = (l_{5}, x_{1})$.
\end{example}
\begin{remark}
Each family of the group homomorphisms 
$f^{\lambda_0}_{x}:G \to G$ $(x\in X)$ in Examples
\ref{ex:1}--\ref{ex:2} is defined regularly;
however, we can choose them more variously,
because we do not need any condition among the group homomorphisms
$f^{\lambda_0}_{x}:G \to G$ $(x\in X)$.
For example, we can produce a dynamical reflection map
by means of the group homomorphisms
$f^{\lambda_0}_{x}:G \to G$ $(x\in X)$
defined by
\[
f^{\lambda_0}_{x}(a)
=
\begin{cases}
e_G,&\mbox{for some $x\in X$};
\\
a,&\mbox{otherwise}.
\end{cases}
\]
\end{remark}

At the end of this section,
we deal with the reflection map,
a dynamical reflection map 
$k(\lambda): L\times X\to L\times X$
that is independent on the dynamical parameter
$\lambda\in H$.
For this purpose, we first introduce 
a necessary and sufficient condition for
the dynamical Yang-Baxter map 
$\sigma(\lambda): L\times L\to L\times L$
\eqref{eq:sigma}
to be independent on the dynamical parameter $\lambda$.

Let $*: L \times L \to L$ denote the binary operation defined 
by 
\begin{equation}
\label{eq:group*}
a * b = \pi^{-1}(\pi(a)\pi(b))\quad (a, b \in L).
\end{equation}
That is to say, 
$(L, *)$ is a group isomorphic to the group $G$ 
via the bijection $\pi: L \to G$.  
\begin{proposition}\label{prop:indeconditionsigma}
If $(L, \cdot, e_{L})$ is a group, 
then the following two conditions are equivalent\/$:$
\begin{enumerate}
\item[$(1)$]
$\pi(e_L) = e_G$,
and the map $\sigma(\lambda)$ defined by $\eqref{eq:sigma}$
is independent on the dynamical parameter $\lambda \in H$.
Here, $e_G$ is the unit element of the group $G$\/$;$
\item[$(2)$]
$a (b * c) = (a b) * a^{-1}* (a c)$ for all $a, b, c \in L$.
Here, the element $a^{-1} (\in L)$ is the inverse of 
$a \in L$ with respect to 
the group structure $(L, *)$.
\end{enumerate}
Under the above conditions,
\begin{align}
\label{eq:YBmap}
\sigma(\lambda)(a, b)
=(a^{-1}*(ab),
\overline{a^{-1}*(ab)}ab)
\quad(a, b\in L).
\end{align}
Here, $\overline{c}(\in L)$ is the inverse of $c \in L$ 
with respect to the group structure $(L, \cdot)$.
\end{proposition}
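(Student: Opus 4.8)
The plan is to transport the whole computation to the two group structures on $L$ and to isolate a single functional equation governing the independence of $\sigma(\lambda)$. Since $(L,\cdot,e_L)$ is a group, $\lambda\backslash c=\overline{\lambda}c$ for $c\in L$; by \eqref{eq:group*} one has $\pi^{-1}(\pi(x)\pi(y))=x*y$ and $\pi^{-1}(\pi(x)^{-1})=x^{-1}$, the $*$-inverse. Throughout, juxtaposition denotes $\cdot$ and $x^{-1}$ the $*$-inverse of $x$. Then \eqref{eq:mu1G}, \eqref{eq:xi}, \eqref{eq:eta} give $\lambda\xi_\lambda(a,b)=\lambda*(\lambda a)^{-1}*((\lambda a)b)$ and $(\lambda\xi_\lambda(a,b))\eta_\lambda(a,b)=(\lambda a)b=\lambda(ab)$. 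I claim that $\sigma$ is independent of $\lambda$ and of the form \eqref{eq:YBmap} if and only if
\[
\lambda*(\lambda a)^{-1}*((\lambda a)b)=\lambda\bigl(a^{-1}*(ab)\bigr)\qquad(\lambda,a,b\in L).\qquad(\ast)
\]
Indeed, if $(\ast)$ holds then $\xi_\lambda(a,b)=\overline{\lambda}\lambda\bigl(a^{-1}*(ab)\bigr)=a^{-1}*(ab)$, and since now $\lambda\xi_\lambda(a,b)=\lambda\bigl(a^{-1}*(ab)\bigr)$, cancelling $\lambda$ on the left of $(\lambda\xi_\lambda(a,b))\eta_\lambda(a,b)=\lambda(ab)$ gives $\eta_\lambda(a,b)=\overline{a^{-1}*(ab)}\,(ab)$, which is \eqref{eq:YBmap} and manifestly $\lambda$-free; conversely, if $\xi_\lambda(a,b)$ does not depend on $\lambda$ it equals $\xi_{e_L}(a,b)$, and when $\pi(e_L)=e_G$ the latter is $\pi^{-1}(\pi(a)^{-1}\pi(ab))=a^{-1}*(ab)$, whence $(\ast)$. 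So it suffices to show that each of $(1)$ and $(2)$ is equivalent to ``$\pi(e_L)=e_G$ together with $(\ast)$''.

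First I would prove $(2)\Rightarrow(1)$ and \eqref{eq:YBmap}. Putting $a=b=c=e_L$ in $(2)$: the left side is $e_L*e_L$, while the right side is $e_L*e_L^{-1}*e_L=e_L$; hence $\pi(e_L)^2=\pi(e_L)$, i.e.\ $\pi(e_L)=e_G$ and $e_L=\lambda_0$. Next, $(2)$ with $(a,b,c)=(\lambda,a^{-1},a)$ gives $\lambda=(\lambda a^{-1})*\lambda^{-1}*(\lambda a)$, since $a^{-1}*a=e_L$ and $\lambda e_L=\lambda$; equivalently $(\lambda a^{-1})*\lambda^{-1}=\lambda*(\lambda a)^{-1}$. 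Then $(2)$ with $(a,b,c)=(\lambda,a^{-1},ab)$ gives $\lambda\bigl(a^{-1}*(ab)\bigr)=(\lambda a^{-1})*\lambda^{-1}*(\lambda ab)$; substituting the identity just obtained and using $\lambda ab=(\lambda a)b$ yields $(\ast)$. By the first paragraph, $(1)$ and \eqref{eq:YBmap} follow.

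Next I would prove $(1)\Rightarrow(2)$. Since $\pi(e_L)=e_G$ and $\sigma(\lambda)$ is $\lambda$-free, $\xi_\lambda(a,b)=\xi_{e_L}(a,b)=a^{-1}*(ab)$, i.e.\ $(\ast)$ holds. Applying $(\ast)$ with $(\lambda,a,b)=(b,\overline{b},c)$, its left side is $b*c$ (using $b\overline{b}=e_L$, $\pi(e_L)=e_G$), so $b*c=b\bigl(\overline{b}^{-1}*(\overline{b}c)\bigr)$, where $\overline{b}^{-1}$ is the $*$-inverse of $\overline{b}$. Applying $(\ast)$ with $(\lambda,a,b)=(ab,\overline{b},c)$, and using $(ab)\overline{b}=a$ and $((ab)\overline{b})c=ac$, its left side is $(ab)*a^{-1}*(ac)$ while its right side is $(ab)\bigl(\overline{b}^{-1}*(\overline{b}c)\bigr)=a\bigl(b(\overline{b}^{-1}*(\overline{b}c))\bigr)=a(b*c)$ by the previous step; hence $a(b*c)=(ab)*a^{-1}*(ac)$, which is $(2)$. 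In particular \eqref{eq:YBmap} holds whenever $(1)$ or $(2)$ does.

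I expect the main obstacle to be purely notational: carrying along two group laws on the same set $L$, with distinct units $e_L,\lambda_0$ and distinct inverses $\overline{\,\cdot\,}$ and $(\,\cdot\,)^{-1}$, without confusing them under the substitutions. Conceptually the argument is short: it is all packaged into the single equation $(\ast)$, and each implication needs only one or two well-chosen specializations (of $(2)$, resp.\ of $(\ast)$), together with the elementary observation that $(2)$ forces $\pi(e_L)=e_G$.
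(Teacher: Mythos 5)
Your proposal is correct and follows essentially the same route as the paper: your pivot equation $(\ast)$ is exactly the paper's identity \eqref{eq:inde} (the statement $\xi_\lambda=\xi_{e_L}$ written out) left-multiplied by $\lambda$, the implication $(1)\Rightarrow(2)$ uses the same two specializations $(ab,\overline{b},c)$ and $(b,\overline{b},c)$, and $(2)\Rightarrow(1)$ likewise begins by extracting $\pi(e_L)=e_G$ from a substitution into $(2)$ and then applies $(2)$ twice to recover $(\ast)$. The only differences are organizational (you name $(\ast)$ explicitly and route both conditions through it) and the particular substitutions chosen in the $(2)\Rightarrow(\ast)$ step, neither of which changes the substance.
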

\begin{proof}
Let us first prove the condition $(2)$ from $(1)$.
Since $\sigma(\lambda)$
is independent on the dynamical parameter $\lambda$,
$\xi_{\lambda}(a, b) = \xi_{e_L}(a, b)$ 
for all $\lambda \in H (= L)$ and $a, b \in L$. 
From \eqref{eq:mu1G}, \eqref{eq:xi}, and $\pi(e_L) = e_G$,
\begin{equation}
\overline{\lambda} \pi^{-1}(\pi(\lambda) \pi(\lambda a)^{-1} \pi((\lambda a) b)) 
= \pi^{-1}(\pi(a)^{-1} \pi( a b))
\label{eq:inde}
\end{equation}
for all $\lambda \in H$ and $a, b \in L$. 

Because of $\eqref{eq:inde}$, $\pi(e_L) = e_G$, and the fact that
\begin{equation}
\label{eq:a-1}
a^{-1} = \pi^{-1}(\pi(a)^{-1}),
\end{equation} 
\begin{align*}
 (a b) * a^{-1} * (a c) =& \pi^{-1}(\pi(ab)\pi(a)^{-1}\pi(ac)) \\
=& ab\pi^{-1}(\pi(\overline{b})^{-1}\pi(\overline{b}c)) \\
=& ab(\overline{b} \pi^{-1}(\pi(b)\pi(c))) \\
=& a (b * c)
\end{align*}
for all $a, b, c \in L$, which is the desired conclusion.

Next we show the condition $(1)$ from $(2)$. 
It follows from the condition $(2)$ that
$\pi(e_{L}) = e_{G}$.
In fact,
by substituting $e_L$ for $a$ and $c$ in the condition $(2)$,
we obtain
$b*e_L=b*e_L^{-1}*e_L=b$
for every $b\in L$,
which means that $e_L$ is the unit element
of the group $(L, *)$.
Hence,
$\pi(e_{L}) = e_{G}$.

From the condition $(2)$,
$
a\pi^{-1}(\pi(b)\pi(c)) = \pi^{-1}(\pi(ab)\pi(a)^{-1}\pi(ac))
$
for all $a, b, c \in L$.
On account of this equation, \eqref{eq:xi}, and $\pi(e_L) = e_G$, 
\begin{align}
\xi_{\lambda}(a, b) =& \overline{\lambda} \pi^{-1}(\pi(\lambda)
\pi(\lambda a)^{-1}\pi((\lambda a) b)) 
\label{eq:xiindeplambda}
\\
=& \overline{\lambda} (\lambda a\pi^{-1}(\pi(\overline{a})\pi(b))) 
\notag
\\
=& \pi^{-1}(\pi(a)^{-1}\pi(ab)),
\notag
\end{align}
and $\xi_{\lambda}(a, b)$ is thus independent on the dynamical parameter $\lambda$. 

In view of \eqref{eq:eta} and the fact that $(L, \cdot)$ is a group, 
\begin{equation}
\label{eq:indepeta}
\eta_{\lambda}(a, b) = \overline{(\lambda\xi_{\lambda}(a, b))} \lambda a b 
= \overline{\xi_{\lambda}(a, b)} a b,
\end{equation} 
and $\eta_{\lambda}(a, b)$ is also independent 
on $\lambda$. 
Therefore $(1)$ holds.

\eqref{eq:sigma},
\eqref{eq:group*},
and \eqref{eq:a-1}--\eqref{eq:indepeta}
immediately induce 
\eqref{eq:YBmap}, and the proof is complete.
\end{proof}

The set $L$ 
in Proposition \ref{prop:indeconditionsigma}
is a skew left brace \cite{decommer, guarnieri}. 
\begin{definition}\label{def:skewleftbrace}
$(L, \cdot, *)$ is a skew left brace,
iff
$(L, \cdot)$ and $(L, *)$ are groups satisfying
the condition $(2)$
in Proposition \ref{prop:indeconditionsigma}. 
\end{definition}
From this definition,
Proposition \ref{prop:indeconditionsigma} may be summarized 
by saying that the structure of the skew left brace is 
a necessary and sufficient condition 
for the dynamical Yang-Baxter map $\sigma$
\eqref{eq:sigma} to be a Yang-Baxter map 
(Cf.  \cite[Theorem 5.3]{decommer}).

In the reminder of this section
we assume that $(L, \cdot, *)$ is a skew left brace.
Then 
the map $m(\lambda): L \times L \to L$
$(\lambda \in H (= L))$ 
defined by \eqref{eq:m} is independent on $\lambda$, 
since $m(\lambda)(a, b) = \overline{\lambda} \lambda a b = ab$ for all $a, b \in L$.
As a result, the map $m(\lambda)$ is exactly a binary operation of the group 
$(L, \cdot)$; 
that is to say, the monoid $(L, m, \eta)$ is the group $(L, \cdot)$.

Let $X$ be a left $(L, \cdot)$-action. 
That is, for any $a \in L$ and $x \in X$, 
there exists $ax \in X$ satisfying $(ab)x = a(bx)$ 
and  $e_L x = x$  for all $a, b\in L$ and $x \in X$.
\begin{proposition}
For any map $f: X \to H(=L)$, we define 
$\lambda\cdot_Xx\in H$
$(\lambda\in H, x \in X)$
by
$\lambda\cdot_Xx = f(\lambda x)$.
Then 
$(X, \cdot_X)$
is an object of $Set_H$.
\end{proposition}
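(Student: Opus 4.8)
The plan is to unwind the definition of an object of $\setH$ recalled at the beginning of Section \ref{section:setH}: a pair $(X,\cdot_X)$ is an object of $\setH$ precisely when $X$ is a set and $\cdot_X$ is a map from $H\times X$ to $H$; no further axioms are imposed at the level of objects (all the compatibility conditions enter only through the morphisms). Consequently the entire statement reduces to checking that the prescription $\lambda\cdot_X x=f(\lambda x)$ yields a well-defined map $H\times X\to H$.

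First I would observe that $X$ is a set by hypothesis and that $H=L$. Next, for a fixed pair $(\lambda,x)\in H\times X$, the element $\lambda x$ is a well-defined element of $X$: since $X$ is a left $(L,\cdot)$-action, the action associates to $(\lambda,x)$ a unique element $\lambda x\in X$. Composing with the given map $f\colon X\to H$ produces $f(\lambda x)\in H$, and this element depends only on the pair $(\lambda,x)$. Hence $\cdot_X\colon H\times X\to H$, $(\lambda,x)\mapsto f(\lambda x)$, is a genuine map, and therefore $(X,\cdot_X)$ is an object of $\setH$.

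Since this is a direct verification of the (purely set-theoretic) object axiom, I do not expect any real obstacle; the only point worth flagging is that one must not conflate this with the stronger assertion that $(X,\cdot_X)$ underlies a left $(L,m,\eta)$-module, which would additionally require a morphism $m_X\colon L\otimes X\to X$ satisfying \eqref{eq:leftmod2} and \eqref{eq:leftmod1}. That is a separate statement, dealt with along the lines of Example \ref{standardexofmX}, and is not being claimed here.
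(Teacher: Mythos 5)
Your proof is correct and coincides with what the paper intends: the paper in fact states this proposition without any proof, precisely because an object of $\setH$ requires nothing beyond a set together with an arbitrary map $H\times X\to H$, which is exactly what you verify. Your closing remark correctly separates this from the subsequent (and genuinely nontrivial) claim that $m_X$ is a morphism making $(X,m_X)$ a left $(L,m,\eta)$-module.
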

We set $m_X(\lambda)(a, x) = ax$
for all $\lambda \in H, a \in L$, and $x \in X$. 
Because $\lambda\cdot_Xm_X(\lambda)(a, x) 
= (\lambda a)\cdot_Xx$
$(\lambda \in H, a \in L, x \in X)$, 
$m_X: L \otimes X \to X$ is a morphism of $Set_H$, 
and moreover
\begin{proposition}
$(X, m_X)$ is a left $(L, m, \eta)$-module in $Set_H$.
\end{proposition}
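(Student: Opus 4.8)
The plan is to check the two defining conditions of a left module, \eqref{eq:leftmod2} and \eqref{eq:leftmod1}, directly from the definitions, using only the two axioms of a left group action, $(ab)x=a(bx)$ and $e_Lx=x$ for all $a,b\in L$ and $x\in X$. We have already observed that $m_X\colon L\otimes X\to X$ is a morphism of $\mathrm{Set}_H$, and that the monoid $(L,m,\eta)$ here is nothing but the group $(L,\cdot)$; so only these two identities remain.

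First I would verify \eqref{eq:leftmod2}. Unwinding the definition of the tensor product of morphisms together with the formula for $\eta$ in \eqref{eq:m}, one gets $(\eta\otimes 1_X)(\lambda)(\bullet,x)=(e_L,x)$ for $\lambda\in H$ and $x\in X$, hence $(m_X(\eta\otimes 1_X))(\lambda)(\bullet,x)=m_X(\lambda)(e_L,x)=e_Lx=x=l_X(\lambda)(\bullet,x)$ by the unit axiom of the action. This is \eqref{eq:leftmod2}.

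Next I would verify \eqref{eq:leftmod1}. For $\lambda\in H$, $a,b\in L$ and $x\in X$, the definition of $m$ in \eqref{eq:m} and of $\otimes$ on morphisms give $(m_X(m\otimes 1_X))(\lambda)((a,b),x)=m_X(\lambda)(ab,x)=(ab)x$; on the other hand, applying $a_{LLX}$ and then $1_L\otimes m_X$ and then $m_X$, and using that $m_X(\mu)(b,x)=bx$ for every $\mu\in H$, one gets $(m_X(1_L\otimes m_X)a_{LLX})(\lambda)((a,b),x)=m_X(\lambda)(a,bx)=a(bx)$. The two sides coincide by the associativity axiom $(ab)x=a(bx)$ of the action, which establishes \eqref{eq:leftmod1}.

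I do not expect any genuine obstacle: the whole verification collapses to the two group-action identities, since $1_X$ acts as the identity in every tensor factor and the constraint $a_{LLX}$ only re-brackets, so no dynamical parameter actually enters the computation and it is the classical one. The only point requiring a little care is the bookkeeping of second components under $\otimes$ and under $a_{LLX}$; the content of the statement is simply to record that the action construction of Example \ref{standardexofmX} still produces a left module in $\mathrm{Set}_H$ in the degenerate situation where the monoid $(L,m,\eta)$ is an honest group.
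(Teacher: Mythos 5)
Your verification is correct, and it is exactly the routine direct check the paper intends: the paper states this proposition without proof, and the analogous computation it does spell out (in Example \ref{standardexofmX}) reduces \eqref{eq:leftmod2} and \eqref{eq:leftmod1} to the unit and associativity axioms of the action in just the way you describe. Your observation that the dynamical parameter shift in $1_L\otimes m_X$ is harmless because $m_X(\mu)(b,x)=bx$ is independent of $\mu$ is the only point of substance, and you handle it correctly.
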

That is,
every $(L, \cdot)$-action $X$, 
together with a suitable $\cdot_X: H\times X\to H$, can be regarded
as a left module of $(L, \cdot)$ in $\setH$.

From the above proposition
and $\lambda_0=e_L$, 
a left $(L, \cdot)$-action $X$, together with a family 
$\{ f^{e_L}_{x} \mid x \in X \}$
of homomorphisms of the group $G$ isomorphic to
$(L, *)$,
can produce a dynamical reflection map
$k(\lambda): L \times X \to L \times X$
$(\lambda \in H)$ 
associated with the Yang-Baxter map
$\sigma: L \times L \to L \times L$
\eqref{eq:YBmap}.

We are now in a position to show a necessary and sufficient condition 
for the dynamical reflection map $k(\lambda)$ 
to be independent on the dynamical parameter $\lambda \in H$.
\begin{proposition}\label{prop:indeconditionk}
The following two conditions are equivalent\/$:$
\begin{enumerate}
\item[$(1)$]
The dynamical reflection map $k(\lambda)$ is independent on $\lambda$\/$;$
\item[$(2)$]
$
\pi(\lambda)f^{e_L}_{\lambda (ax)}(\pi(\lambda a))
=
\pi(\lambda \pi^{-1}(f^{e_L}_{ax}(\pi(a))))
f^{e_L}_{\lambda (ax)}(\pi(\lambda))$
for all $\lambda \in H (= L), a \in L$, and $x \in X$.
\end{enumerate}
Under the above conditions,
\begin{equation}
k(\lambda)(a, x) = (\pi^{-1}(f^{e_L}_{ax}(\pi(a))),
(\overline{\pi^{-1}(f^{e_L}_{ax}(\pi(a)))}a)x)
\quad(a\in L, x\in X).
\label{eq:relationofindependentk}
\end{equation}
\end{proposition}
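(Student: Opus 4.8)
The plan is to feed the skew-left-brace data into the explicit description \eqref{eq:REkusePi} of $k$ and then compare $k(\lambda)$ with $k(e_L)$; since $\lambda_0=e_L$ in this setting, $k(e_L)$ is the natural candidate for the $\lambda$-independent map, and its value will turn out to be the right-hand side of \eqref{eq:relationofindependentk}.

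First I would collect the simplifications that hold here. Because $(L,\cdot,*)$ is a skew left brace, Proposition \ref{prop:indeconditionsigma} gives $\pi(e_L)=e_G$; moreover $m(\lambda)(a,b)=ab$ and $m_X(\lambda)(a,x)=ax$ do not depend on $\lambda$, and $a\backslash c=\overline{a}\,c$ for $a,c\in L$ since $(L,\cdot)$ is a group. Substituting these, together with $\lambda_0=e_L$ and $m_X(e_L)(\lambda,x)=\lambda x$, into \eqref{eq:Pi} yields
\[
\Pi^\lambda_x(a)=\overline{\lambda}\,\pi^{-1}\!\bigl(\pi(\lambda)\,f^{e_L}_{\lambda x}(\pi(\lambda a))\,f^{e_L}_{\lambda x}(\pi(\lambda))^{-1}\bigr)\qquad(\lambda\in H,\ a\in L,\ x\in X).
\]
Hence by \eqref{eq:REkusePi} the first component of $k(\lambda)(a,x)$ is $\Pi^\lambda_{ax}(a)$, and, writing $p=\Pi^\lambda_{ax}(a)$, its second component is $m_X(\lambda p)\bigl((\lambda p)\backslash(\lambda a),x\bigr)=(\overline{p}\,a)x$, where I used $(\lambda p)\backslash(\lambda a)=\overline{p}\,\overline{\lambda}\,\lambda\,a=\overline{p}\,a$ and the $\lambda$-independence of the $L$- and $X$-actions. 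Specialising to $\lambda=e_L$ and using $\pi(e_L)=e_G$, $e_L x=x$, and that $f^{e_L}_{ax}$ is a group homomorphism (so $f^{e_L}_{ax}(e_G)=e_G$), the first component of $k(e_L)(a,x)$ collapses to $\pi^{-1}(f^{e_L}_{ax}(\pi(a)))$.

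Next I would prove (1)$\Rightarrow$(2): if $k(\lambda)$ is $\lambda$-independent, equating the first components of $k(\lambda)(a,x)$ and $k(e_L)(a,x)$ gives $\overline{\lambda}\,\pi^{-1}\!\bigl(\pi(\lambda)\,f^{e_L}_{\lambda(ax)}(\pi(\lambda a))\,f^{e_L}_{\lambda(ax)}(\pi(\lambda))^{-1}\bigr)=\pi^{-1}(f^{e_L}_{ax}(\pi(a)))$; left-multiplying by $\lambda$ in $(L,\cdot)$, applying $\pi$, and right-multiplying by $f^{e_L}_{\lambda(ax)}(\pi(\lambda))$ turns this into exactly the identity in (2). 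Conversely, assuming (2), the string $\pi(\lambda)\,f^{e_L}_{\lambda(ax)}(\pi(\lambda a))\,f^{e_L}_{\lambda(ax)}(\pi(\lambda))^{-1}$ telescopes to $\pi\bigl(\lambda\pi^{-1}(f^{e_L}_{ax}(\pi(a)))\bigr)$, so $p=\Pi^\lambda_{ax}(a)=\overline{\lambda}\,\lambda\,\pi^{-1}(f^{e_L}_{ax}(\pi(a)))=\pi^{-1}(f^{e_L}_{ax}(\pi(a)))$, which is $\lambda$-free; inserting this $p$ into the formula $(\overline{p}\,a)x$ for the second component produces \eqref{eq:relationofindependentk} and shows that both components of $k(\lambda)$ are independent of $\lambda$.

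I expect no conceptual obstacle here: once the skew-brace dictionary is in place, the computation is essentially forced. The one place to be careful is the bookkeeping of multiplications in the non-abelian group $(L,\cdot)$ — in particular the identities $\overline{\lambda p}=\overline{p}\,\overline{\lambda}$ and the one-sided cancellations used above must be carried out on the correct side — together with the repeated use of $\pi(e_L)=e_G$ and $f^{e_L}_{x}(e_G)=e_G$ to evaluate $k(e_L)$ and to telescope the products.
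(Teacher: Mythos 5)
Your proposal is correct and follows essentially the same route as the paper: both reduce condition (1) to the single identity $\Pi^\lambda_{ax}(a)=\Pi^{e_L}_{ax}(a)$ via \eqref{eq:REkusePi}, evaluate \eqref{eq:Pi} at $\lambda_0=e_L$ using $\pi(e_L)=e_G$ and $f^{e_L}_{ax}(e_G)=e_G$, and then rearrange to obtain condition (2) and the formula \eqref{eq:relationofindependentk}. Your version merely spells out the group-theoretic bookkeeping (e.g.\ $(\lambda p)\backslash(\lambda a)=\overline{p}\,a$) that the paper leaves implicit.
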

\begin{proof}
From \eqref{eq:REkusePi},
$k(\lambda)(a, x)
=
(\Pi^\lambda_{ax}(a),
(\overline{\Pi^\lambda_{ax}(a)}a)x)$,
and 
the condition $(1)$ is hence equivalent to
the following condition:
\begin{enumerate}
\item[$(3)$]
$\Pi^\lambda_{ax}(a)=\Pi^{e_L}_{ax}(a)$
for all $a\in L$ and $x\in X$.
\end{enumerate}
Since
$f^{e_L}_{ax}: G\to G$ is a group homomorphism,
$f^{e_L}_{ax}(e_G)=e_G$.
Because of \eqref{eq:Pi}
and
$\pi(e_L)=e_G$
(See Proposition \ref{prop:indeconditionsigma}),
the condition $(3)$ is exactly the same as
\[
\overline{\lambda}\pi^{-1}(\pi(\lambda)
f^{e_L}_{\lambda(ax)}
(\pi(\lambda a))
f^{e_L}_{\lambda(ax)}
(\pi(\lambda))^{-1})
=
\pi^{-1}(
f^{e_L}_{ax}
(\pi(a)))
\]
for all $\lambda\in H(=L), a\in L$,
and $x\in X$
(We note that $\lambda_0=e_L$),
which is equivalent to the condition $(2)$.

The above condition $(3)$,
\eqref{eq:Pi},
and
\eqref{eq:REkusePi}
immediately induce
\eqref{eq:relationofindependentk},
and
the proof is complete.
\end{proof}
\begin{remark}
On account of \eqref{eq:relationofindependentk},
this dynamical reflection map $k(\lambda): L\times X\to L\times X$ 
is exactly the same as the reflection map
in \cite[Theorem 8.5]{decommer}.
\end{remark}
By means of the family of group homomorphisms in Example \ref{ex:a-1}, 
we can produce a reflection map associated with the Yang-Baxter map
\eqref{eq:YBmap}.
\begin{example}
We assume that $(L, *)$ is abelian, and then so is the group $G$. 
The family of group homomorphisms in Example \ref{ex:a-1} 
satisfies the condition $(2)$ of Proposition \ref{prop:indeconditionk}. 

In fact, by substituting $\lambda, a$, and $a^{-1}$ 
for $a, b$, and $c$ 
in the condition $(2)$ of Proposition \ref{prop:indeconditionsigma} respectively, 
we obtain
$\lambda * (\lambda a)^{-1} = (\lambda a^{-1})* \lambda^{-1}$ 
for all $\lambda \in H (= L)$ and $a \in L$.
Here we note that $e_L (\in L)$ is the unit element of the group $(L, *)$, 
since $\pi(e_L) = e_G$. 
Because $a*b=\pi^{-1}(\pi(a)\pi(b))$
and
$\lambda^{-1}=\pi^{-1}(\pi(\lambda)^{-1})$
\eqref{eq:a-1},
it follows from the above condition that
$\pi(\lambda)\pi(\lambda a)^{-1}=\pi(\lambda\pi^{-1}(\pi(a)^{-1}))\pi(\lambda)^{-1}$
for all $\lambda \in H (= L)$ and $a \in L$,
which is exactly the same as 
the condition $(2)$ of Proposition \ref{prop:indeconditionk}.

Therefore, for every left $(L, \cdot)$-action $X$,
the dynamical reflection map
constructed here
is independent on the dynamical parameter $\lambda$,
and,
from \eqref{eq:relationofindependentk},
it is a reflection map 
$k: L\times X\ni
(a, x) \mapsto (a^{-1}, (\overline{(a^{-1})}a)x)\in
L \times X$
associated with 
the Yang-Baxter map 
\eqref{eq:YBmap}
as a result.
\end{example}
\begin{remark}
If the group $(L, *)$ is abelian,
then
$(L, \cdot, *)$ is a brace
\cite{guarnieri,rump}.
\end{remark}
\section{Quiver-theoretic solutions to reflection equation}
\label{section:quivertheoretic}
The dynamical Yang-Baxter maps can produce quiver-theoretic solutions
to the (quantum) Yang-Baxter equation,
which is due to \cite{matsumoto2}.
This section demonstrates that
the dynamical reflection maps give birth to
quiver-theoretic solutions to the reflection equation
\eqref{eq:quiverthRE}.

We first introduce the category of quivers following \cite{matsumoto2}.
Let $H$ be a nonempty set.
A quiver over $H$ is,
by definition,
a set
$Q$ with two maps $\mathfrak{s}_Q: Q\to H$
and
$\mathfrak{t}_Q: Q\to H$,
called the source map and the target map respectively.
An object of the category
$\quivH$ 
is a quiver over $H$.
For two quivers $Q$ and $Q'$ over $H$,
a morphism $f: Q\to Q'$ is
a (set-theoretic) map $f: Q\to Q'$
satisfying
$\mathfrak{s}_{Q'}(f(a))=\mathfrak{s}_Q(a)$
and
$\mathfrak{t}_{Q'}(f(a))=\mathfrak{t}_Q(a)$
for every $a\in Q$.
The identity $1_Q: Q\to Q$
is defined by the identity map $1_Q: Q\to Q$,
and
the composition of two morphisms
$f: Q\to Q', g: Q'\to Q''$
is the composition of maps $f$ and $g$:
$gf: Q\to Q''$.

This category $\quivH$ of quivers over $H$
is a tensor category (Definition \ref{def:tensorcat})
with the following fiber product.

For $Q, R\in \quivH$,
we set
$Q\times_HR=\{ (a, b)\in Q\times R\mid\mathfrak{t}_Q(a)=\mathfrak{s}_R(b)\}$.
This fiber product $Q\times_HR$ is an object of $\quivH$,
together with the source map
$\mathfrak{s}_{Q\times_HR}(a, b)=s_Q(a)$
and
the target map
$\mathfrak{t}_{Q\times_HR}(a, b)=\mathfrak{t}_R(b)$.

For two morphisms $f: Q\to Q'$
and $g: R\to R'$ of $\quivH$,
we define the fiber product $f\times_H g: Q\times_HR\to Q'\times_HR'$
by
$f\times_H g=(f\times g)|_{Q\times_H R}$.
In fact,
$(f\times g)(a, b)\in Q'\times_HR'$
for every $(a, b)\in Q\times_HR$.

The ordinary associativity of sets is the associativity constraint of
$\quivH$:
$a_{QRS}: (Q\times_HR)\times_HS\ni((a, b), c)\mapsto(a, (b, c))\in
Q\times_H(R\times_HS)$
for $Q, R, S\in\quivH$.
Obviously, this associativity constraint satisfies the pentagon axiom
\eqref{eq:pentagon}.

The set $H$ with the source and the target maps
$\mathfrak{s}_H=\mathfrak{t}_H=1_H$
is a unit of $\quivH$,
and
the left and the right unit constraints with respect to the unit $H$
are respectively defined by:
$l_Q: H\times_HQ\ni(\lambda, a)\mapsto a\in Q$
and
$r_Q: Q\times_HH\ni(a, \lambda)\mapsto a\in Q$
for every $Q\in\quivH$.

This $(\quivH, \times_H, H, a, l, r)$ is a tensor category
(Definition \ref{def:tensorcat}),
because
the triangle axiom \eqref{eq:triangle} holds.

A useful functor $Q: \setH\to\quivH$ is constructed in
\cite{matsumoto2}.
For $X\in\setH$,
we set $Q(X)=H\times X$.
This $Q(X)$,
together with the source map
$\mathfrak{s}_{Q(X)}(\lambda, x)=\lambda$
and
the target map
$\mathfrak{t}_{Q(X)}(\lambda, x)=\lambda x$,
is an object of $\quivH$.

For a morphism $f: X\to Y$ of $\setH$,
we define the map $Q(f): Q(X)\to Q(Y)$
by $Q(f)(\lambda, x)=(\lambda, f(\lambda)(x))$
$((\lambda, x)\in Q(X))$.
This $Q(f): Q(X)\to Q(Y)$ is a morphism of $\quivH$,
and $Q: \setH\to\quivH$ is hence a functor.

Moreover, this
$Q: \setH\to\quivH$ is a tensor functor
\cite[Definition XI.4.1]{kassel}.
In fact,
$\varphi_0: H\ni\lambda\mapsto(\lambda, \bullet)\in Q(I)$
is an isomorphism of $\quivH$;
and
$\varphi_2(X, Y):
Q(X)\times_HQ(Y)\ni((\lambda, x), (\kappa, y))\mapsto(\lambda, (x, y))
\in Q(X\otimes Y)$ is a natural isomorphism
for all $X, Y\in\setH$.
Since
they satisfy
\begin{align*}
&\varphi_2(X, Y\otimes Z)(1_{Q(X)}\times_H\varphi_2(Y, Z))
a_{Q(X)Q(Y)Q(Z)}
\\
&\quad=
Q(a_{XYZ})\varphi_2(X\otimes Y, Z)(\varphi_2(X, Y)\times_H1_{Q(Z)}),
\\
&l_{Q(X)}=Q(l_X)\varphi_2(I, X)(\varphi_0\times_H1_{Q(X)}),
\\
&r_{Q(X)}=Q(r_X)\varphi_2(X, I)(1_{Q(X)}\times_H\varphi_0)
\end{align*}
for all $X, Y, Z\in\setH$,
$Q: \setH\to\quivH$ is a tensor functor.
\begin{remark}
\label{rem:fullyfaithful}
The above tensor functor $Q: \setH\to\quivH$ is
fully faithful
\cite[Theorem 2.7]{matsumoto2}.
\end{remark}

Through this tensor functor $Q: \setH\to\quivH$,
every dynamical Yang-Baxter map
$\sigma: L\otimes L\to L\otimes L$
gives birth to a solution
\begin{equation}
\label{eq:quivthsol}
\tilde{\sigma}=\varphi_2(L, L)^{-1}Q(\sigma)\varphi_2(L, L):
Q(L)\times_HQ(L)\to Q(L)\times_HQ(L)
\end{equation}
to the braid relation \eqref{eq:braidrel}
in $\quivH$
\cite[Theorem 2.9]{matsumoto2}.

Let
$k: L\otimes X\to L\otimes X$
be a dynamical reflection map,
a solution to the reflection equation
\eqref{eq:RE} in $\setH$,
associated with the dynamical Yang-Baxter map
$\sigma: L\otimes L\to L\otimes L$.
The proof of
the following proposition
is straightforward.
\begin{proposition}\label{prop:quiverrefl}
The morphism
$\tilde{k}=\varphi_2(L, X)^{-1}Q(k)\varphi_2(L, X):
Q(L)\times_HQ(X)\to Q(L)\times_HQ(X)$
satisfies the following reflection equation 
in $\quivH$
associated with the solution $\tilde{\sigma}:
Q(L)\times_HQ(L)\to Q(L)\times_HQ(L)$
$\eqref{eq:quivthsol}$.
\begin{align}
&a_{Q(L)Q(L)Q(X)}^{-1}(1_{Q(L)}\times_H\tilde{k})a_{Q(L)Q(L)Q(X)}
(\tilde{\sigma}\times_H 1_{Q(X)})a_{Q(L)Q(L)Q(X)}^{-1}
\label{eq:quiverthRE}
\\
&\quad
(1_{Q(L)}\times_H\tilde{k})a_{Q(L)Q(L)Q(X)}(\tilde{\sigma}\times_H 1_{Q(X)})
a_{Q(L)Q(L)Q(X)}^{-1}
\notag
\\
=&
(\tilde{\sigma}\times_H 1_{Q(X)})a_{Q(L)Q(L)Q(X)}^{-1}
(1_{Q(L)}\times_H \tilde{k})a_{Q(L)Q(L)Q(X)}
(\tilde{\sigma}\times_H 1_{Q(X)})
\notag
\\
&\quad
a_{Q(L)Q(L)Q(X)}^{-1}
(1_{Q(L)}\times_H\tilde{k}).
\notag
\end{align}
\end{proposition}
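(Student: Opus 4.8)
The plan is to transport the reflection equation \eqref{eq:RE} from $\setH$ to $\quivH$ along the tensor functor $Q\colon\setH\to\quivH$ recalled above, whose tensor structure is the pair $(\varphi_0,\varphi_2)$. Throughout write $a=a_{Q(L)Q(L)Q(X)}$ and introduce the two isomorphisms
\[
\Phi_1=\varphi_2(L\otimes L,X)\circ(\varphi_2(L,L)\times_H 1_{Q(X)}),\qquad
\Phi_2=\varphi_2(L,L\otimes X)\circ(1_{Q(L)}\times_H\varphi_2(L,X)),
\]
with source $(Q(L)\times_HQ(L))\times_HQ(X)$ and $Q(L)\times_H(Q(L)\times_HQ(X))$ respectively.

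First I would record three ``building-block'' identities. Unwinding the definitions $\tilde k=\varphi_2(L,X)^{-1}Q(k)\varphi_2(L,X)$ and $\tilde\sigma=\varphi_2(L,L)^{-1}Q(\sigma)\varphi_2(L,L)$ and invoking the naturality of $\varphi_2$ at the pairs of morphisms $(1_L,k)$ and $(\sigma,1_X)$ (together with $Q(1_L)=1_{Q(L)}$ and $Q(1_X)=1_{Q(X)}$), one obtains
\[
1_{Q(L)}\times_H\tilde k=\Phi_2^{-1}\,Q(1_L\otimes k)\,\Phi_2,\qquad
\tilde\sigma\times_H 1_{Q(X)}=\Phi_1^{-1}\,Q(\sigma\otimes 1_X)\,\Phi_1 .
\]
Next, the compatibility of $\varphi_2$ with the associativity constraints (the first of the three displayed identities just before Remark~\ref{rem:fullyfaithful}), specialized so that its variables are $L,L,X$, says $\Phi_2\,a=Q(a_{LLX})\,\Phi_1$, whence
\[
a=\Phi_2^{-1}\,Q(a_{LLX})\,\Phi_1,\qquad a^{-1}=\Phi_1^{-1}\,Q(a_{LLX}^{-1})\,\Phi_2,
\]
since $Q$ is a functor and hence preserves inverses.

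With these identities in hand the rest is formal. I would substitute them into each side of \eqref{eq:quiverthRE}; because in the words on both sides the factors $a^{\pm1}$, $1_{Q(L)}\times_H\tilde k$ and $\tilde\sigma\times_H 1_{Q(X)}$ are arranged precisely so that the $\Phi_i^{-1}$ emitted by one factor meets the matching $\Phi_i$ of the next, and because $Q$ preserves composition, all the interior conjugating isomorphisms telescope. What remains is $\Phi_1^{-1}\,Q(\ell)\,\Phi_2$ on the left-hand side of \eqref{eq:quiverthRE} and $\Phi_1^{-1}\,Q(r)\,\Phi_2$ on the right-hand side, where $\ell$ and $r$ denote the left- and right-hand sides of \eqref{eq:RE} taken with $A=L$. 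Since $k$ is assumed to solve \eqref{eq:RE}, we have $\ell=r$, so applying $Q$ and conjugating by $\Phi_1^{-1}(\,\cdot\,)\Phi_2$ yields \eqref{eq:quiverthRE}.

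I do not expect any genuine obstacle here: the single structural ingredient is the $(\varphi_0,\varphi_2)$-coherence relation already displayed in the excerpt, and full faithfulness of $Q$ is not required for this implication (it would be needed only to deduce \eqref{eq:RE} back from \eqref{eq:quiverthRE}). The one place demanding attention is the index bookkeeping — tracking which instance of $\varphi_2$ (for the pair $(L,X)$, the pair $(L,L)$, or the triple $L,L,X$) occupies each slot, so that $\Phi_1$ and $\Phi_2$ cancel exactly as claimed — which is why the statement is qualified as ``straightforward'' rather than trivial.
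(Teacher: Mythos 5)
Your proposal is correct and is exactly the argument the paper has in mind: the paper omits the proof as ``straightforward,'' and the intended content is precisely the transport of \eqref{eq:RE} along the tensor functor $Q$ via the naturality of $\varphi_2$ and its compatibility with the associativity constraints, with the conjugating isomorphisms telescoping as you describe. Your bookkeeping of $\Phi_1$ and $\Phi_2$ (both sides of \eqref{eq:RE} run from $L\otimes(L\otimes X)$ to $(L\otimes L)\otimes X$, so both sides of \eqref{eq:quiverthRE} become $\Phi_1^{-1}Q(\cdot)\Phi_2$) checks out.
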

Therefore, our method in this paper 
can produce the above
$\tilde{k}: Q(L)\times_HQ(X)\to Q(L)\times_HQ(X)$
called quiver-theoretic solutions
to the reflection equation.
\begin{remark}
The reflection equation is the defining relation of
the Artin monoid
\cite{paris}
associated with the Coxeter matrix
$\begin{pmatrix}1&4\\4&1\end{pmatrix}$.
Hence, our results in this paper may be summarized by saying that
we can construct morphisms satisfying this defining relation
in the tensor categories $\setH$ and $\quivH$.
\end{remark}
\section*{Acknowledgments}
This work was partially supported by JSPS KAKENHI Grant Numbers
JP17K05187, JP23K03062,
and Hokkaido University DX Doctoral Fellowship Grant Number 
JPMJSP2119.

\end{document}